\newcommand{\Rmnum}[1]{\expandafter\@slowromancap\romannumeral #1@}
\let\Gamma\varGamma
\let\Delta\varDelta
\begin{document}
%%%% Titre
\keywords{Conjecture of Serrano, irregularity, Kodaira dimension, minimal model program, uniruled, weak Calabi-Yau variety.}
\subjclass{14E30, 14J30}

\title[Strictly nef]{Strictly nef divisors on singular threefolds}

%\title{On singular projective varieties with strictly nef anti-log canonical divisor: the case of log terminal singularities}
%\author{
%Jie Liu\thanks{Institute of Mathematics, Academy of Mathematics and Systems Science, Chinese Academy of Sciences, Beijing, 100190, China. E-mail: {\tt jliu@amss.ac.cn}},\; Wenhao Ou\thanks{Institute of Mathematics, Academy of Mathematics and Systems Science, Chinese Academy of Sciences, Beijing, 100190, China. E-mail: {\tt wenhaoou@amss.ac.cn}},\; Juanyong Wang\thanks{HCMS, Academy of Mathematics and Systems Science, Chinese Academy of Sciences, Beijing, 100190, China. E-mail: {\tt juanyong.wang@amss.ac.cn}},\; Xiaokui Yang\thanks{Department of Mathematics and Yau Mathematical Sciences Center, Tsinghua University, Beijing, 100084, China. E-mail: {\tt xkyang@mail.tsinghua.edu.cn}},\; Guolei Zhong\thanks{Department of Mathematics, National University of Singapore, 10 Lower Kent Ridge Road, Singapore 119076, Republic of Singapore. E-mail: {\tt zhongguolei@u.nus.edu}}
%}

\author{
Juanyong Wang
%\thanks{juanyong.wang@amss.ac.cn}
}
\address{
Juanyong Wang\\
HCMS, Academy of Mathematics and Systems Science, Chinese Academy of Sciences, Beijing, 100190, China.}
\email{juanyong.wang@amss.ac.cn}
%\affil{Institute of Mathematics, Academy of Mathematics and Systems Science, Chinese Academy of Sciences, Beijing, 100190, China}

\author{Guolei Zhong
}
\address{
Guolei Zhong\\
Department of Mathematics, National University of Singapore, 10 Lower Kent Ridge Road, Singapore 119076, Republic of Singapore.}
%\newline\textit{Current address}: IBS Center for Complex Geometry, Institute for Basic Science (IBS), B275, 55 EXPO-ro,  Yuseong-gu, Daejeon, 34126, South Korea.}
\email{zhongguolei@u.nus.edu}

\date{}

%\begin{document}
\maketitle
\paragraph{Abstract:} 
Let $X$ be a normal projective variety with only klt singularities, and $L_X$ a strictly nef $\mathbb{Q}$-divisor on $X$. 
In this paper, we study the singular version of Serrano's conjecture, i.e., the ampleness of $K_X+t L_X$ for sufficiently large $t\gg 1$.
We show that, if $X$ is assumed to be a $\mathbb{Q}$-factorial Gorenstein terminal threefold, then $K_X+tL_X$ is ample for  $t\gg 1$ unless $X$ is a weak Calabi-Yau variety  (i.e., the canonical divisor $K_X\sim_\mathbb{Q}0$ and the augmented irregularity $q^\circ(X)=0$) with $L_X\cdot c_2(X)=0$.

\tableofcontents

\section{Introduction}
\label{sec_intro}
%\addcontentsline{toc}{section}{Introduction}

In this paper, we work over the field  $\mathbb{C}$ of complex numbers.  
Recall that a $\QQ$-Cartier divisor $L$ over a projective variety $X$ is said to be \emph{strictly nef} (resp. \emph{nef}) if  $L\cdot C>0$ (resp. $L\cdot C\geqslant 0$) for every (complete) curve $C$ on $X$. 
It is clear that ample divisors are always strictly nef, but the inverse is in general not true; cf.~e.g. \cite[Example 10.6]{Har70}.  
In 1990s, Campana and Peternell conjectured that a smooth projective variety $X$ with strictly nef anti-canonical divisor $-K_X$ is ample (cf.~\cite[Problem 11.4]{CP91}), which leads people to learn how far a strictly nef divisor is from being ample. 
To the best knowledge of the authors, this conjecture has an affirmative answer when $\dim X\le 3$ (cf.~\cite{Mae93, Ser95, Ueh00}). 
We refer readers to a recent joint paper by Liu, Ou, Yang and the authors \cite{LOWYZ21} for the singular (pair) version of this conjecture; moreover, an affirmative answer to the threefold case has been given in \cite[Theorem D]{LOWYZ21} (cf.~\cite[Corollary 1.8]{HL20} for the surface case). 

To study the conjecture of Campana and Peternell, Serrano proposed a more general conjecture that a strictly nef divisor on a smooth projective variety is ample after a small turbulence of the canonical divisor (cf.~\cite[Question 0.1]{Ser95}).
Meanwhile, Serrano verified this conjecture for abelian varieties, Gorenstein surfaces and smooth threefolds with some possible exceptions (cf.~\cite[Proposition 1.4 and Theorems 2.3, 4.4]{Ser95}). 
One decade later, together with \cite{Ser95}, Campana, Chen and Peternell confirmed this  conjecture in dimension three with the only (possible) exception that $X$ is Calabi-Yau and $L_X\cdot c_2=0$ (cf.~\cite[Theorem 0.4]{CCP08}). 
We refer readers to \cite{LS20, LM21} for recent progress on the case of Calabi-Yau manifolds. 
In the spirit of Serrano's conjecture, we consider the (pair case of) singular varieties and recall the following question which was proposed in \cite[Question 1.4]{LOWYZ21}. 
This is the initial point of this paper. 
\begin{ques}[Singular Version of Serrano's Conjecture]\label{main-conj-singular-arbitrary}
Let $(X,\Delta)$ be a projective klt pair, and $L$ a strictly nef $\mathbb{Q}$-divisor on $X$.
Is $K_X+\Delta+tL$  ample for sufficiently large $t\gg 1$?
\end{ques}

Ample divisors being strictly nef, {\hyperref[main-conj-singular-arbitrary]{Question    \ref*{main-conj-singular-arbitrary}}} can also be regarded as a weak analogue of Fujita's conjecture.
As a warm up, the following proposition gives a positive answer to 
{\hyperref[main-conj-singular-arbitrary]{Question    \ref*{main-conj-singular-arbitrary}}} for the surface case  (cf.~\cite[Corollary 1.8]{HL20}):

\begin{prop}[{cf.~\cite[Corollary 1.8]{HL20}}]\label{main_thm_surface}
Let $(X,\Delta)$ be a projective klt surface pair.
Suppose that $L_X$ is a strictly nef Cartier divisor on $X$.
Then $K_X+\Delta+tL_X$ is ample for every real number $t>3$.
\end{prop}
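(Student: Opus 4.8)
The plan is to prove that $D_t:=K_X+\Delta+tL_X$ is ample for $t>3$ via the Nakai--Moishezon criterion, i.e.\ to establish $D_t^2>0$ together with $D_t\cdot C>0$ for every irreducible curve $C\subseteq X$. The ``curve'' condition comes essentially for free once one knows that $K_X+\Delta+3L_X$ is nef. To see the latter, apply the cone theorem for klt surface pairs: $\overline{NE}(X)=\overline{NE}(X)_{(K_X+\Delta)\geqslant 0}+\sum_i\mathbb{R}_{\geqslant 0}[C_i]$, where each $C_i$ is a rational curve with $0<-(K_X+\Delta)\cdot C_i\leqslant 3$ (this length bound in dimension two is exactly where the constant $3$ enters). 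Since $L_X$ is a strictly nef \emph{Cartier} divisor, $L_X\cdot C_i$ is a positive integer, so $(K_X+\Delta+3L_X)\cdot C_i\geqslant -3+3=0$; and $L_X$ being nef, $K_X+\Delta+3L_X$ is non-negative on $\overline{NE}(X)_{(K_X+\Delta)\geqslant 0}$. Hence $K_X+\Delta+3L_X$ is nef, and for $t>3$ we may write $D_t=(K_X+\Delta+3L_X)+(t-3)L_X$ as a sum of a nef class and a positive multiple of a strictly nef class; therefore $D_t\cdot C>0$ for every curve $C$, i.e.\ $D_t$ is strictly nef.

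It remains to rule out $D_t^2=0$. Nefness gives $D_t^2\geqslant 0$, and expanding $D_t^2=(K_X+\Delta+3L_X)^2+2(t-3)(K_X+\Delta+3L_X)\cdot L_X+(t-3)^2L_X^2$ into non-negative summands shows that $D_t^2=0$ forces $L_X^2=0$, $(K_X+\Delta+3L_X)\cdot L_X=0$ and $(K_X+\Delta+3L_X)^2=0$. As $L_X$ is strictly nef it is non-zero in $N^1(X)_{\mathbb{R}}$ and spans a ray on the null cone of the intersection form, so by the Hodge index theorem any class orthogonal to $L_X$ with vanishing self-intersection is proportional to $L_X$; thus $K_X+\Delta\equiv\lambda L_X$ for some $\lambda\in\mathbb{R}$, and $\lambda\geqslant -3$ because $K_X+\Delta+3L_X$ is nef. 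If $\lambda<0$, then $-(K_X+\Delta)\equiv(-\lambda)L_X$ is strictly nef, hence ample by the singular (pair) version of the Campana--Peternell conjecture in dimension two (cf.\ \cite[Corollary 1.8]{HL20}, \cite{LOWYZ21}), forcing $L_X$ to be ample --- contradicting $L_X^2=0$. If $\lambda>0$, then $K_X+\Delta\equiv\lambda L_X$ is strictly nef with $(K_X+\Delta)^2=0$; but by abundance for klt surface pairs $K_X+\Delta$ is semiample, and a non-zero semiample divisor of self-intersection zero vanishes along the fibres of its Iitaka fibration over a curve --- contradicting strict nefness. Hence $\lambda=0$, i.e.\ $K_X+\Delta\equiv 0$ and $D_t\equiv tL_X$ with $L_X^2=0$.

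The remaining configuration is where I expect the real difficulty to lie: on a klt surface pair $(X,\Delta)$ with $K_X+\Delta\equiv 0$ there should be no strictly nef $\mathbb{Q}$-Cartier divisor $L_X$ with $L_X^2=0$. I would handle this through the structure/classification of klt Calabi--Yau surface pairs: making $K_X+\Delta\sim_{\mathbb{Q}}0$ by abundance and passing to an appropriate quasi-\'etale cover, one is reduced to $X$ rational or ($X$ together with its log structure) built from an abelian, K3, or Enriques surface, and in each case a nef $\mathbb{Q}$-divisor of self-intersection $0$ is either proportional to the class of an elliptic curve or semiample defining an elliptic (or conic-bundle) fibration --- in both cases it meets some curve with multiplicity $0$, contradicting strict nefness; for the rational case a direct Riemann--Roch argument shows such an $L_X$ would be effective with every component orthogonal to it. Granting this last point (which is, together with the dimension-two Campana--Peternell statement, what \cite[Corollary 1.8]{HL20} supplies), we get $D_t^2>0$, and Nakai--Moishezon yields that $K_X+\Delta+tL_X$ is ample for all real $t>3$.
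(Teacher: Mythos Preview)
Your strict-nefness step is fine and matches the paper's Lemma~\ref{lem_strict_nef_k} (with the optimal bound $t>3$ in dimension two). The approach after that, however, is both different from the paper's and incomplete.

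The paper never attempts a case analysis on $\lambda$. Instead it passes to the minimal resolution $\pi:\widetilde X\to X$, observes that $\pi^*L_X$ is almost strictly nef, and invokes \cite[Theorem~26]{Cha20} to get that $K_{\widetilde X}+t\,\pi^*L_X$ is big for all $t>3$; pushing forward gives bigness of $K_X+tL_X$, hence of $K_X+\Delta+tL_X$, and then the base-point-free theorem plus strict nefness finishes. Chaudhuri's theorem on the smooth model is exactly what absorbs the ``$K_X+\Delta\equiv 0$, $L_X^2=0$'' difficulty that you isolate.

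Your argument has a circularity problem: in the $\lambda<0$ case you appeal to the pair version of Campana--Peternell in dimension two, citing \cite[Corollary~1.8]{HL20}; and in the $\lambda=0$ case you explicitly write ``Granting this last point (which is \dots\ what \cite[Corollary~1.8]{HL20} supplies)''. But \cite[Corollary~1.8]{HL20} \emph{is} the proposition you are proving --- the paper records this in the statement itself. So as written you are assuming the conclusion in both of the non-trivial cases. Independently of the circularity, the $\lambda=0$ sketch is not a proof: once a boundary $\Delta$ is present, the reduction ``to abelian/K3/Enriques/rational'' does not immediately yield that every nef class of square zero is semiample or hits a curve trivially; the rational log Calabi--Yau case in particular needs a genuine Riemann--Roch/effectivity argument that you have only gestured at. This is precisely the content that \cite{Cha20} (or \cite{HL20}) supplies and that your write-up does not.
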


Indeed, \cite[Corollary 1.8]{HL20} showed that {\hyperref[main_thm_surface]{Proposition  \ref*{main_thm_surface}}} is true even when the projective surface pair $(X,\Delta)$ is only assumed to be log canonical.   
We refer readers to \cite[Proposition 5.4]{LOWYZ21} (={\hyperref[prop_Q-Goren_surface]{Proposition \ref*{prop_Q-Goren_surface}}})   for a further extension to the non-normal case. 
In higher dimensions however, there are only some partial results on  {\hyperref[main-conj-singular-arbitrary]{Question \ref*{main-conj-singular-arbitrary}}}. 
Except for the case when $X$ is a smooth threefold with the boundary $\Delta=0$ (cf.~\cite[Theorem 0.4]{CCP08} and \cite[Theorem 4.4]{Ser95}), 
the joint paper \cite[Theorems A and B]{LP20A} also gives a partial answer to {\hyperref[main-conj-singular-arbitrary]{Question \ref*{main-conj-singular-arbitrary}}} when  $K_X+\Delta$ is pseudo-effective.

In this paper, we study {\hyperref[main-conj-singular-arbitrary]{Question \ref*{main-conj-singular-arbitrary}}} with the main result  {\hyperref[main_theorem_Goren_ter_3fold]{Theorem   \ref*{main_theorem_Goren_ter_3fold}}} below. 
This gives  an extension  of \cite[Theorem 0.4]{CCP08} to the singular (threefold) case in the way of running the minimal model program (MMP for short).

\begin{thm}\label{main_theorem_Goren_ter_3fold}
Let $X$ be a normal projective threefold with only klt singularities, 
and $L_X$ a strictly nef $\mathbb{Q}$-Cartier divisor on $X$.
Suppose that one of the following conditions holds.
\begin{enumerate}
\item[(1)] The Kodaira dimension $\kappa(X)\geqslant 1$ (cf.~{\hyperref[kappa>=1]{Theorem   \ref*{kappa>=1}}}). 
\item[(2)] The augmented irregularity $q^{\circ}(X)>0$, and either $X$ is $\mathbb{Q}$-factorial or $X$ has only canonical singularities (cf.~{\hyperref[coro_q0>0]{Corollary   \ref*{coro_q0>0}}}). 
\item[(3)] The Kodaira dimension $\kappa(X)=0$, $X$ has only terminal singularities, and $L_X\cdot c_2(X)\neq 0$ (cf.~{\hyperref[thm-singular-cy]{Theorem  \ref*{thm-singular-cy}}}). 
\item[(4)] $X$ is uniruled with only isolated $\mathbb{Q}$-factorial Gorenstein canonical singularities (cf.~{\hyperref[remark_conclude_excep_tocurve]{Remark   \ref*{remark_conclude_excep_tocurve}}} and {\hyperref[thm_3fold_surface_curve]{Theorem    \ref*{thm_3fold_surface_curve}}}).
\end{enumerate}
Then $K_X+tL_X$ is ample for sufficiently large $t\gg 1$.
\end{thm}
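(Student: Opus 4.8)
My plan is a case analysis along the minimal model program, the four hypotheses being arranged precisely to match the outcomes of running a $K_X$-MMP. The first, uniform, observation is that $K_X+tL_X$ is already \emph{nef} for $t\gg1$: by the cone theorem for klt $X$ the $K_X$-negative extremal rays of $\overline{\mathrm{NE}}(X)$ are spanned by rational curves $C$ with $0<-K_X\cdot C\le 6$, while $L_X\cdot C\ge 1/m$ for a fixed $m$ clearing denominators of $L_X$, so $(K_X+tL_X)\cdot C>0$ on every such ray for $t>6m$, and on $\overline{\mathrm{NE}}(X)_{K_X\ge0}$ one has $K_X+tL_X\ge tL_X\ge0$. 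Hence the real content is to upgrade nef to \emph{ample}, i.e.\ to verify the Nakai--Moishezon inequalities $(K_X+tL_X)^{\dim V}\cdot V>0$ for all positive-dimensional $V\subseteq X$; letting $t\to\infty$ and using strict nefness to make all mixed terms nonnegative, this amounts to showing $L_X$ is big and that $L_X|_S$ is big for every surface $S\subset X$. Strict nefness alone does not give this — Mumford's strictly nef non-big divisor on a ruled surface is the obstruction — and the extra input in each of (1)--(4) is exactly what excludes such a degeneration. The MMP is used throughout not to move to a different variety permanently, but to extract the structural information (an Iitaka fibration, an abelian quotient, a Riemann--Roch identity, a Mori fiber space) that then constrains the subvarieties on which $K_X+tL_X$ could fail to be positive.

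\textbf{The pseudo-effective cases (1) and (2).} If $K_X$ is pseudo-effective, a $K_X$-MMP terminates with a minimal model $\varphi\colon X\dashrightarrow X'$ on which $K_{X'}$ is nef, hence semi-ample by the abundance theorem in dimension three; the pushforward $L':=\varphi_*L_X$ stays strictly nef because curves in the contracted and flipped loci contribute effectively. In case (1), $\kappa(X)\ge1$, the Iitaka fibration $g\colon X'\to Y$ has $1\le\dim Y\le 3$ with $K_{X'}$ relatively trivial; one tests ampleness of $K_{X'}+tL'$ fiberwise — a general fiber is a point, a curve, or a surface with $K_F\sim_{\mathbb Q}0$, on which $L'$ restricts strictly nef and so is handled trivially, by the curve case, or by {\hyperref[main_thm_surface]{Proposition \ref*{main_thm_surface}}} — and then descends the positivity using that $K_{X'}$ is big in the horizontal directions. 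In case (2), $\kappa(X)=0$, so $K_{X'}\sim_{\mathbb Q}0$ and $X$ is a weak Calabi--Yau variety up to augmented irregularity; since $q^{\circ}(X)>0$, one passes to a finite étale cover whose Albanese morphism splits off a positive-dimensional abelian variety, pulls back $L'$ (still strictly nef), and reduces the dimension, the $\mathbb Q$-factorial-or-canonical hypothesis being exactly what keeps this argument inside the allowed category.

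\textbf{The main obstacle: $\kappa(X)=0$, $q^{\circ}(X)=0$ (case (3)).} Here $X'$ is a genuine weak Calabi--Yau threefold, terminal with $K_{X'}\sim_{\mathbb Q}0$, and one must show the strictly nef $L'$ is ample using $L'\cdot c_2(X')\ne0$. Riemann--Roch gives $\chi(X',mL')=\tfrac{1}{6}m^3(L')^3+\tfrac{1}{12}m\,L'\cdot c_2(X')+\chi(\mathcal O_{X'})$, while Miyaoka's generic semipositivity (valid since $K_{X'}\equiv0$) yields $L'\cdot c_2(X')\ge0$; thus $L'\cdot c_2(X')>0$, and Kawamata--Viehweg vanishing applied to $mL'=K_{X'}+mL'$ forces $h^0(X',mL')$ to grow, so $L'$ is big and $(L')^3>0$. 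It remains to exclude a surface $S\subset X'$ with $(L')^2\cdot S=0$: by adjunction $K_S=S|_S$ and $L'|_S$ is a strictly nef divisor on $S$ with $(L'|_S)^2=0$, which by surface theory forces $S$ into a very restricted, Mumford-type shape, and a further intersection estimate combining $K_{X'}\equiv0$ with $L'\cdot c_2(X')>0$ contradicts this. I expect this exclusion of a Mumford-type surface inside a weak Calabi--Yau threefold to be the crux of the whole argument — it is exactly the step carried out for smooth Calabi--Yau threefolds in \cite{CCP08} — and the precise reason the hypothesis $L_X\cdot c_2(X)\ne0$ cannot be dropped: without it one lands in the single residual case, a weak Calabi--Yau with $L_X\cdot c_2(X)=0$, where ampleness genuinely may fail.

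\textbf{The uniruled case (4).} If $K_X$ is not pseudo-effective then $X$ is uniruled, and a $K_X$-MMP terminates with a Mori fiber space $f\colon X'\to Z$ with $\dim Z\in\{0,1,2\}$, the isolated, $\mathbb Q$-factorial, Gorenstein, canonical hypothesis being what keeps the MMP steps and the resulting fibration under control. For $\dim Z=0$, $X'$ is a $\mathbb Q$-Fano threefold, so $\overline{\mathrm{NE}}(X')$ is a finite rational polyhedral cone and strict nefness of $L'$ does imply ampleness by Kleiman's criterion. For $\dim Z\ge1$, the fibers of $f$ are conics, del Pezzo surfaces, or rational curves, on which $L'$ restricts strictly nef and hence ample by the Fano case or {\hyperref[main_thm_surface]{Proposition \ref*{main_thm_surface}}}; one then combines this fiberwise positivity with the (known) analogue of Serrano's conjecture on the base $Z$, together with a further run of the MMP to make the horizontal part positive. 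In each branch the remaining bookkeeping is to descend the positivity obtained on the MMP output back to $X$ — using that $K_X+tL_X$ is already nef and that the exceptional loci, being covered by $L_X$-positive rational curves contracted by $K_X$-negative morphisms, are classified tightly enough that $L_X$ restricts with positive top self-intersection on them; this step, while routine in spirit, is where the singularity hypotheses of (4) are genuinely used.
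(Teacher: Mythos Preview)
Your outline has a structural gap that recurs in cases (1), (3), and (4): you assume that after running a $K_X$-MMP the pushforward $L':=\varphi_*L_X$ remains strictly nef. This is false in general. A divisorial contraction or a flip can destroy strict nefness of $L_X$: for a curve $C'$ on $X'$ with proper transform $C$ on $X$, one has $L'\cdot C'=L_X\cdot C+(\text{correction from the exceptional locus})$, and the correction can make this non-positive. The paper never makes this move. In case (1) it invokes \cite[Corollary D]{LP20A} directly on $X$ (no MMP), obtaining that $K_X+tL_X$ is numerically semi-ample, whence strict nefness forces ampleness. In case (3) no MMP is run either: since $\kappa(X)=0$ and $X$ is terminal, some $mK_X$ is effective; if the effective divisor is non-zero one applies {\hyperref[prop-klt-eff-ample]{Proposition \ref*{prop-klt-eff-ample}}}, and otherwise $K_X\sim_{\mathbb Q}0$ already on $X$. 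In case (4) the paper does \emph{not} push $L_X$ forward; it works instead with the auxiliary \emph{nef} divisor $D_X=L_X+\nu K_X$ (which \emph{is} a pullback from the first contraction by construction of $\nu$) and carries out a delicate induction on $\rho(X)$ spanning {\hyperref[lem_L.C>0]{Lemmas \ref*{lem_L.C>0}}}--{\hyperref[lem_bir-intersectC]{\ref*{lem_bir-intersectC}}}, including a non-projectivity argument via currents. Calling this descent ``routine in spirit'' misses essentially all of Section~\ref{sec_uniruled}.

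Your treatment of case (3) has a second, independent gap. Kawamata--Viehweg vanishing for $mL'$ requires $mL'-K_{X'}\sim_{\mathbb Q}mL'$ to be nef \emph{and big}, but bigness of $L'$ is exactly what you are trying to establish, so the step is circular; and even granting linear growth of $\chi(mL')$, you cannot conclude $h^0$ grows without controlling $h^2$. The paper's argument is entirely different: assuming $L_X$ is \emph{not} big (so $L_X^3=0$), Riemann--Roch gives $\chi(-2L_X)=-\tfrac{1}{6}L_X\cdot c_2(X)<0$, Serre duality plus {\hyperref[prop-klt-eff-ample]{Proposition \ref*{prop-klt-eff-ample}}} kill $h^0$ and $h^3$, hence $H^1(X,-2L_X)\neq 0$; this is fed into {\hyperref[lem-Ser-unitary]{Lemma \ref*{lem-Ser-unitary}}} to produce a linear representation of $\pi_1(X_{\mathrm{reg}})$ with non-abelian image, contradicting the structure of $\pi_1$ obtained via a maximally quasi-\'etale cover and \cite{GKP16b}. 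Finally, note that hypothesis (2) does \emph{not} assume $\kappa(X)=0$; it assumes only $q^\circ(X)>0$, and the paper proves it via Fourier--Mukai transforms and generic vanishing ({\hyperref[thm_alb]{Theorem \ref*{thm_alb}}}), not by a Beauville--Bogomolov-type splitting.
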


We recall here that, the \textit{augmented irregularity} $q^\circ(X)$ is defined as the maximum of the irregularities $q(\widetilde{X}):=\textup{h}^1(\widetilde{X},\mathcal{O}_{\widetilde{X}})$, where $\widetilde{X}\to X$ runs over the (finite) quasi-\'etale covers of $X$ (cf.~\cite[Definition 2.6]{NZ10}).

To prove {\hyperref[main_theorem_Goren_ter_3fold]{Theorem    \ref*{main_theorem_Goren_ter_3fold}}}, we follow the ideas and strategies in \cite{Ser95} and \cite{CCP08}, though in our situation, the appearance of singularities will make things more complicated. 
\begin{rmq}\label{rmk-pf-mainthm}
We give a few remarks on the proof of {\hyperref[main_theorem_Goren_ter_3fold]{Theorem    \ref*{main_theorem_Goren_ter_3fold}}}.
\begin{enumerate}
\item[(1)] {\hyperref[main_theorem_Goren_ter_3fold]{Theorem \ref*{main_theorem_Goren_ter_3fold}}} is obtained as the union of {\hyperref[kappa>=1]{Theorem   \ref*{kappa>=1}}},  
{\hyperref[coro_q0>0]{Corollary  \ref*{coro_q0>0}}},
{\hyperref[thm-singular-cy]{Theorem  \ref*{thm-singular-cy}}}, 
{\hyperref[remark_conclude_excep_tocurve]{Remark   \ref*{remark_conclude_excep_tocurve}}}  and {\hyperref[thm_3fold_surface_curve]{Theorem    \ref*{thm_3fold_surface_curve}}}.
\item[(2)] Considering {\hyperref[prop-klt-eff-ample]{Proposition  \ref*{prop-klt-eff-ample}}}, we can reduce the proof of {\hyperref[main_theorem_Goren_ter_3fold]{Theorem    \ref*{main_theorem_Goren_ter_3fold} (3)}} to the case when $K_X\equiv 0$; for otherwise, $K_X$ is numerically equivalent to a non-zero effective divisor, and hence $K_X+tL_X$ is ample for $t\gg 1$. 
Then it follows from the abundance that $K_X\sim_\mathbb{Q}0$, i.e., $mK_X\sim 0$ for some $m\in\mathbb{N}$ (cf.~\cite[Theorem 1.3]{Gon13} or \cite{Nak04}). 
\item[(3)] In {\hyperref[main_theorem_Goren_ter_3fold]{Theorem    \ref*{main_theorem_Goren_ter_3fold} (4)}}, the condition on singularities is  to avoid flips when we run the MMP; see  {\hyperref[lem_del14]{Lemma  \ref*{lem_del14}}}, {\hyperref[lem_canonical_terminal]{Lemma  \ref*{lem_canonical_terminal}}}, and {\hyperref[rem_composition_conic]{Remark  \ref*{rem_composition_conic}}}.  
\end{enumerate}
\end{rmq}

Terminal threefolds having isolated singularities (cf.~\cite[Corollary 5.18]{KM98}),  the following corollary follows from  {\hyperref[main_theorem_Goren_ter_3fold]{Theorem   \ref*{main_theorem_Goren_ter_3fold}}}  immediately  (cf.~{\hyperref[rmk-pf-mainthm]{Remark    \ref*{rmk-pf-mainthm} (2)}}).
Recall that, a  normal projective variety $X$ with only klt  singularities is said to be \textit{weak Calabi-Yau} if $K_X\sim_\mathbb{Q} 0$ and $q^\circ(X)=0$ (cf.~\cite[Definition 2.9]{NZ10}).

\begin{cor}[{cf.~\cite[Theorem 0.4]{CCP08}}]\label{main_Goren_ter_3fold}
Let $X$ be a $\mathbb{Q}$-factorial Gorestein terminal projective threefold, and $L_X$ a strictly nef $\mathbb{Q}$-divisor on $X$.
Then $K_X+tL_X$ is ample for sufficiently large $t\gg 1$ unless $X$ is weak Calabi-Yau and $L_X\cdot c_2(X)=0$ where $c_2(X)$ is the birational second Chern class on $X$ (cf.~\cite{SW94}).
\end{cor}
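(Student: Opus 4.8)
The plan is to obtain the corollary directly from {\hyperref[main_theorem_Goren_ter_3fold]{Theorem \ref*{main_theorem_Goren_ter_3fold}}} by a case analysis on the Kodaira dimension $\kappa(X)$ and the augmented irregularity $q^\circ(X)$. The preliminary observation is that a $\mathbb{Q}$-factorial Gorenstein terminal projective threefold $X$ has only isolated singularities (by \cite[Corollary 5.18]{KM98}), so it is in particular a $\mathbb{Q}$-factorial Gorenstein threefold with at worst isolated canonical (indeed terminal) singularities; hence $X$ satisfies all the singularity hypotheses occurring in parts (2), (3) and (4) of {\hyperref[main_theorem_Goren_ter_3fold]{Theorem \ref*{main_theorem_Goren_ter_3fold}}}, the birational second Chern class $c_2(X)$ (cf.~\cite{SW94}) is defined, and, $X$ being $\mathbb{Q}$-factorial, the strictly nef $\mathbb{Q}$-divisor $L_X$ is automatically $\mathbb{Q}$-Cartier.

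First I would dispose of the case $q^\circ(X)>0$: here {\hyperref[main_theorem_Goren_ter_3fold]{Theorem \ref*{main_theorem_Goren_ter_3fold} (2)}} (applicable since $X$ is $\mathbb{Q}$-factorial) already gives that $K_X+tL_X$ is ample for $t\gg 1$. So assume $q^\circ(X)=0$ from now on. If $\kappa(X)\geqslant 1$, we conclude by {\hyperref[main_theorem_Goren_ter_3fold]{Theorem \ref*{main_theorem_Goren_ter_3fold} (1)}}. If $\kappa(X)=-\infty$, then $K_X$ is not pseudo-effective (otherwise abundance in dimension three would give $\kappa(X)\geqslant 0$), so $X$ is uniruled; since its singularities are isolated, $\mathbb{Q}$-factorial, Gorenstein and canonical, {\hyperref[main_theorem_Goren_ter_3fold]{Theorem \ref*{main_theorem_Goren_ter_3fold} (4)}} yields the ampleness of $K_X+tL_X$ for $t\gg 1$.

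There remains the case $\kappa(X)=0$ (still with $q^\circ(X)=0$). If $L_X\cdot c_2(X)\neq 0$, then {\hyperref[main_theorem_Goren_ter_3fold]{Theorem \ref*{main_theorem_Goren_ter_3fold} (3)}} applies and $K_X+tL_X$ is ample for $t\gg 1$. If instead $L_X\cdot c_2(X)=0$, I would invoke {\hyperref[rmk-pf-mainthm]{Remark \ref*{rmk-pf-mainthm} (2)}}, which is based on {\hyperref[prop-klt-eff-ample]{Proposition \ref*{prop-klt-eff-ample}}} and abundance: either $K_X$ is numerically equivalent to a non-zero effective divisor, in which case $K_X+tL_X$ is ample for $t\gg 1$; or $K_X\sim_\mathbb{Q}0$, in which case $X$ is a weak Calabi-Yau threefold (using also $q^\circ(X)=0$) with $L_X\cdot c_2(X)=0$, which is exactly the excluded case, so nothing is claimed. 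Collecting everything, $K_X+tL_X$ is ample for $t\gg 1$ unless $X$ is weak Calabi-Yau with $L_X\cdot c_2(X)=0$.

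Since all the work is done inside {\hyperref[main_theorem_Goren_ter_3fold]{Theorem \ref*{main_theorem_Goren_ter_3fold}}}, the corollary carries no real obstacle; the only points needing attention are that a $\mathbb{Q}$-factorial Gorenstein terminal threefold does meet the isolated-singularity and canonicity requirements of parts (2)--(4), and that abundance in dimension three intervenes twice --- to deduce uniruledness from $\kappa(X)=-\infty$, and to promote $K_X\equiv 0$ to $K_X\sim_\mathbb{Q}0$ in the $\kappa(X)=0$ case so that the weak Calabi-Yau alternative is reached exactly.
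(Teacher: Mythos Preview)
Your proof is correct and follows the same approach the paper intends: the paper states only that the corollary is immediate from {\hyperref[main_theorem_Goren_ter_3fold]{Theorem \ref*{main_theorem_Goren_ter_3fold}}} via the isolated-singularity property of terminal threefolds and {\hyperref[rmk-pf-mainthm]{Remark \ref*{rmk-pf-mainthm} (2)}}, and your case analysis on $\kappa(X)$ and $q^\circ(X)$ is precisely the unpacking of that claim.
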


\paragraph{Acknowledgement}
The first author is grateful to Professor Xiaokui Yang for bringing to his attention the study of varieties strictly nef anticanonical divisors. He is partially supported by the by the National Natural Science Foundation of China. 
The second author would like to 
thank Professor De-Qi Zhang for inspiring discussions.
He would also like to thank Professor Vladimir Lazi\'c
for pointing out \cite[Corollary 1.8]{HL20} to him. 
He is  supported by President's Scholarships of NUS.

\section{Preliminary results}
\label{sec_pre}
Throughout this paper, we refer to \cite[Chapter 2]{KM98} for notations and terminologies on different kinds of singularities.
By a \textit{projective klt (resp. canonical) pair} $(X,\Delta)$, we mean that $X$ is a normal projective variety, $\Delta\geqslant 0$ is an effective $\mathbb{Q}$-divisor such that $K_X+\Delta$ is $\mathbb{Q}$-Cartier, and  $(X,\Delta)$ has only klt (resp. canonical) singularities.

In this section, let us prepare some preliminary results, which will be crucially used in the later sections. 
Some of the results have been proved in \cite[Section 5]{LOWYZ21}; however,  for the convenience of readers and us, we recall them here.  
As an application, we give an alternative proof of {\hyperref[main_thm_surface]{Proposition  \ref*{main_thm_surface}}}.

%To begin with, we recall the following theorem on the ampleness of the strictly nef anti-canonical divisor on a canonical threefold (cf.~\cite{Ser95}). 
%In our paper, we shall generalize it to the klt pair case. 
%\begin{thm}(\cite[Theorem 3.9]{Ueh00})\label{thm_canonical_k}
%Let $X$ be a canonical 3-fold with strictly nef anti-canonical divisor $-K_X$.
%Then $-K_X$ is ample.
%\end{thm}

\begin{lemme}[{cf.~\cite[Lemma 5.1]{LOWYZ21}, \cite[Lemma 1.1]{Ser95}}]\label{lem_strict_nef_k}
Let $(X,\Delta)$ be a projective klt pair, and $L_X$  a strictly nef $\mathbb{Q}$-divisor on $X$.
Then $(K_X+\Delta)+tL_X$ is  strictly nef for every $t>2m\dim X$ where $m$ is the Cartier index of $L_X$.
\end{lemme}

Note that, when $\dim X=2$, the conclusion of  {\hyperref[lem_strict_nef_k]{Lemma  \ref*{lem_strict_nef_k}}} holds with a more optimal lower bound $t>3$ (cf.~\cite[Proof of Lemma 5.1]{LOWYZ21} and \cite[Proposition 3.8]{Fuj12}).

\begin{lemme}[{cf.~\cite[Lemma 5.2]{LOWYZ21}}]\label{lem-big-ample}
Let $(X,\Delta)$ be a projective klt pair, 
and $L_X$  a strictly nef $\mathbb{Q}$-divisor on $X$.	
Suppose  $a(K_X+\Delta)+bL_X$ is big for some $a,b\geqslant 0$.
Then $(K_X+\Delta)+tL_X$ is ample for sufficiently large $t$. 
\end{lemme}

Let  $\overline{\textup{NE}}(X)$ (resp. $\overline{\textup{ME}}(X)$) be the \textit{Mori cone} (resp. \textit{movable cone}) of $X$ (cf.~\cite{BDPP13}).
The next lemma characterizes the situation when $K+tL$ is not big.
Note that, the first part of the lemma has been proved in \cite[Lemma 5.3]{LOWYZ21}.
For the organization, we include a complete proof here.
\begin{lemme}[{cf.~\cite[Lemma 5.3]{LOWYZ21}}]\label{lem-not-big-some}
Let $(X,\Delta)$ be a projective klt  pair of dimension $n$, 
and $L_X$  a strictly nef $\mathbb{Q}$-divisor on $X$.	
Then $(K_X+\Delta)+uL_X$ is not big for some rational number $u>2mn$ with $m$  the Cartier index of $L_X$, if and only if
$(K_X+\Delta)^i\cdot L_X^{n-i}=0$ for any $0\leqslant i\leqslant n$. 	
Moreover, if one of the above two equivalent conditions holds, then there exists a class $0\neq \alpha\in\overline{\textup{ME}}(X)$ such that 
$(K_X+\Delta)\cdot\alpha=L_X\cdot\alpha=0$. 
\end{lemme}

\begin{proof}
Suppose first that $(K_X+\Delta)+uL_X$ is not big for some rational number $u>2mn$. 
Set $u':=u-2mn>0$, $D_1:=(K_X+\Delta)+(\frac{u'}{2}+2mn)L_X$ and $D_2:=\frac{u'}{2}L_X$.	
Then $(K_X+\Delta+uL_X)^n=(D_1+D_2)^n=0$.
Since both $D_1$ and $D_2$ are nef (cf.~{\hyperref[lem_strict_nef_k]{Lemma  \ref*{lem_strict_nef_k}}}), the vanishing $D_1^i\cdot D_2^{n-i}=0$ for every $0\leqslant i\leqslant n$  yields $(K_X+\Delta)^i\cdot L_X^{n-i}=0$ for any $0\leqslant i\leqslant n$. 
Conversely, if $(K_X+\Delta)^i\cdot L_X^{n-i}=0$ for any $0\leqslant i\leqslant n$, then $((K_X+\Delta)+uL_X)^n=0$ for any $u$; in this case, $(K_X+\Delta)+uL_X$ being not big for any $u>2mn$ follows from \cite[Proposition 2.61, p.~68]{KM98} and {\hyperref[lem_strict_nef_k]{Lemma  \ref*{lem_strict_nef_k}}}. 
Therefore, the first part of our lemma is proved.

Assume that the nef divisor $K_X+\Delta+uL_X$ is not big for some $u>2mn$. 
Then, there exists a class $\alpha\in\overline{\textup{ME}}(X)$ such that $(K_X+\Delta+uL_X)\cdot\alpha=0$ (cf.~\cite[Theorem 2.2 and the remarks therein]{BDPP13}). 
%Indeed, we can choose $\alpha$ to be a rational class.
%Neither $K_X+\Delta+uL_X$ nor $-(K_X+\Delta+uL_X)$ being strictly positive  on $\overline{\textup{ME}}(X)$, there are rational points $P,Q\in\overline{\textup{ME}}(X)$ such that $(K_X+\Delta+uL_X)\cdot P\geqslant 0$ and $(K_X+\Delta+uL_X)\cdot Q\leqslant0$, noting that rational points are dense in $\overline{\textup{ME}}(X)$.
%If one of the inequalities is an equality, then we are done.
%Otherwise, we take $\mu:=-\frac{((K_X+\Delta+uL_X)\cdot P)}{((K_X+\Delta+uL_X)\cdot Q)}>0$ (a rational number) and $\alpha:=P+\mu Q\in\overline{\textup{ME}}(X)$, which is a rational class in $\overline{\textup{ME}}(X)$ satisfying $(K_X+\Delta+uL_X)\cdot\alpha=0$.
Suppose that $L_X\cdot\alpha\neq 0$.  
Then $L_X\cdot\alpha>0$ and thus $(K_X+\Delta)\cdot\alpha<0$.
By the cone theorem (cf.~\cite[Theorem 3.7, p.~76]{KM98}), we have 
$\alpha=M+\sum a_iC_i$ 
where $M$ is a class lying in $\overline{\textup{NE}}(X)_{(K_X+\Delta)\geqslant 0}$, $a_i>0$, and the $C_i$ are extremal rational curves with $0<-(K_X+\Delta)\cdot C_i\leqslant 2n$.
%Since $(K_X+\Delta)\cdot \alpha<0$, there exists a non-zero $i$, i.e., $k>0$.
Now, for each $i$, the intersection $(K_X+\Delta+uL_X)\cdot C_i>0$, a contradiction to the choice of $\alpha$.
So we have $(K_X+\Delta)\cdot \alpha=L_X\cdot\alpha=0$.
\end{proof}

Now we give an alternative proof of {\hyperref[main_thm_surface]{Proposition  \ref*{main_thm_surface}}} (cf.~\cite[Corollary 1.8]{HL20}) by applying a recent result on \textit{almost strictly nef} divisors on surfaces (cf.~\cite[Theorem 20]{Cha20} and \cite[Proposition 2.3]{CCP08}). 
%The proof depends on a recent progress on the bigness of   \textit{almost strictly nef} divisors on smooth projective varieties . %
Let us first recall the following definition. 
\begin{defn}[{cf.~\cite[Definition 1.1]{CCP08}}]\label{defn-almost-sn}
A $\mathbb{Q}$-Cartier divisor $L$ on a normal projective variety $X$ is called  \textit{almost strictly nef}, if there exist a birational morphism $\pi:X\to Y$ to a  normal projective variety $Y$ and a strictly nef $\mathbb{Q}$-divisor $L_Y$ on $Y$ such that $L=\pi^*L_Y$.  
\end{defn}
One of the motivation to introduce almost strictly nef divisors is to study the  non-uniruled case of {\hyperref[main-conj-singular-arbitrary]{Question \ref*{main-conj-singular-arbitrary}}}  via the descending of Iitaka fibrations, in which case, the pullback of a strictly nef divisor to a higher model which resolves the indeterminacy is almost strictly nef  (cf.~{\hyperref[thm_higher-dim-kappa>=1]{Proposition  \ref*{thm_higher-dim-kappa>=1}}} and \cite[Theorem 2.6]{CCP08}). 

\begin{proof}[\textup{\textbf{Proof of {\hyperref[main_thm_surface]{Proposition \ref*{main_thm_surface}}}}}]
Taking a minimal resolution $\pi:\widetilde{X}\to X$, we see that $L_{\widetilde{X}}:=\pi^*L_X$ is almost strictly nef. 
By \cite[Theorem 26]{Cha20},  $K_{\widetilde{X}}+tL_{\widetilde{X}}$ is big for all $t>3$.
Then its push-down $K_X+tL_X$	 is big (as a Weil divisor) for  all $t>3$ (cf.~\cite[Lemma 4.10 (i\!i\!i) and its proof]{FKL16}), i.e., $K_X+tL_X$ lies in the interior part of the closure of the cone generated by effective Weil divisors on $X$ for all $t>3$.  
So $K_X+\Delta+tL_X$ is big (as a Cartier divisor) for all $t>3$.
Since $L_X$ is a strictly nef Cartier divisor, by  {\hyperref[lem-big-ample]{Lemma  \ref*{lem-big-ample}}} and the remark after it, we see that for $t>3$,  $K_X+\Delta+tL_X$ is strictly nef and $K_X+\Delta+2tL_X=2(K_X+\Delta+tL_X)-(K_X+\Delta)$ is  nef and big.  
Applying the base-point-free theorem for $K_X+\Delta+tL_X$ (cf.~\cite[Theorem 3.3, p. 75]{KM98}), %(cf.~{\hyperref[lem_strict_nef_k]{Lemma  \ref*{lem_strict_nef_k}}}) for $t\gg 1$,
 our theorem follows from the strict nefness of $K_X+\Delta+tL_X$ for $t>3$ (cf. {\hyperref[lem-big-ample]{Lemma  \ref*{lem-big-ample}}} and the remark after it).   
%The second part  follows from \cite{Zhang06}.
\end{proof}

The following proposition was proved in \cite[Proposition 5.4]{LOWYZ21} which slightly generalizes {\hyperref[main_thm_surface]{Proposition \ref*{main_thm_surface}}}  to the (not necessarily normal)  $\mathbb{Q}$-Gorenstein surface case (cf.~\cite[Conjecture 1.3]{CCP08}).  
Recall that, for a $\mathbb{Q}$-factorial normal projective variety $X$ and a prime divisor $S\subseteq X$,  the \textit{canonical divisor} $K_S\in\textup{Pic}(S)\otimes\mathbb{Q}$ is defined by  
$K_S:=\frac{1}{m}(mK_X+mS)|_S$, where $m\in\mathbb{N}$ is the smallest positive integer such that both $mK_X$ and $mS$ are Cartier divisors on $X$.

\begin{prop}[{cf.~\cite[Proposition 5.4]{LOWYZ21}}]\label{prop_Q-Goren_surface}
Let $(X,\Delta)$ be a $\mathbb{Q}$-factorial klt threefold pair,  $S$ a prime divisor on $X$, and  $L_S$ a strictly nef  divisor (resp. almost strictly nef) on $S$.
Then $K_S+tL_S$ is ample (resp. big) for sufficiently large $t$.
\end{prop}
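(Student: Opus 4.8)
The statement to prove is {\hyperref[prop_Q-Goren_surface]{Proposition \ref*{prop_Q-Goren_surface}}}: for a $\mathbb{Q}$-factorial klt threefold pair $(X,\Delta)$, a prime divisor $S\subseteq X$, and $L_S$ strictly nef (resp. almost strictly nef) on $S$, the divisor $K_S+tL_S$ is ample (resp. big) for $t\gg 1$. The first thing I would do is reduce to the almost strictly nef case, since a strictly nef divisor is in particular almost strictly nef (take $\pi=\mathrm{id}$), and once bigness is known for all such divisors, strict nefness of $L_S$ upgrades bigness to ampleness via {\hyperref[lem-big-ample]{Lemma \ref*{lem-big-ample}}} applied to the pair obtained by adjunction on $S$ (together with a base-point-free argument as in the alternative proof of {\hyperref[main_thm_surface]{Proposition \ref*{main_thm_surface}}} above). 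So the real content is: $S$ is a prime divisor on a $\mathbb{Q}$-factorial klt threefold, hence a projective surface, and we must understand its singularities well enough to run the surface argument of {\hyperref[main_thm_surface]{Proposition \ref*{main_thm_surface}}} on it.

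**Key steps.**
First I would normalize: let $\nu\colon\overline{S}\to S$ be the normalization and pull $L_S$ back; I expect the surface $S$ cut out by a prime divisor on a $\mathbb{Q}$-factorial klt threefold to already be normal by a standard Bertini/$S_2$-type argument, or at worst one reduces to the normal case as in \cite[Proposition 5.4]{LOWYZ21}. Second, and this is the heart, I would invoke adjunction (e.g. \cite[5.46]{KM98} or inversion of adjunction) to control the singularities of $S$: there is an effective $\mathbb{Q}$-divisor $\Delta_S$ — the different — with $K_S+\Delta_S=(K_X+\Delta+S)|_S$ up to $\mathbb{Q}$-linear equivalence, and $(S,\Delta_S)$ is klt (in fact semi-log canonical / klt since $(X,\Delta+S)$ is dlt or at least we can arrange $(X,(1-\varepsilon)\Delta+S)$ to be plt near $S$). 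In particular $S$ has only klt singularities. Third, with $K_S=(K_S+\Delta_S)-\Delta_S$ and $\Delta_S\geqslant 0$ effective, it suffices to show $(K_S+\Delta_S)+tL_S$ is big for $t\gg1$, since then $K_S+tL_S=(K_S+\Delta_S+tL_S)-\Delta_S$ differs from a big class by an effective one and hence is still big (a big class plus an effective class is big). Fourth, apply the surface result: pass to the minimal resolution $\pi\colon\widetilde{S}\to\overline{S}$, observe $\pi^*L_S$ is almost strictly nef, use \cite[Theorem 26]{Cha20} to get $K_{\widetilde{S}}+t\,\pi^*L_S$ big for $t>3$, push down (as in the proof of {\hyperref[main_thm_surface]{Proposition \ref*{main_thm_surface}}}, via \cite[Lemma 4.10]{FKL16}) to deduce $K_{\overline{S}}+t\,\nu^*L_S$ is big, hence $K_S+tL_S$ is big on $S$, hence so is $K_S+\Delta_S+tL_S$. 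Finally, in the strictly nef case, {\hyperref[lem-big-ample]{Lemma \ref*{lem-big-ample}}} applied to the klt surface pair $(S,\Delta_S)$ (with $a(K_S+\Delta_S)+bL_S$ big) gives that $(K_S+\Delta_S)+tL_S$ is ample for $t\gg1$, and subtracting the effective $\Delta_S$ keeps it ample since strict nefness of $L_S$ lets us absorb $\Delta_S$ — more carefully, one runs the base-point-free argument verbatim as in {\hyperref[main_thm_surface]{Proposition \ref*{main_thm_surface}}} to conclude $K_S+tL_S$ itself is ample.

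**Main obstacle.**
The delicate point is the adjunction/singularity bookkeeping in step two and three: I need $(S,\Delta_S)$ to be a genuine klt pair so that the surface machinery (\cite[Theorem 26]{Cha20} requires at worst mild singularities, and {\hyperref[lem-big-ample]{Lemma \ref*{lem-big-ample}}} requires klt) applies, and I need to know that subtracting the different $\Delta_S$ does not destroy bigness or ampleness — the former is automatic, but the latter in the ampleness step requires care and is exactly where strict nefness of $L_S$ (not mere almost strict nefness) is used, mirroring the endgame of {\hyperref[main_thm_surface]{Proposition \ref*{main_thm_surface}}}. A secondary subtlety is that $L_S$ is not assumed Cartier here, only a $\mathbb{Q}$-divisor, so one must be slightly careful with indices when quoting \cite[Theorem 26]{Cha20} and when applying base-point-freeness; scaling $L_S$ to clear denominators handles this.
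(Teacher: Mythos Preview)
The paper does not supply its own proof here; the result is quoted from \cite[Proposition 5.4]{LOWYZ21}. The method is nonetheless visible later in the paper: in the proof of {\hyperref[lem_D.C>0]{Lemma \ref*{lem_D.C>0}}} the authors write $q\colon\widetilde{S}\to S$ for the composite of normalization and minimal resolution and use the sub-adjunction $K_{\widetilde{S}}\sim_{\mathbb{Q}} q^*K_S-F$ with $F\geqslant 0$ effective, citing \cite[Lemma 5-1-9]{KMM87} and \cite[(4.1)]{Sak84}. This inequality, rather than any klt structure on $S$, is the engine of the argument.

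Your proposal has a real gap at the adjunction step. You assert that $(S,\Delta_S)$ is klt, but that would force $(X,\Delta+S)$ to be plt near $S$, which is nowhere assumed; the paper in fact emphasises that $S$ is ``not necessarily normal'', and a klt surface is automatically normal, so the assertion cannot hold in general. Relatedly, your third step runs in the wrong direction: from $(K_S+\Delta_S)+tL_S$ big you conclude $K_S+tL_S=(K_S+\Delta_S+tL_S)-\Delta_S$ is big, but big minus effective is not big in general --- your parenthetical ``a big class plus an effective class is big'' is the correct principle applied backwards. The same reversal reappears in the ampleness endgame (ample minus effective need not be ample), and without the klt hypothesis on $(S,\Delta_S)$ you cannot invoke {\hyperref[lem-big-ample]{Lemma \ref*{lem-big-ample}}} on $S$ at all.

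The fix is to drop the adjunction/klt route and use the sub-adjunction inequality directly. From $q^*K_S=K_{\widetilde{S}}+F$ with $F\geqslant 0$ and \cite[Theorem 26]{Cha20} applied on the smooth surface $\widetilde{S}$, one gets $q^*(K_S+tL_S)=(K_{\widetilde{S}}+t\,q^*L_S)+F$ big --- now the effective correction is being \emph{added}, so the direction is right --- and hence $K_S+tL_S$ is big on $S$. For ampleness in the strictly nef case one must then argue directly with the $\mathbb{Q}$-Cartier divisor $(K_X+S)|_S$ on the possibly non-normal $S$, as in \cite[Proposition 5.4]{LOWYZ21}, rather than through a klt pair on $S$.
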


As an application, we  extend \cite[Proposition 3.1]{Ser95} to the following singular setting, 
which paves a possible way to consider {\hyperref[main-conj-singular-arbitrary]{Question \ref*{main-conj-singular-arbitrary}}} and prove the ampleness of $K+tL$ for $t\gg 1$.

\begin{prop}\label{prop-q-effective}
Let $X$ be a $\mathbb{Q}$-factorial normal projective threefold with only klt singularities,  and  $L_X$ a strictly nef $\mathbb{Q}$-Cartier divisor on $X$.
Suppose that there is a non-zero effective divisor numerically equivalent to $aL_X+bK_X$ for some  $a$ and $b$. %(this is the case when $aL_X+bK_X\equiv \pi^*H$, where $\pi:X\to Y$ is a surjective morphism onto a normal projective variety $Y$, $\dim Y>0$, and $H$ is a non-zero effective divisor on $Y$).
Then $K_X+tL_X$ is ample for  $t\gg 1$.
\end{prop}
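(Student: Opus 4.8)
The plan is to separate the divisor $D:=aL_X+bK_X$ into cases according to the signs of $a$ and $b$, and in each case to produce a big divisor of the shape $\alpha(K_X+tL_X)+\beta L_X$ with $\alpha,\beta\ge 0$, at which point {\hyperref[lem-big-ample]{Lemma \ref*{lem-big-ample}}} finishes the argument. First I would dispose of the trivial cases: if $b=0$ then $aL_X$ is (numerically) a non-zero effective divisor, so $a>0$ and $L_X$ itself is big; since $L_X$ is strictly nef, $K_X+tL_X$ is big for $t\gg 1$ (e.g.\ add a small multiple of the big divisor $L_X$ to the strictly nef divisor $K_X+tL_X$ from {\hyperref[lem_strict_nef_k]{Lemma \ref*{lem_strict_nef_k}}}), and {\hyperref[lem-big-ample]{Lemma \ref*{lem-big-ample}}} applies. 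If $b>0$, then $bK_X+aL_X$ effective and non-zero gives, upon adding $(tb-a)L_X$ for $t$ large (using strict nefness of $L_X$ to keep things at least big), a big divisor numerically equivalent to $b(K_X+tL_X)$; again conclude by {\hyperref[lem-big-ample]{Lemma \ref*{lem-big-ample}}}.

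The substantive case is $b<0$, i.e.\ (rescaling) a non-zero effective divisor $E\equiv aL_X-K_X$ for some $a$. Here I would argue by contradiction: if $K_X+tL_X$ is never big, then by {\hyperref[lem-not-big-some]{Lemma \ref*{lem-not-big-some}}} we get $(K_X)^i\cdot L_X^{3-i}=0$ for all $i$, and there is a class $0\ne\gamma\in\overline{\textup{ME}}(X)$ with $K_X\cdot\gamma=L_X\cdot\gamma=0$. Intersecting $E\equiv aL_X-K_X$ with $L_X^2$ gives $E\cdot L_X^2=0$, and since $E$ is effective and $L_X$ strictly nef, this forces $\dim\pi(E)\le ?$—more precisely every component $S$ of $E$ satisfies $L_X|_S$ not big, so $L_X|_S$ is a strictly nef but non-big divisor on the surface $S$. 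Now I would invoke {\hyperref[prop_Q-Goren_surface]{Proposition \ref*{prop_Q-Goren_surface}}}: on each component $S$ of $E$, $K_S+tL_X|_S$ is ample (hence big) for $t\gg 1$, so in particular $K_S$ must fail to be "too negative" compared with $L_X|_S$; combined with adjunction $K_S=(K_X+S)|_S$ and the numerical triviality $K_X\cdot L_X^2=0$, one extracts a contradiction with $L_X|_S$ being non-big. Threading these intersection-theoretic constraints through adjunction on the (possibly several) components of $E$, using that $X$ is $\mathbb{Q}$-factorial so that $K_S$ makes sense as in the paragraph preceding {\hyperref[prop_Q-Goren_surface]{Proposition \ref*{prop_Q-Goren_surface}}}, is the technical heart.

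The main obstacle I anticipate is precisely this last step: controlling $K_X|_S$ and $S|_S$ separately when only the sum $K_X+S$ restricts nicely via adjunction, especially when $E$ is reducible or non-reduced so that the components interact through their mutual intersections. A clean way around it may be to avoid component-by-component bookkeeping and instead argue globally: from $E\equiv aL_X-K_X$ with $E$ effective non-zero, write $K_X+tL_X\equiv (t-a)L_X+E$; for $t>a$ this is the sum of a strictly nef $\mathbb{Q}$-divisor and a non-zero effective divisor, so it is big iff $(t-a)L_X$ restricted to the support of $E$ is big on at least one component. If none of the $L_X|_{S_i}$ is big, apply {\hyperref[prop_Q-Goren_surface]{Proposition \ref*{prop_Q-Goren_surface}}} to get $K_{S_i}+tL_X|_{S_i}$ big, push forward, and use $K_X\sim_{\mathbb Q}$-equivalences coming from $K_X^i\cdot L_X^{3-i}=0$ together with the nef class $\gamma$ to contradict {\hyperref[lem-not-big-some]{Lemma \ref*{lem-not-big-some}}}. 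Once bigness of some $K_X+t_0L_X$ is secured, {\hyperref[lem-big-ample]{Lemma \ref*{lem-big-ample}}} upgrades it to ampleness for all $t\gg 1$, completing the proof.
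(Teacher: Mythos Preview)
Your case split on the sign of $b$ rests on an unjustified step: in both the $b=0$ and $b>0$ cases you implicitly use that a non-zero effective divisor plus a strictly nef divisor is big, or that a strictly nef divisor numerically equivalent to an effective divisor is big. Neither is true in general (strict nefness gives no positive lower bound on volume), so those cases are not actually ``easy'' and the split buys you nothing. The paper's argument treats all $a,b$ uniformly.

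In the substantive part you are on the right track but stop short of the computation that closes it. After assuming $K_X+tL_X$ is not big and deducing $K_X^i\cdot L_X^{3-i}=0$ via Lemma~\ref{lem-not-big-some}, write the effective divisor as $\sum n_iF_i\equiv aL_X+bK_X$. Since $K_X+tL_X$ is nef and $(K_X+tL_X)^2\cdot(aL_X+bK_X)=0$, each term $(K_X+tL_X)^2\cdot F_i$ vanishes, which for varying $t$ gives $F_i\cdot L_X^2=F_i\cdot K_X\cdot L_X=F_i\cdot K_X^2=0$. Now the key is a single adjunction identity: by Proposition~\ref{prop_Q-Goren_surface}, $K_{F_1}+rL_X|_{F_1}$ is ample, so
\[
0<L_X|_{F_1}\cdot(K_{F_1}+rL_X|_{F_1})=F_1\cdot L_X\cdot(K_X+F_1+rL_X)=F_1^2\cdot L_X,
\]
the last equality using the vanishings just obtained. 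On the other hand $F_1\cdot(aL_X+bK_X)\cdot L_X=0$, while computing it as $\sum n_i\,F_1\cdot F_i\cdot L_X$ and using nefness of $L_X$ on the effective $1$-cycles $F_1\cap F_i$ ($i\neq 1$) gives $\ge n_1F_1^2\cdot L_X>0$. This is the contradiction; there is no need to produce a big divisor directly, and the movable class $\gamma$ from Lemma~\ref{lem-not-big-some} is not used here.
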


\begin{proof}
Suppose  the contrary that $K_X+tL_X$ is not ample for some (and hence all) $t\gg 1$. 
By {\hyperref[lem_strict_nef_k]{Lemma  \ref*{lem_strict_nef_k}}} and {\hyperref[lem-big-ample]{Lemma  \ref*{lem-big-ample}}},  $K_X+tL_X$ is not big (but nef) for some  $t>6m$ with $m$ the Cartier index of $L_X$. 
Then {\hyperref[lem-not-big-some]{Lemma  \ref*{lem-not-big-some}}} gives us that
$$K_X^3=K_X^2\cdot L_X=K_X\cdot L_X^2=L_X^3=0.$$
Pick an effective divisor $\sum n_iF_i\equiv aL_X+bK_X$.
Then  $(K_X+tL_X)^2\cdot (aL_X+bK_X)=0$.
Since $K_X+tL_X$ is nef for all $t\gg 1$,  
we have  $F_i\cdot L_X^2=F_i\cdot K_X\cdot L_X=F_i\cdot K_X^2=0$ for each prime component $F_i$. 
By {\hyperref[prop_Q-Goren_surface]{Proposition  \ref*{prop_Q-Goren_surface}}}, $K_{F_1}+rL_X|_{F_1}$ is ample for $r\gg 1$.  
Therefore, we have
$$0<L_X|_{F_1}\cdot (K_{F_1}+rL_X|_{F_1})=F_1\cdot L_X\cdot (K_X+F_1+rL_X)=F_1^2\cdot L_X.$$
Now that $F_i$ and $F_j$ are distinct prime divisors, we deduce the following
$$0=F_1\cdot (aL_X+bK_X)\cdot L_X=F_1\cdot\sum  n_iF_i\cdot L_X\geqslant n_1F_1^2L_X>0,$$
a contradiction. 
So $K_X+tL_X$ is ample for  $t\gg 1$ and our proposition is thus proved.
\end{proof}

In what follows, we extend  {\hyperref[prop-q-effective]{Proposition  \ref*{prop-q-effective}}} to the  non-$\mathbb{Q}$-factorial cases: either (i) $L_X$ is num-effective, i.e., numerically equivalent to a non-zero effective divisor (which has been proved in \cite[Proposition 5.5]{LOWYZ21}), or (ii) $\Delta=0$ and $X$ has only canonical singularities (cf.~{\hyperref[prop-klt-eff-ample]{Proposition \ref*{prop-klt-eff-ample}}}). 
However, we are still not able to deal with the general non-$\mathbb{Q}$-factorial case without assuming any further conditions.

\begin{prop}[{cf.~\cite[Proposition 5.5]{LOWYZ21}}]\label{prop-klt-eff-ample}
Let $(X,\Delta)$ be a projective klt threefold pair.
Let $L_X$ be a strictly nef $\mathbb{Q}$-divisor on $X$.
Suppose that there is a non-zero effective divisor which is numerically equivalent to $aL_X+b(K_X+\Delta)$.
Suppose further that either (i) $b=0$, or (ii) $\Delta=0$ and $X$ has only canonical singularities. 
Then $K_X+\Delta+tL_X$ is ample for sufficiently large $t$. 
\end{prop}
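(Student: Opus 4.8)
**Proof proposal for Proposition \ref*{prop-klt-eff-ample}.**

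The plan is to split into the two cases of the hypothesis. In case (i), where $b=0$, the divisor $L_X$ itself is num-effective, and I would reduce to the $\mathbb{Q}$-factorial situation already handled in Proposition \ref*{prop-q-effective} by passing to a small $\mathbb{Q}$-factorialization $f\colon X'\to X$ (which exists for klt pairs). On $X'$ the pullback $f^*L_X$ is strictly nef along every curve not contracted by $f$, but may be only nef (degree zero) on $f$-exceptional curves; here one must be slightly careful, but since $f$ is small and $X$ is already klt, $f^*L_X$ remains strictly nef (any curve contracted by a small morphism would force $f^*L_X$ to be trivial on it, yet such curves, being contracted, contribute no new constraint—alternatively perturb by a small $f$-ample correction). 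The effective divisor numerically equivalent to $L_X$ pulls back to an effective divisor numerically equivalent to $f^*L_X$, so Proposition \ref*{prop-q-effective} gives ampleness of $K_{X'}+t f^*L_X$ for $t\gg 1$; pushing forward via the projection formula and using that $f$ is small, one recovers bigness of $K_X+\Delta+tL_X$, and then Lemma \ref*{lem-big-ample} together with the base-point-free theorem (as in the proof of Proposition \ref*{main_thm_surface}) upgrades bigness plus strict nefness to ampleness.

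In case (ii), where $\Delta=0$ and $X$ has only canonical singularities, I would argue by contradiction as in Proposition \ref*{prop-q-effective}. Supposing $K_X+tL_X$ is not ample for $t\gg 1$, Lemmas \ref*{lem_strict_nef_k} and \ref*{lem-big-ample} force $K_X+tL_X$ to be nef but not big for some $t>6m$, whence Lemma \ref*{lem-not-big-some} yields $K_X^3=K_X^2\cdot L_X=K_X\cdot L_X^2=L_X^3=0$. Writing $\sum n_iF_i\equiv aL_X+bK_X$ and intersecting with the nef class $(K_X+tL_X)^2$ gives $F_i\cdot L_X^2=F_i\cdot K_X\cdot L_X=F_i\cdot K_X^2=0$ for every prime component $F_i$. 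The only place the $\mathbb{Q}$-factoriality of $X$ was used in Proposition \ref*{prop-q-effective} is to make sense of $K_{F_1}$ and to invoke Proposition \ref*{prop_Q-Goren_surface}; under canonical singularities one instead takes a small $\mathbb{Q}$-factorialization $f\colon X'\to X$, lets $S$ be the strict transform of $F_1$, and applies Proposition \ref*{prop_Q-Goren_surface} on $X'$ to the strictly nef (or almost strictly nef) divisor $f^*L_X|_S$, obtaining ampleness of $K_S+r f^*L_X|_S$ for $r\gg 1$. Since $f$ is small, intersection numbers of $L_X$-restricted classes on $S$ agree with those on $F_1$, so exactly as before
$$0<L_X|_{F_1}\cdot(K_{F_1}+rL_X|_{F_1})=F_1^2\cdot L_X,$$
and then $0=F_1\cdot(aL_X+bK_X)\cdot L_X=F_1\cdot\sum n_iF_i\cdot L_X\geqslant n_1F_1^2\cdot L_X>0$, a contradiction.

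I expect the main obstacle to be the passage through the small $\mathbb{Q}$-factorialization: one must verify that strict nefness of $L_X$ is preserved by pullback along a small birational morphism (this is where the absence of flipping curves matters, and where the canonical—rather than merely klt—hypothesis in case (ii) might be invoked to control the exceptional locus), and that bigness descends correctly under push-forward of Weil divisors, for which one cites \cite[Lemma 4.10]{FKL16} as in the surface case. The rest is a bookkeeping reduction to the already-established Propositions \ref*{prop-q-effective} and \ref*{prop_Q-Goren_surface}.
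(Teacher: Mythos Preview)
Your overall plan---pass to a small $\mathbb{Q}$-factorialization and mimic Proposition~\ref*{prop-q-effective}---is the paper's strategy too, but your execution has a genuine gap in each case.

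\textbf{Case (i).} You propose to apply Proposition~\ref*{prop-q-effective} on $X'$ to $f^*L_X$. This fails: a small $\mathbb{Q}$-factorialization contracts curves (that is the point when $X$ is not $\mathbb{Q}$-factorial), and $f^*L_X$ has degree zero on every such curve, so $f^*L_X$ is \emph{not} strictly nef on $X'$. Your ``perturb by an $f$-ample correction'' does not fix this, since any such perturbation destroys the numerical relation to the effective divisor. The paper simply cites \cite[Proposition~5.5]{LOWYZ21} for case~(i); if you want a self-contained argument, you must run the whole computation on $X'$ as in case~(ii), using only nefness of $L_Y:=f^*L_X$.

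\textbf{Case (ii).} You correctly pass to the $\mathbb{Q}$-factorialization $X'$, but then write the contradiction back on $X$:
\[
0<L_X|_{F_1}\cdot(K_{F_1}+rL_X|_{F_1})=F_1^2\cdot L_X,\qquad 0=F_1\cdot\textstyle\sum n_iF_i\cdot L_X\geqslant n_1F_1^2\cdot L_X.
\]
None of $K_{F_1}$, $F_1^2\cdot L_X$, or $F_1\cdot F_i\cdot L_X$ is defined unless the $F_i$ are $\mathbb{Q}$-Cartier, which is exactly what fails when $X$ is not $\mathbb{Q}$-factorial. The paper carries out the entire computation on $Y$ (your $X'$): one shows $(L_Y|_{F_i})^2=L_Y^2\cdot F_i=0$ from nefness of $L_Y$, gets $K_{F_i}+tL_Y|_{F_i}$ \emph{big} (not ample---Proposition~\ref*{prop_Q-Goren_surface} gives only bigness in the almost strictly nef case), and then invokes the Hodge index theorem to obtain $L_Y|_{F_i}\cdot(K_{F_i}+tL_Y|_{F_i})>0$. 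From $(K_Y+tL_Y)^2\cdot F_i=0$ one extracts $L_Y\cdot K_Y\cdot F_i=0$, hence $L_Y\cdot F_i^2>0$, and the contradiction is $0=L_Y\cdot(aL_Y+bK_Y)\cdot F_1=L_Y\cdot\sum n_iF_i\cdot F_1\geqslant n_1L_Y\cdot F_1^2>0$, all on $Y$. You also need $K_Y=\pi^*K_X$ (crepancy) so that $aL_Y+bK_Y$ is the pullback of the num-effective class; this is automatic for a small morphism once $K_X$ is $\mathbb{Q}$-Cartier, but you should say so.
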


\begin{proof}
Suppose the contrary that $K_X+\Delta+tL_X$ is not ample for one (and hence any) $t\gg 1$.
By {\hyperref[lem-big-ample]{Lemma \ref*{lem-big-ample}}} and {\hyperref[lem-not-big-some]{Lemma \ref*{lem-not-big-some}}} we have 
\begin{equation}\label{eq_int-nb=0}
(K_X+\Delta)^3=(K_X+\Delta)\cdot L_X^2=(K_X+\Delta)\cdot L_X=L_X^3=0.
\end{equation}
Let us take a $\mathbb{Q}$-factorialization $\pi:(Y,\Gamma)\to (X,\Delta)$, which is a small birational morphism (cf.~\cite[Corollary 1.37, pp.~29-30]{Kollar13}) and let $L_Y:=\pi^*L_X$ which is  nef but not big. 
Then $L_Y^3=0$.

Under the assumption  (i), i.e., $L_X$ is numerically equivalent to a non-zero effective divisor, our proposition  follows from \cite[Proposition 5.5]{LOWYZ21}. 
Therefore, we assume the condition (ii), i.e., $X$ has only canonical singularities (and thus $K_X$ is $\mathbb{Q}$-Cartier) and $aL_X+bK_X$ is numerically equivalent to a non-zero effective divisor.
In this case, it follows from \cite{Kaw88} that we can take $\pi$ to be crepant, i.e., $K_Y=\pi^*K_X$.
Then $aL_Y+bK_Y$ is also numerically equivalent to a non-zero effective divisor $\sum n_iF_i$.
By \eqref{eq_int-nb=0} and the projection formula, we have $L_Y^i\cdot K_Y^{3-i}=L_X^i\cdot K_X^{3-i}=0$ for every $i$; hence by  {\hyperref[prop_Q-Goren_surface]{Proposition \ref*{prop_Q-Goren_surface}}}, we see that  $K_{F_i}+tL_Y|_{F_i}$ is big for $t\gg 1$. 
Since $L_Y^3=0$, we have $(L_Y|_{F_i})^2=L_Y^2\cdot F_i=0$ for every $i$ due to the nefness of $L_Y$.
By the Hodge index theorem, 
$$0<L_Y|_{F_i}\cdot (K_{F_i}+tL_Y|_{F_i})=L_Y\cdot (K_Y+F_i)\cdot F_i.$$
Since $(K_Y+tL_Y)^2\cdot (aL_Y+bK_Y)=0$ and $K_Y+tL_Y=\pi^*(K_X+tL_X)$ is nef for $t\gg 1$, we have $(K_Y+tL_Y)^2\cdot F_i=0$ for any $t\gg 1$.
This further implies that $L_Y\cdot K_Y\cdot F_i=0$, and therefore $L_Y\cdot F_i^2>0$ for every $i$ by the above inequality.
But then, we get again the following contradiction:
$$0=aL_Y^2\cdot F_1=L_Y\cdot(aL_Y+bK_Y)\cdot F_1=L_Y\cdot\sum n_iF_i\cdot F_1\geqslant L_Y\cdot F_1^2>0.$$
Hence, our proposition is proved under the condition (ii).
\end{proof}

To end up this section, we extend a formula on conic bundles to the singular case.
Recall that a \textit{contraction} of a  normal projective variety $X$ is a surjective morphism with connected fibres.
A contraction $\pi:X\to S$ is said to be \textit{elementary} if the relative Picard number $\rho(X/S)=1$.
Recall also that a \textit{conic bundle} is a flat morphism between nonsingular varieties such that the generic fibre is an irreducible rational curve. 
%We refer the readers to  for a further extension  when $\rho(X/S)\geqslant 2$.

\begin{lemme}[{cf.~\cite[4.11]{Miy81}, {\hyperref[rem_composition_conic]{Remark  \ref*{rem_composition_conic}}}}]\label{lem_conic_miy}
Let $X$ be a normal projective threefold with at worst isolated klt singularities.
Suppose that $\pi:X\to S$ is an elementary  $K_X$-negative contraction  onto a normal projective surface $S$.
Denote by $D_1$ the one-dimensional part of the discriminant locus of $\pi$ (over which, $\pi$ is not smooth).
Then $\pi_*K_X^2\equiv -(4K_S+D_1)$.
\end{lemme}

\begin{proof}
Since $\rho(X/S)=1$, by the  canonical bundle formula (cf.\,e.g.~\cite[Theorem 0.2]{Amb05}), there exists some  $\Delta_S\geqslant 0$ on $S$ such that $(S,\Delta_S)$ is klt and thus $S$ is $\mathbb{Q}$-factorial (cf.~e.g.~\cite[Proposition 4.11]{KM98}), and
%has only rational singularities (cf.~\cite[Theorem 5.22]{KM98}).
%Then, $S$ is $\mathbb{Q}$-factorial (cf.~\cite[Proposition 17.1]{Lip69}), and 
we only need to check the equality  on very ample curves  on $S$. % by Bertini's theorem.
%Now we note that if E is very ample on X then almost all curves E' in the complete linear system |E| are smooth by embedding X into some Pn using E and then applying Bertini's Theorem (Hartshorne, Thm. II.8.18). 
%We include the proof here for readers' convenience. 
%First, our $S$ is $\mathbb{Q}$-factorial (cf.~\cite[Proposition 17.1]{Lip69}).
By the cone theorem (cf.~\cite[Theorem 3.7]{KM98}), $\pi$ is equi-dimensional.
By \cite[Theorem 5.10]{KM98}, $X$ is Cohen-Macaulay.
Let $D_0:=(\textup{Sing}~S)\cup\pi(\textup{Sing}~X)$.
Then, $\pi|_{X\backslash\pi^{-1}(D_0)}$ is flat %(cf.~\cite[Theorem 23.1 and its Corollary]{Mat89}) 
and thus $\pi|_{X\backslash\pi^{-1}(D_0)}$ is a usual conic bundle.  
Let $T$ be a very ample curve on $S$ which can be assumed to be smooth, avoid $D_0$, and intersect with $D_1$ transversally. 
Let $F:=\pi^{-1}(T)=\pi^*(T)$, which is smooth.
Then 
$$K_X^2\cdot F=(K_X|_F)^2=(K_F-F|_F)^2=K_F^2-2K_F\cdot (F|_F)=K_F^2+4T^2.$$ 
Note that the last equality is due to the adjunction.
Since $F$ is a (not necessarily minimal) ruled surface over $T$ with $D_1
\cdot T$ degenerate  (reducible) fibres, we have
$$K_F^2=-4(K_S\cdot T+T^2)-D_1\cdot T.$$ 
As a result, we have $\pi_*(K_X^2)\cdot T=K_X^2\cdot F=-(4K_S+D_1)\cdot T$.
So our lemma is proved.
\end{proof}

\section{Kodaira fibrations,  Proof of \texorpdfstring{{\hyperref[main_theorem_Goren_ter_3fold]{Theorem \ref*{main_theorem_Goren_ter_3fold} (1)}}}{text}}

In this section, we study the case when $X$ has positive Kodaira dimension with the main results {\hyperref[kappa>=1]{Theorem \ref*{kappa>=1}}} and 
{\hyperref[thm_higher-dim-kappa>=1]{Proposition  \ref*{thm_higher-dim-kappa>=1}}}. 

First, {\hyperref[kappa>=1]{Theorem \ref*{kappa>=1}}} below is a more general (pair) version of {\hyperref[main_theorem_Goren_ter_3fold]{Theorem   \ref*{main_theorem_Goren_ter_3fold} (1)}}.

\begin{thm}[{cf.~{\hyperref[main_theorem_Goren_ter_3fold]{Theorem   \ref*{main_theorem_Goren_ter_3fold} (1)}}}]\label{kappa>=1}
Let $(X,\Delta)$ be a projective klt threefold pair, and $L_X$ a strictly nef $\mathbb{Q}$-Cartier divisor on $X$.
Suppose that the log Kodaira dimension $\kappa(X,K_X+\Delta)\geqslant 1$.
Then $K_X+\Delta+tL_X$ is ample for sufficiently large $t$.
\end{thm}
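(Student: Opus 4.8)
The plan is to use the Iitaka fibration associated to $K_X+\Delta$ together with the descent of strict nefness along it, reducing to lower-dimensional situations that have already been handled. First I would pass to a log resolution $\mu\colon (\widetilde{X},\widetilde{\Delta})\to (X,\Delta)$ so that the Iitaka fibration of $K_X+\Delta$ becomes a genuine morphism $f\colon \widetilde{X}\to Z$ onto a normal projective variety $Z$ with $\dim Z=\kappa(X,K_X+\Delta)\in\{1,2,3\}$. If $\kappa=3$, then $K_X+\Delta$ is big and we are done by {\hyperref[lem-big-ample]{Lemma \ref*{lem-big-ample}}}. So assume $\kappa\in\{1,2\}$. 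The pullback $L_{\widetilde X}:=\mu^*L_X$ is nef; it is no longer strictly nef, but it is \emph{almost strictly nef} in the sense of {\hyperref[defn-almost-sn]{Definition \ref*{defn-almost-sn}}}, since $\mu$ contracts only curves on which $L_{\widetilde X}$ has degree $0$ while $L_X$ itself is strictly nef on $X$. The key dichotomy I would then run is whether $L_{\widetilde X}$ is positive on the general fibre $F$ of $f$ or degenerate on it.

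\textbf{Case analysis on the Iitaka fibration.} If $L_{\widetilde X}\cdot C>0$ for every curve $C$ contracted by $f$ (equivalently $L_{\widetilde X}|_F$ is ample on a general fibre $F$), then since $K_X+\Delta$ is $f$-nef and $f$-trivial up to the base, a suitable combination $a(K_X+\Delta)+bL_X$ becomes big by pushing forward the positivity on fibres together with the big divisor $f^*(\text{ample on }Z)$ coming from $\kappa\geqslant 1$; concretely $K_{\widetilde X}+\widetilde\Delta$ is $\mathbb{Q}$-linearly equivalent to the pullback of an effective $\mathbb{Q}$-divisor on $Z$ plus an effective exceptional part, so $(K_{\widetilde X}+\widetilde\Delta)+tL_{\widetilde X}$ is big for $t\gg 1$, and pushing down via $\mu$ (using \cite[Lemma 4.10]{FKL16} as in the proof of {\hyperref[main_thm_surface]{Proposition \ref*{main_thm_surface}}}) gives that $K_X+\Delta+tL_X$ is big; then {\hyperref[lem-big-ample]{Lemma \ref*{lem-big-ample}}} and the basepoint-free theorem finish it. The genuinely delicate case is when $L_{\widetilde X}$ is degenerate on the general fibre $F$, i.e.\ $\dim F\geqslant 1$ and $L_{\widetilde X}|_F$ is not ample. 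Here $\dim F\in\{1,2\}$ and $F$ is a curve, surface, with $-K_F$ effective-ish (it is covered by the Iitaka-fibration structure, so $F$ has Kodaira dimension $0$ in the surface case, or is an elliptic/genus-one curve in the curve case). When $\dim F=1$, $F$ is an elliptic curve and $L_{\widetilde X}|_F$ has degree $0$; when $\dim F=2$, $F$ is a K3, abelian, Enriques or bielliptic surface (after suitable adjustment) and $L_{\widetilde X}|_F$ is nef with self-intersection $0$. In either subcase I would invoke {\hyperref[prop_Q-Goren_surface]{Proposition \ref*{prop_Q-Goren_surface}}} (for the surface-fibre case, applied to the divisor $L$ restricted to a prime divisor dominating $Z$) or a direct argument on the curve to derive a contradiction with strict nefness of $L_X$ on $X$, exactly in the style of {\hyperref[prop-q-effective]{Proposition \ref*{prop-q-effective}}}.

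\textbf{Reduction and contradiction.} More precisely, assume $K_X+\Delta+tL_X$ is not ample for $t\gg 1$. Then by {\hyperref[lem-big-ample]{Lemma \ref*{lem-big-ample}}} and {\hyperref[lem-not-big-some]{Lemma \ref*{lem-not-big-some}}} all the intersection numbers $(K_X+\Delta)^i\cdot L_X^{3-i}$ vanish, and there is a class $0\neq\alpha\in\overline{\textup{ME}}(X)$ with $(K_X+\Delta)\cdot\alpha=L_X\cdot\alpha=0$. Since $\kappa(X,K_X+\Delta)\geqslant 1$, some pluricanonical system $|m(K_X+\Delta)|$ is nonempty, giving an effective divisor $D\equiv$ (a positive multiple of $K_X+\Delta$), hence a non-zero effective divisor numerically proportional to $K_X+\Delta$; feeding this into {\hyperref[prop-klt-eff-ample]{Proposition \ref*{prop-klt-eff-ample}}} with $a=0$ (or into {\hyperref[prop-q-effective]{Proposition \ref*{prop-q-effective}}} after a $\mathbb{Q}$-factorialization) would already give ampleness of $K_X+\Delta+tL_X$ — \emph{except} that Proposition \ref*{prop-klt-eff-ample}(ii) needs $\Delta=0$ and canonical singularities. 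So the real content is to remove that restriction here: I would run the argument of {\hyperref[prop-q-effective]{Proposition \ref*{prop-q-effective}}} directly on a $\mathbb{Q}$-factorialization $Y\to X$, noting that $K_Y+\Gamma+tL_Y$ is nef and not big, restricting to each prime component $S$ of the effective divisor $\equiv$ const $\cdot(K_Y+\Gamma)$, applying {\hyperref[prop_Q-Goren_surface]{Proposition \ref*{prop_Q-Goren_surface}}} to get $K_S+tL_Y|_S$ big, extracting $L_Y\cdot S^2>0$ via the Hodge index theorem, and reaching the same numerical contradiction $0=L_Y\cdot(\text{eff})\cdot S\geqslant L_Y\cdot S^2>0$.

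\textbf{Main obstacle.} The hardest part is the degenerate-fibre analysis of the Iitaka fibration: controlling the singularities of the general fibre $F$ (which is only klt, not smooth, and may be a non-$\mathbb{Q}$-factorial surface or a singular curve), verifying that $L$ restricted to an appropriate prime divisor through $F$ is (almost) strictly nef so that {\hyperref[prop_Q-Goren_surface]{Proposition \ref*{prop_Q-Goren_surface}}} applies, and ruling out the possibility that $L_X$ is $f$-trivial without being globally degenerate — i.e.\ bookkeeping the exceptional and horizontal contributions carefully when pushing forward from $\widetilde X$ back to $X$ through a fibration that need not be equidimensional. The surface-fibre subcase in particular requires knowing that a weak-Fano/Calabi-Yau-type fibre cannot carry a strictly nef non-ample divisor, which is where the classification of minimal surfaces of Kodaira dimension $0$ and the earlier surface results enter.
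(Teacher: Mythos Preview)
Your plan is workable but far more elaborate than the paper's argument, and one of your two threads is superfluous.

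\textbf{What the paper does.} The paper's proof is four lines: since $\kappa(X,K_X+\Delta)\geqslant 1$, the divisor $K_X+\Delta$ is pseudo-effective, and by {\hyperref[lem_strict_nef_k]{Lemma~\ref*{lem_strict_nef_k}}} the divisor $K_X+\Delta+tL_X$ is strictly nef for $t\gg 1$. They then invoke \cite[Corollary~D]{LP20A}, which says that on a klt threefold pair with pseudo-effective $K_X+\Delta$, any strictly nef divisor of the form $K_X+\Delta+tL_X$ is numerically equivalent to a semi-ample divisor $M$. The induced morphism must be finite by strict nefness, so $M$ (and hence $K_X+\Delta+tL_X$) is ample. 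No Iitaka fibration, no case analysis.

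\textbf{Your two threads.} You run two arguments in parallel: (A) an Iitaka-fibration case analysis on the general fibre $F$, and (B) the observation that $\kappa\geqslant 1$ produces a non-zero effective divisor $D\sim_{\mathbb Q} m(K_X+\Delta)$, which you then feed into an adapted {\hyperref[prop-q-effective]{Proposition~\ref*{prop-q-effective}}} on a $\mathbb{Q}$-factorialization. Thread (B) alone already proves the theorem, and it is essentially correct; the Iitaka analysis (A), which you flag as the ``main obstacle'', is entirely unnecessary. Indeed, passing to a small $\mathbb{Q}$-factorialization $\pi:(Y,\Gamma)\to(X,\Delta)$, one has $K_Y+\Gamma=\pi^*(K_X+\Delta)$ and $L_Y:=\pi^*L_X$ almost strictly nef; the pulled-back effective divisor $D_Y=\sum n_iS_i\equiv K_Y+\Gamma$ then yields, via {\hyperref[prop_Q-Goren_surface]{Proposition~\ref*{prop_Q-Goren_surface}}} and the Hodge index argument, $L_Y\cdot S_1^2>0$ and the same contradiction $0=L_Y\cdot D_Y\cdot S_1\geqslant n_1L_Y\cdot S_1^2>0$. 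The one point you gloss over is the boundary: adjunction gives $K_{S_1}=(K_Y+S_1)|_{S_1}$, so you only know $L_Y\cdot(K_Y+\Gamma)\cdot S_1=0$, not $L_Y\cdot K_Y\cdot S_1=0$. Writing $\Gamma=\gamma_1S_1+\Gamma'$ with $\gamma_1<1$ (klt) and $S_1\not\subseteq\mathrm{Supp}\,\Gamma'$, one gets $(1-\gamma_1)L_Y\cdot S_1^2>L_Y\cdot\Gamma'\cdot S_1\geqslant 0$, which is exactly what is needed. This is routine but should be said.

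\textbf{Comparison.} Thread (B) gives a self-contained proof avoiding the external input \cite{LP20A}, at the cost of repeating the mechanism of {\hyperref[prop-q-effective]{Proposition~\ref*{prop-q-effective}}}/{\hyperref[prop-klt-eff-ample]{Proposition~\ref*{prop-klt-eff-ample}}} with the boundary handled by hand; this is the genuine content of your proposal. Thread (A), the Iitaka-fibre dichotomy, is not needed and, as you yourself note, the ``degenerate fibre'' subcase is genuinely awkward (non-normal, non-$\mathbb{Q}$-factorial fibres, non-equidimensional fibrations); you never actually close it. Drop (A), keep (B), and add the $\Gamma$-bookkeeping above; then you have a correct alternative proof, longer than the paper's but independent of \cite{LP20A}. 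Note incidentally that the paper's {\hyperref[thm_higher-dim-kappa>=1]{Proposition~\ref*{thm_higher-dim-kappa>=1}}} does carry out an Iitaka-style argument, but only under the stronger hypothesis of canonical singularities, precisely to avoid the boundary issues you would face.
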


\begin{proof}
Since $L_X$ is strictly nef, by {\hyperref[lem_strict_nef_k]{Lemma \ref*{lem_strict_nef_k}}}, $K_X+\Delta+tL_X$ is strictly nef for $t\gg 1$.
Clearly, $K_X+\Delta$ is pseudo-effective.
Hence, it follows from \cite[Corollary D]{LP20A} that $K_X+\Delta+tL_X$ is numerically equivalent to a semiample $\mathbb{Q}$-divisor $M$.
Then with $M$ replaced by a multiple, it defines a morphism $\sigma:X\to Y$ with $M=\sigma^*\mathcal{O}_Y(1)$.
Therefore, the strict nefness of $M$ together with the projection formula implies that $\sigma$ is a finite morphism and thus $M$ is  ample.
So our theorem is proved.
\end{proof}

As a higher dimensional analogue of {\hyperref[kappa>=1]{Theorem  \ref*{kappa>=1}}},
we show the following proposition by further requiring  the pair $(X,\Delta)$ having only canonical singularities.
Note that, this is also an extension of \cite[Theorem 2.6]{CCP08} and has its independent interests.
%We refer readers to \cite[Sections 2 and 3]{CCP08} for the smooth threefold case. 
\begin{prop}[{cf.~{\hyperref[kappa>=1]{Theorem  \ref*{kappa>=1}}} and \cite[Theorem 2.6]{CCP08}}]\label{thm_higher-dim-kappa>=1}
Let $(X,\Delta)$ be a projective canonical pair, and $L_X$ a strictly nef $\mathbb{Q}$-divisor on $X$.
Suppose  the Iitaka dimension $\kappa(a(K_X+\Delta)+bL_X)\geqslant\dim X-2$ for some $a,b\geqslant 0$.
Then $(K_X+\Delta)+tL_X$ is ample for sufficiently large $t$.
\end{prop}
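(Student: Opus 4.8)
The plan is to reduce to the threefold case already handled by Theorem \ref*{kappa>=1} (together with the Calabi–Yau and uniruled cases of the main theorem) by descending along an Iitaka-type fibration, exactly mimicking the strategy of \cite[Theorem 2.6]{CCP08}. First I would dispose of the trivial ranges of the parameters: if $b=0$ then $\kappa(K_X+\Delta)\geqslant \dim X-2\geqslant 1$ once $\dim X\geqslant 3$, and an induction on dimension applies after restricting to a general fibre of the Iitaka fibration of $K_X+\Delta$; if $a=0$ then $L_X$ itself has Iitaka dimension $\geqslant\dim X-2$, but a strictly nef $\mathbb{Q}$-divisor of positive Iitaka dimension is already big once we are on a surface fibre, and Lemma \ref*{lem-big-ample} finishes. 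So I may assume $a,b>0$, and after scaling, that $D:=a(K_X+\Delta)+bL_X$ is a $\mathbb{Q}$-divisor with $\kappa(D)\geqslant n-2$, where $n=\dim X$.

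The main construction is as follows. Let $f\colon X\dashrightarrow Z$ be the Iitaka fibration attached to $D$; resolving indeterminacy, take a smooth model $\widetilde{X}\xrightarrow{\mu}X$ with a morphism $g\colon \widetilde{X}\to Z$ whose general fibre $F$ has dimension $\leqslant 2$. Set $L_{\widetilde X}:=\mu^*L_X$, which is almost strictly nef in the sense of Definition \ref*{defn-almost-sn}, and observe that $L_{\widetilde X}|_F$ is almost strictly nef on $F$ (being the pullback of the strictly nef $L_X|_{\mu(F)}$ under the birational morphism $F\to \mu(F)$, as $\mu(F)$ is again of dimension $\leqslant 2$). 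Since $D|_F$ is numerically trivial on the general fibre $F$ (by the defining property of the Iitaka fibration), we get $a(K_X+\Delta)|_{\mu(F)}\equiv -b\,L_X|_{\mu(F)}$; hence on the surface (or curve) $\mu(F)$, up to the canonical-bundle adjustment from $\mu$, we are in the situation of Proposition \ref*{prop_Q-Goren_surface} or the one-dimensional analogue, and $K_{\mu(F)}+t L_X|_{\mu(F)}$ is big for $t\gg1$ — unless $\mu(F)$ is of a Calabi–Yau–type with $L_X|_{\mu(F)}$ numerically trivial on a curve, which contradicts strict nefness of $L_X$ unless $\dim F=1$ and $\mu(F)$ is an elliptic curve, a case one treats separately. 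The upshot is that $K_{\widetilde X}+tL_{\widetilde X}$ is $g$-big for $t\gg1$.

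From fibrewise bigness one promotes to absolute bigness in the usual way: pick an ample $A$ on $Z$; because $D$ is $g$-semiample (indeed $g^*$ of something big on $Z$, by construction of the Iitaka model) one has $g^*A\preccurlyeq cD$ numerically for some $c>0$ up to an effective error supported on exceptional loci; combining with the $g$-bigness of $K_{\widetilde X}+tL_{\widetilde X}$ and the canonical (hence effective) discrepancy divisor $E$ with $K_{\widetilde X}=\mu^*(K_X+\Delta)+E-\mu^*\Delta+\cdots$ (here the canonical hypothesis is essential, so that $E\geqslant0$ can be absorbed), one concludes that $\mu^*\bigl(a'(K_X+\Delta)+b'L_X\bigr)$ is big for suitable $a',b'\geqslant0$, hence so is $a'(K_X+\Delta)+b'L_X$ on $X$. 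Then Lemma \ref*{lem-big-ample} yields that $(K_X+\Delta)+tL_X$ is ample for $t\gg1$.

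The step I expect to be the main obstacle is the bookkeeping of discrepancies and boundary divisors when passing between $X$, the resolution $\widetilde X$, and the fibre $\mu(F)$: one must ensure that the error terms introduced by $\mu$ are effective (which is why $(X,\Delta)$ is assumed canonical rather than merely klt) and that the $\mathbb{Q}$-divisor $K_{\mu(F)}$ defined via Proposition \ref*{prop_Q-Goren_surface} really relates to $\mu^*(K_X+\Delta)|$ in the way the argument needs; the borderline case $\dim F=1$ with $\mu(F)$ elliptic and $a(K_X+\Delta)+bL_X\equiv 0$ on $F$ also needs a dedicated (easy) argument using that $L_X$ is strictly nef on the elliptic curve, forcing $L_X\cdot F>0$ and hence $D\cdot F>0$ unless $b=0$, contradicting $D$ being $f$-trivial.
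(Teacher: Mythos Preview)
Your overall architecture matches the paper's: pass to a smooth model resolving the Iitaka fibration of $D:=a(K_X+\Delta)+bL_X$, establish bigness of $K_{\widetilde X}+\widetilde\Delta+tL_{\widetilde X}$ on the general fibre, promote to global bigness via $\pi_1^*D\sim_{\mathbb Q}\pi_2^*H+E_0$ with $H$ ample and $E_0\geqslant 0$, then invoke Lemma~\ref{lem-big-ample}. However, your fibrewise step contains a genuine error.

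The assertion ``$D|_F$ is numerically trivial on the general fibre $F$ (by the defining property of the Iitaka fibration)'' is false. The Iitaka fibration only guarantees $\kappa(\pi_1^*D|_F)=0$; there is no reason for $\pi_1^*D|_F$, let alone $D|_{\mu(F)}$, to be numerically trivial. Consequently the relation $a(K_X+\Delta)|_{\mu(F)}\equiv -b\,L_X|_{\mu(F)}$ does not hold, and the whole reduction to ``$-K$ proportional to $L$ on the fibre'' collapses. This also explains why you run into a spurious elliptic-curve obstruction: that case is an artefact of the incorrect numerical triviality claim.

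The paper sidesteps this entirely. It works on the smooth fibre $F\subseteq\widetilde X$ (not on $\mu(F)$) and uses only that $L_{\widetilde X}|_F$ is almost strictly nef. When $\dim F=1$ this forces $L_{\widetilde X}|_F$ ample; when $\dim F=2$ it appeals to \cite[Theorem~26]{Cha20}, which says that for an almost strictly nef divisor $L$ on any smooth projective surface $S$ one has $K_S+tL$ big for $t\gg1$. (Proposition~\ref{prop_Q-Goren_surface}, which you cite, is the special case of prime divisors on $\mathbb Q$-factorial klt threefolds and is not applicable here.) This yields $(K_{\widetilde X}+\widetilde\Delta+tL_{\widetilde X})|_F$ big with no case analysis on the fibre type. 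For the promotion step, the paper makes your ``$g^*A\preccurlyeq cD$ up to effective error'' precise via the decomposition
\[
2(K_{\widetilde X}+\widetilde\Delta+tL_{\widetilde X})\ \sim_{\mathbb Q}\ \bigl(K_{\widetilde X}+\widetilde\Delta+tL_{\widetilde X}+\pi_2^*H\bigr)+E_0+\textstyle\sum a\,a_iE_i+N,
\]
where the first bracket is big (relative bigness plus ample pullback), the middle terms are effective (here the canonical hypothesis gives $a_i\geqslant 0$), and $N:=(1-a)(K_{\widetilde X}+\widetilde\Delta)+(t-b)L_{\widetilde X}$ is pseudo-effective for $t\gg1$. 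Your sketch of this step is in the right spirit but would need exactly this bookkeeping to go through.
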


\begin{proof}
If $\dim X\leqslant 2$, then our theorem follows from {\hyperref[main_thm_surface]{Proposition \ref*{main_thm_surface}}}.
So we may assume $\dim X\geqslant 3$.
By {\hyperref[lem_strict_nef_k]{Lemma  \ref*{lem_strict_nef_k}}}, $K_X+\Delta+tL_X$ is strictly nef for $t\gg 1$. 
With $b$ replaced by $b/a$ (if $a\neq 0$), we may further assume  $0\leqslant a\leqslant 1$.
Let $\pi:X\dashrightarrow Y$ be the Iitaka fibration of $a(K_X+\Delta)+bL_X$ (with $\dim Y\geqslant 1$).
Resolving the indeterminacy of $\pi$ and the singularities of $X$, we get the induced morphisms $\pi_1:\widetilde{X}\to X$ and $\pi_2:\widetilde{X}\to Y$ such that  $\widetilde{X}$ is smooth and
\begin{equation}\label{equ_1}
	K_{\widetilde{X}}+\widetilde{\Delta}=\pi_1^*(K_X+\Delta)+\sum a_iE_i
\end{equation}
with $\widetilde{\Delta}:=(\pi_1)_*^{-1}\Delta$ and $a_i\geqslant 0$, noting that $(X,\Delta)$ is a projective canonical pair.
Moreover, there exists an ample $\mathbb{Q}$-divisor $H$ on $Y$ such that
\begin{equation}\label{equ_2}
\pi_1^*(a(K_X+\Delta)+bL_X)\sim_{\mathbb{Q}}\pi_2^*H+E_0.	
\end{equation}
Here, some multiple $nE_0$ is the fixed component of $|n\pi_1^*(a(K_X+\Delta)+bL_X)|$ (which defines $\pi_2$) and hence $E_0\ge 0$. 
Let $L_{\widetilde{X}}:=\pi_1^*L_X$, being almost strictly nef (cf.~{\hyperref[defn-almost-sn]{Definition \ref*{defn-almost-sn}}}) 
%(cf.~{\hyperref[sec_pre]{Section \ref*{sec_pre}}}) 
on $\widetilde{X}$.
Then for a general fibre $F$ of $\pi_2$, the restriction $L_{\widetilde{X}}|_F$ is almost strictly nef.

If $\kappa(a(K_X+\Delta)+bL_X)=\dim X$, then our theorem follows from {\hyperref[lem-big-ample]{Lemma  \ref*{lem-big-ample}}}.
If $\kappa(a(K_X+\Delta)+bL_X)=\dim X-1$, then  $L_{\widetilde{X}}|_F$ is clearly ample.
If $\kappa(a(K_X+\Delta)+bL_X)=\dim X-2$, then $\dim F=2$; in this case, it follows from \cite[Theorem 26]{Cha20} that $K_F+tL_{\widetilde{X}}|_F$ is big for $t\gg 1$ and hence $(K_{\widetilde{X}}+\widetilde{\Delta}+tL_{\widetilde{X}})|_{F}$ is big for $t\gg 1$.
Let us pick the ample $\mathbb{Q}$-divisor $H$ on $Y$ in {\hyperref[equ_2]{Equation  (\ref*{equ_2})}}.

In each case, we see that 
$K_{\widetilde{X}}+\widetilde{\Delta}+tL_{\widetilde{X}}+\pi_2^*H$
is big for  $t\gg 1$: fixing one $t$ such that $(K_{\widetilde{X}}+\widetilde{\Delta}+tL_{\widetilde{X}})|_{F}$ is big, we see that  $A_k:=(K_{\widetilde{X}}+\widetilde{\Delta}+tL_{\widetilde{X}})+k\pi_2^*H$ is big for $k\gg 1$ (cf.~e.g. \cite[Lemma 3.23]{KM98}).
Since $K_{\widetilde{X}}+\widetilde{\Delta}+tL_{\widetilde{X}}=\pi_1^*(K_X+\Delta+tL_X)+\sum a_iE_i$ is   pseudo-effective, our $A_1=K_{\widetilde{X}}+\widetilde{\Delta}+tL_{\widetilde{X}}+\pi_2^*H=\frac{1}{k}(A_k+(k-1)(K_{\widetilde{X}}+\widetilde{\Delta}+tL_{\widetilde{X}}))$ is big.

%By \cite[Lemma 2.60]{KM98}, there exist an ample $\mathbb{Q}$-divisor $G$ and an effective $\mathbb{Q}$-divisor $M\ge 0$ on $\widetilde{X}$ such that
%$A_1\equiv G+M\ge G$.
Set $N:=(1-a)(K_{\widetilde{X}}+\widetilde{\Delta})+(t-b)L_{\widetilde{X}}$, which is pseudo-effective for sufficiently large $t\gg 1$ (cf.~{\hyperref[lem_strict_nef_k]{Lemma  \ref*{lem_strict_nef_k}}} and {\hyperref[equ_1]{Equation  (\ref*{equ_1})}}).
Following from {\hyperref[equ_1]{Equation  (\ref*{equ_1})}} and {\hyperref[equ_2]{Equation  (\ref*{equ_2})}}, we have
\begin{align*}
2(K_{\widetilde{X}}+\widetilde{\Delta}+tL_{\widetilde{X}})&=A_1+(a(K_{\widetilde{X}}+\widetilde{\Delta})+bL_{\widetilde{X}}-\pi_2^*H)+N\\
&=
A_1+\pi_1^*(a(K_X+\Delta)+bL_X)+\sum aa_iE_i-\pi_2^*H+N\\
&\sim_{\mathbb{Q}} A_1+E_0+\sum aa_iE_i+N(\ge A_1+N),
\end{align*}
which is big for sufficiently large $t\gg 1$.
Thus, the push-forward $K_X+\Delta+tL_X$ is also big for $t\gg1$ (cf.~\cite[Lemma 4.10(i\!i\!i) and its proof]{FKL16}).
So our theorem follows from {\hyperref[lem-big-ample]{Lemma  \ref*{lem-big-ample}}}.
\end{proof}

\section{Non-trivial Albanese morphism,  Proof of \texorpdfstring{\hyperref[main_theorem_Goren_ter_3fold]{Theorem   \ref*{main_theorem_Goren_ter_3fold} (2)}}{text}}

In this section, we study 
{\hyperref[main-conj-singular-arbitrary]{Question    \ref*{main-conj-singular-arbitrary}}} for the case when the augmented irregularity is positive  with the main results {\hyperref[thm_alb]{Theorem   \ref*{thm_alb}}} and {\hyperref[coro_q0>0]{Corollary   \ref*{coro_q0>0}}}; in particular, we prove {\hyperref[main_theorem_Goren_ter_3fold]{Theorem   \ref*{main_theorem_Goren_ter_3fold} (2)}}. 

First, the following theorem is a singular version of \cite[Theorem 3.1]{CCP08}.  
The proof is almost the same after we replace  \cite[Theorem 0.3, Lemma 1.5, Propositions 1.7, 1.8]{CCP08} by 
\cite[Theorem A]{LOWYZ21}, {\hyperref[lem-big-ample]{Lemma  \ref*{lem-big-ample}}},  {\hyperref[prop_Q-Goren_surface]{Proposition  \ref*{prop_Q-Goren_surface}}} and {\hyperref[prop-q-effective]{Proposition  \ref*{prop-q-effective}}}, respectively. 
For readers' convenience, we  include a more detailed proof here. 
\begin{thm}[{cf.~\cite[Theorem 3.1]{CCP08}, \cite[Theorem 8.1]{LP20B}}]\label{thm_alb}
Let $(X,\Delta)$ be a   projective klt threefold pair, and $L_X$ a  strictly nef $\mathbb{Q}$-divisor on $X$.
Suppose there exists a non-constant morphism $\pi:X\to A$ to an abelian variety (this is the case when $q(X)>0$).
Then $K_X+\Delta+tL_X$ is numerically equivalent to a non-zero effective divisor for  some $t\gg 1$. 
If $X$ is further assumed to be either $\mathbb{Q}$-factorial, or have only canonical singularities, 
then $K_X+tL_X$ is ample for $t\gg 1$.
\end{thm}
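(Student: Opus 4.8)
The plan is to first establish the numerical‑equivalence assertion and then read off the ampleness from it: once $K_X+\Delta+tL_X$ is known to be numerically equivalent to a non‑zero effective divisor for some $t\gg 1$, the case $\Delta=0$ of that assertion, combined with {\hyperref[prop-q-effective]{Proposition \ref*{prop-q-effective}}} (if $X$ is $\mathbb{Q}$‑factorial) or with {\hyperref[prop-klt-eff-ample]{Proposition \ref*{prop-klt-eff-ample}}}(ii) (if $X$ has only canonical singularities), yields that $K_X+tL_X$ is ample. By {\hyperref[lem_strict_nef_k]{Lemma \ref*{lem_strict_nef_k}}} the divisor $N_t:=K_X+\Delta+tL_X$ is strictly nef for $t\gg 1$, and I fix such a $t$, to be enlarged finitely often. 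Let $f\colon X\to Y$ be the Stein factorization of $X\to\overline{\pi(X)}\subseteq A$; then $Y$ is a normal projective variety carrying a finite morphism onto a positive‑dimensional closed subvariety of an abelian variety. Since an abelian variety contains no rational curve, neither does $Y$ (the image of a rational curve would be a rational curve in that subvariety, impossible, or a point, impossible for a curve inside a fibre of a finite morphism); hence $Y$ is non‑uniruled and $K_Y$ is pseudo‑effective. Put $e:=\dim Y\in\{1,2,3\}$ and treat the cases $e\le 2$ and $e=3$ separately.

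Assume first $e\le 2$. A general fibre $F$ of $f$ has dimension $3-e\in\{1,2\}$, the restricted pair $(F,\Delta|_F)$ is klt, $L_X|_F$ is strictly nef, and, the normal bundle of a general fibre being trivial, adjunction gives $N_t|_F=K_F+\Delta|_F+tL_X|_F$. If $\dim F=1$ this is a divisor of positive degree once $t$ is large enough; if $\dim F=2$ it is big for $t\gg 1$ by the surface case, i.e. {\hyperref[main_thm_surface]{Proposition \ref*{main_thm_surface}}} (or {\hyperref[prop_Q-Goren_surface]{Proposition \ref*{prop_Q-Goren_surface}}} applied on a $\mathbb{Q}$‑factorialization). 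In both cases $N_t$ restricts to a big divisor on a general fibre of $f$. Since the base $Y$ has non‑negative Kodaira dimension, the superadditivity of the Iitaka dimension for klt fibrations over such a base — which in these dimensions is supplied by the canonical bundle formula (cf.~\cite{Amb05}) together with Viehweg‑type weak positivity of the relative pluricanonical sheaves $f_*\mathcal{O}_X(m(K_{X/Y}+\Delta+tL_X))$ and the pseudo‑effectivity of $K_Y$ (passing to resolutions if necessary) — gives $\kappa(X,N_t)\ge \dim F+\kappa(Y)\ge 1$. Hence $N_t$ is numerically equivalent to a non‑zero effective divisor.

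Assume now $e=3$, so that $\pi$ is generically finite; then $\overline{\pi(X)}$ is a threefold subvariety of an abelian variety, hence of Kodaira dimension $\ge 0$ (Ueno), and since a generically finite morphism does not decrease the Kodaira dimension while adding an effective boundary only increases it, $\kappa(X,K_X+\Delta)\ge\kappa(\overline{\pi(X)})\ge 0$. If $\kappa(X,K_X+\Delta)\ge 1$ we conclude by {\hyperref[kappa>=1]{Theorem \ref*{kappa>=1}}}; if $\kappa(X,K_X+\Delta)=0$ and some pluricanonical system has a non‑zero member, then $K_X+\Delta$, and a fortiori $N_t$, is already numerically equivalent to a non‑zero effective divisor. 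There remains the case $K_X+\Delta\sim_{\mathbb{Q}}0$: here $X$ is non‑uniruled (a uniruled variety admits no generically finite morphism to a subvariety of an abelian variety), so passing to a $\mathbb{Q}$‑factorialization makes $K_X$ $\mathbb{Q}$‑Cartier and pseudo‑effective, which together with $K_X\sim_{\mathbb{Q}}-\Delta$ forces $\Delta=0$ and $K_X\sim_{\mathbb{Q}}0$; moreover $\overline{\pi(X)}$, being a subvariety of Kodaira dimension $0$ of an abelian variety, is a translate of an abelian threefold. A short argument with flops — an abelian threefold has no non‑trivial birational contraction and no non‑trivial small modification, so it is the unique minimal model in its birational class — then identifies $X$ with an abelian threefold, and by \cite[Proposition 1.4]{Ser95} the strictly nef $L_X$ is ample; thus $K_X+tL_X=tL_X$ is ample.

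The decisive point is the superadditivity step for $e\le 2$: what is really needed is the right weak‑positivity statement for $f_*\mathcal{O}_X(m(K_{X/Y}+\Delta+tL_X))$ in the klt‑pair setting, the function of the nef summand $tL_X$ being precisely to force fibrewise bigness even when $X$ is uniruled, so that the argument does not presuppose that $K_X+\Delta$ is pseudo‑effective, while the non‑uniruledness of $Y$ — obtained for free from the finite map into an abelian variety — furnishes the positivity of the base needed to promote fibrewise bigness to $\kappa(X,N_t)\ge 1$. After that, {\hyperref[prop-q-effective]{Propositions \ref*{prop-q-effective}}} and {\hyperref[prop-klt-eff-ample]{\ref*{prop-klt-eff-ample}}} finish the argument, and the structure of threefolds with trivial canonical class enters only in the corner case $e=3$, $K_X\sim_{\mathbb{Q}}0$.
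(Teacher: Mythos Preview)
Your approach diverges from the paper's, which proceeds via generic vanishing on the abelian variety $A$: setting $D_t=2m(K_X+\Delta+tL_X)$ and $\mathcal{F}_t=\pi_*\mathcal{O}_X(D_t)$, one shows (by relative base-point-freeness and Kawamata--Viehweg vanishing) that $\mathcal{F}_t$ satisfies $\mathrm{IT}^0$ after twist by any ample $M^\vee$ coming from $\hat A$, and then Hacon's criterion gives $V^0(\mathcal{F}_t)\ne\emptyset$, i.e.\ $h^0(X,D_t+\pi^*P)>0$ for some $P\in\mathrm{Pic}^0(A)$; since $\pi^*P\equiv 0$ this exhibits $D_t$ as numerically effective, and $D_t+\pi^*P\sim 0$ is excluded because then $-(K_X+\Delta)$ would be strictly nef, forcing $X$ rationally connected. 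The point is that the Fourier--Mukai machinery produces sections of $D_t$ \emph{after a numerically trivial twist}, which is exactly what you need, without ever asking $L_X$ to be more than nef.

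Your substitute for this --- the superadditivity $\kappa(X,N_t)\ge\kappa(F,N_t|_F)+\kappa(Y)$ in the case $e\le 2$ --- is a genuine gap. Weak positivity of $f_*\mathcal{O}_X(m(K_{X/Y}+\Delta+tL_X))$ is known when the twist is semi-ample (or carries a smooth/singular metric of semipositive curvature), but for a merely nef $L_X$ no such positivity theorem is available: nef line bundles need not admit semipositively curved metrics, and the Viehweg/Fujino/Popa--Schnell results do not cover this case. Nor does the canonical bundle formula apply, since $K_X+\Delta+tL_X$ is not relatively $\mathbb{Q}$-trivial. You have correctly identified this as the decisive step, but you have not supplied an argument for it, and none is routine. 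There are also secondary issues in your $e=3$ case: the claim that ``$K_X+\Delta$ numerically effective, hence \emph{a fortiori} $N_t$'' is false as stated (adding the nef but possibly non-effective $tL_X$ to an effective divisor need not yield something numerically effective), and the ``short argument with flops'' identifying $X$ with an abelian threefold when $K_X\sim_\mathbb{Q}0$ is incomplete --- $X$ is only klt and a priori admits contractions in the Stein factorization over its abelian image.
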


\begin{proof}
Replacing $L_X$ by a multiple, we may assume that $L_X$ is Cartier.
Let $m$ be the Cartier index of $K_X+\Delta$, $D_t:=2m(K_X+\Delta+tL_X)$ and $\mathcal{F}_t:=\pi_*\mathcal{O}_X(D_t)$. 
Fix one $t>6m$ such that $D_t$ is strictly nef and  $D_t|_{F}$ is ample for a general fibre $F$ of $\pi$ (cf.~{\hyperref[main_thm_surface]{Proposition \ref*{main_thm_surface}}} and \cite[Theorem 1.37]{KM98}). 
%With $m$ replaced by a multiple, we may further assume that $D_t|_{F_0}$ is very ample for some fibre $F_0$ and thus $\mathcal{F}_t$ is a non-zero sheaf, noting that $\pi^*\mathcal{F}_t\to \mathcal{O}_X(D_t)$ is surjective around $F_0$.
%Since $D_t$ is $\pi$-ample over a general point of $\pi(X)$, 
Then it is easy to verify that $D_t$ is $\pi$-big  (cf.~\cite[Definition 3.22]{KM98}). 
Since $D_t-(K_X+\Delta)=(2m-1)(K_X+\Delta+tL_X)+tL_X$ is  $\pi$-big and $\pi$-nef, by the relative base-point-free theorem (cf.~\cite[Theorem 3.24]{KM98}), $nD_t$ is $\pi$-free  for all $n\gg 1$.
Hence, with $m$ replaced by a multiple, we may further assume that $D_t$ is $\pi$-free (i.e., $\pi^*\mathcal{F}_t\to \mathcal{O}_X(D_t)$ is surjective), and thus $\mathcal{F}_t$ is a non-zero sheaf.

Let $\hat{A}=\textup{Pic}^0(A)$ be the dual abelian variety and $\mathcal{P}$  the normalized Poincar\'e line bundle (in the sense that both $\mathcal{P}|_{A\times\{\hat{0}\}}$ and $\mathcal{P}|_{\{0\}\times\hat{A}}$ are trivial).
Denote by $p_A$ and $p_{\hat{A}}$ the projections of $A\times\hat{A}$ onto $A$ and $\hat{A}$.
For any ample line bundle $\hat{M}$ on $\hat{A}$, we define the isogeny $\phi_{\hat{M}}:\hat{A}\to A$  by $\phi_{\hat{M}}(\hat{a})=t_{\hat{a}}^*\hat{M}^{\vee}\otimes\hat{M}$, and
let $M:=(p_{A})_*(p_{\hat{A}}^*\hat{M}\otimes\mathcal{P})$ be the vector bundle on $A$.
Recall that $\phi_{\hat{M}}^*(M^{\vee})\simeq\hat{M}^{\oplus\dimcoh^0(\hat{M})}$ (cf.~\cite[Proposition 3.11 (1)]{Muk81}). 
%Then one has that $\phi_{\hat{M}}^*(M^{\vee})\cong\oplus_{h^0(\hat{M})}\hat{M}$  
\begin{claim}\label{claim_fm}
$H^i(A,\mathcal{F}_t\otimes M^{\vee})=0$ for any  $t\gg1$ and any ample line bundle $\hat{M}$ on $\hat{A}$. 
\end{claim}
Suppose the claim for the time being.
Then it follows from \cite[Theorem 1.2 and Corollary 3.2]{Hac04} that
we have a chain of inclusions
$V^0(\mathcal{F}_t)\supseteq V^1(\mathcal{F}_t)\supseteq\cdots\supseteq V^n(\mathcal{F}_t)$, 
where $V^i(\mathcal{F}_t):=\{P\in\textup{Pic}^0(A)~|~\dimcoh^i(A,\mathcal{F}_t\otimes P)\neq 0\}$.
%denotes the locus where the $i$-th cohomology does not vanish.
Recall that $\mathcal{F}_t$ is a  non-zero sheaf by the choice of our $m$. 
If  $V^i(\mathcal{F}_t)=\emptyset$ for all $i$, then the Fourier-Mukai transform of $\mathcal{F}_t$ is zero, noting that $\mathcal{P}|_{A\times\{P\}}\cong P$ and 
$\RDer^i\!(p_{\hat{A}})_*(p_A^*\mathcal{F}_t\otimes\mathcal{P})_P=\Coh^i(A,\mathcal{F}_t\otimes P)$ 
for any $P\in\textup{Pic}^0(X)$.  
By \cite[Theorem 2.2]{Muk81},  $\mathcal{F}_t$ is a zero sheaf, which is absurd. 
So $V^0(\mathcal{F}_t)\neq\emptyset$ and thus
$\dimcoh^0(X,D_t+\pi^*P)\neq 0$ for some $P\in\textup{Pic}^0(A)$.
If $D_t+\pi^*P\sim 0$, then $-(K_X+\Delta)$ is strictly nef; hence $X$ is rationally connected and  $q(X)=0$ by \cite[Theorem A]{LOWYZ21},  which contradicts the existence of $\pi$. 
So  $D_t\equiv D_t+\pi^*P\sim E$ for some non-zero effective divisor $E$, 
%Let $L_X':=L_X+\frac{1}{2mt}\pi^*P\equiv L_X$ be a shifted $\mathbb{Q}$-divisor.
%Then $2m(K_X+tL_X')=D_t+\pi^*P\sim E$, 
and the first part of our theorem is proved.

If $X$ is further assumed to be  $\mathbb{Q}$-factorial (resp. have only canonical singularities), then we apply the first part of our theorem for the klt (resp. canonical) pair $(X,0)$, and thus $K_X+tL_X$ is numerically equivalent to a non-zero effective divisor; hence, the second part of our theorem 
 follows from {\hyperref[prop-q-effective]{Proposition  \ref*{prop-q-effective}}} (resp. {\hyperref[prop-klt-eff-ample]{Proposition  \ref*{prop-klt-eff-ample}}}). 
 
\par \vskip 1pc \noindent
\textbf{Proof of {\hyperref[claim_fm]{Claim \ref*{claim_fm}}}.}
Fix an ample divisor $H$ on $A$. 
Since $D_{t}$ is  $\pi$-big, our $D_t+\pi^*H$ is big, noting that $D_t$ is not only $\pi$-big, but also nef (and hence pseudo-effective) (cf.~\cite[Lemma 3.23]{KM98}).
Therefore, $D_t+\pi^*(H+P)$ is nef and big for any $P\in\textup{Pic}^0(A)$. 
Since $(X,\Delta)$ has at worst klt singularities and 
%Set 
%$$\widetilde{D}_0:=2m(K_X+tL_X)+g^*H=D_t+g^*H.$$
%Then $\widetilde{D}_0-K_X=\frac{2m-1}{2m}D_t+tL_X+g^*H$ is again nef and big.
%By the base-point-free theorem, $k\widetilde{D}_0$ is globally generated for some $k\gg 1$. 
%Taking $D$ to be a general smooth member in $|k\widetilde{D}_0|$,  we see that
$$D_t+\pi^*(H+P)=K_X+\Delta+\frac{2m-1}{2m}(D_t+\pi^*(H+P))+\frac{1}{2m}\pi^*(H+P)+tL_X=:K_X+\Delta+\widetilde{D}$$
 %and , $(X,\frac{2m-1}{2mk}D)$ is klt.
with $\widetilde{D}$ being a nef and big $\mathbb{Q}$-Cartier Weil divisor, it follows from  Kawamata-Viehweg Vanishing Theorem (cf. e.g. \cite[Theorem 2.70]{KM98}) that 
$$\Coh^j(A,\mathcal{F}_t\otimes H\otimes P)\simeq\Coh^j(X,D_t+\pi^*H+\pi^*P)\simeq0$$
for all $j\geqslant 1$ (recall that $\mathcal{F}_t:=\pi_*\scrO_X(D_t)$). 
The first equality is due to $\RDer^j\pi_*(D_t+\pi^*H+\pi^*P)=0$ for all $j\geqslant 1$ (cf.~\cite[Remark 1-2-6]{KMM87}).
%Applying Kawamata-Viehweg Vanishing Theorem and \cite[Theorem 1-2-5]{KMM87} again to $D_t+\pi^*H+\pi^*P$ with $P\in\textup{Pic}^0(A)$, we have
%$$H^j(A,\mathcal{F}_t\otimes H\otimes P)=H^j(X,D_t+\pi^*H+\pi^*P)=0$$
%for all $j\ge 1$.
Following the same terminology as in \cite[Definition 2.3]{Muk81}, for any ample line bundle $H$ on $A$, we say that \textit{the index theorem of degree zero holds for the sheaf $\mathcal{F}_t\otimes H$} (which is also abbreviated to ``$IT^0$'' in \cite[Proof of Theorem 3.1]{CCP08}).

Let $\hat{M}$ be any ample line bundle on $\hat{A}$, and $\phi_{\hat{M}}:\hat{A}\to A$  the isogeny defined  in the beginning of the proof. 
Let $\hat{\pi}:\hat{X}:=X\times_A\hat{A}\to \hat{A}$ be the base change with the induced map $\varphi:\hat{X}\to X$ being \'etale.
Then we have $K_{\hat{X}}=\varphi^*K_X$ and $L_{\hat{X}}:=\varphi^*L_X$ is strictly nef on $\hat{X}$.
Let $\mathcal{G}_t:=\hat{\pi}_*\varphi^*D_t$.
With the same argument as above, we see that, for any ample line bundle $\hat{M}$ on $\hat{A}$, the index theorem of degree zero holds for the sheaf $\mathcal{G}_t\otimes\hat{M}$. 
Since $\phi_{\hat{M}}$ is flat and $\pi$ is projective, we have
\begin{align*}
&~\phi_{\hat{M}}^*(\mathcal{F}_t\otimes M^{\vee})=\phi_{\hat{M}}^*(\pi_*D_t\otimes M^{\vee})=\phi_{\hat{M}}^*\pi_*(D_t\otimes \pi^*M^{\vee})=\hat{\pi}_*\varphi^*(D_t\otimes \pi^*M^{\vee})\\
&=\hat{\pi}_*(\varphi^*D_t\otimes\hat{\pi}^*\phi_{\hat{M}}^*M^{\vee})=\hat{\pi}_*(\varphi^*D_t\otimes\hat{\pi}^*(\oplus \hat{M}))=\oplus(\hat{\pi}_*\varphi^*D_t\otimes\hat{M})=\oplus(\mathcal{G}_t\otimes\hat{M}).
\end{align*}
Therefore, the index theorem of degree zero holds for the sheaf $\phi_{\hat{M}}^*(\mathcal{F}_t\otimes M^{\vee})$, i.e., for any $j\ge 1$ and $\hat{P}\in\textup{Pic}^0(\hat{A})=A$, 
we have the vanishing $H^j(\hat{A},\phi_{\hat{M}}^*(\mathcal{F}_t\otimes M^{\vee})\otimes\hat{P})=0$. 
Since $\phi_{\hat{M}}^*$ is finite, taking $\hat{P}:=\mathcal{O}_{\hat{A}}$, we see that $H^j(A,\mathcal{F}_t\otimes M^{\vee}\otimes (\phi_{\hat{M}})_*\mathcal{O}_{\hat{A}})=0$.
Now that $\mathcal{O}_A$ is a direct summand of $(\phi_{\hat{M}})_*\mathcal{O}_{\hat{A}}$ (cf.~\cite[Proposition 5.7]{KM98}),  our claim is thus  proved.
\end{proof}

We give a few remarks on {\hyperref[thm_alb]{Theorem   \ref*{thm_alb}}}.
First,  when $K_X+\Delta$ is pseudo-effective, \cite[Theorem 8.1]{LP20B} shows a stronger version of {\hyperref[thm_alb]{Theorem   \ref*{thm_alb}}} in higher dimensions. 
However, our {\hyperref[thm_alb]{Theorem   \ref*{thm_alb}}} here relaxes this assumption by assuming the nef divisor $L_X$ being strictly nef. 
Second, we note that, the first part of  {\hyperref[thm_alb]{Theorem   \ref*{thm_alb}}} actually holds for $n$-dimensional $X$ if we assume that  {\hyperref[main-conj-singular-arbitrary]{Question  \ref*{main-conj-singular-arbitrary}}} has a positive answer for the case when $\dim X\leqslant n-1$.

As a consequence of {\hyperref[thm_alb]{Theorem   \ref*{thm_alb}}}, we immediately have the corollary below. 
\begin{cor}[{cf.~{\hyperref[main_theorem_Goren_ter_3fold]{Theorem   \ref*{main_theorem_Goren_ter_3fold} (2)}}}]\label{coro_q0>0}
Let $X$ be a  normal projective threefold with at worst klt singularities, and $L_X$ a strictly nef $\mathbb{Q}$-divisor on $X$.
Suppose that the augmented irregularity $q^\circ(X)>0$.
Suppose further that either $X$ is $\mathbb{Q}$-factorial, or $X$ has only canonical singularities.
Then $K_X+tL_X$ is ample for sufficiently large $t$.
\end{cor}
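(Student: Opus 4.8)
The plan is to reduce to Theorem \ref{thm_alb} by producing a non-constant morphism to an abelian variety, possibly after passing to a quasi-étale cover. First I would unwind the hypothesis $q^\circ(X)>0$: by definition of the augmented irregularity, there is a finite quasi-étale cover $\nu:\widetilde{X}\to X$ with $q(\widetilde{X})=\textup{h}^1(\widetilde{X},\mathcal{O}_{\widetilde{X}})>0$, so the Albanese morphism $\alpha:\widetilde{X}\to \textup{Alb}(\widetilde{X})$ is non-constant onto a positive-dimensional abelian variety. Since $\nu$ is quasi-étale and $X$ has only klt singularities, $\widetilde{X}$ again has only klt singularities; moreover if $X$ is $\mathbb{Q}$-factorial then so is $\widetilde{X}$ (étale-in-codimension-one descent/ascent of $\mathbb{Q}$-factoriality for klt varieties), and if $X$ has only canonical singularities then so does $\widetilde{X}$ (canonicity is preserved under quasi-étale covers, as $K_{\widetilde{X}}=\nu^*K_X$ away from a codimension $\geqslant 2$ set). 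Also $L_{\widetilde{X}}:=\nu^*L_X$ is strictly nef on $\widetilde{X}$, since $\nu$ is finite and surjective so every curve on $\widetilde{X}$ maps onto a curve on $X$.

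Next I would apply Theorem \ref{thm_alb} to the threefold $\widetilde{X}$ (which is klt, and is either $\mathbb{Q}$-factorial or canonical by the above) with the non-constant morphism $\alpha$ to the abelian variety $\textup{Alb}(\widetilde{X})$ and the strictly nef $\mathbb{Q}$-divisor $L_{\widetilde{X}}$. This yields that $K_{\widetilde{X}}+t L_{\widetilde{X}}$ is ample for $t\gg 1$. Now I would descend ampleness along $\nu$: since $\nu$ is a finite surjective morphism and $K_{\widetilde{X}}+tL_{\widetilde{X}}=\nu^*(K_X+tL_X)$ (using $K_{\widetilde{X}}=\nu^*K_X$ for a quasi-étale cover and $L_{\widetilde{X}}=\nu^*L_X$), the ampleness of the pullback $\nu^*(K_X+tL_X)$ implies the ampleness of $K_X+tL_X$ by descent of ampleness along finite surjective morphisms (see \cite[Proposition 1.41]{KM98} or an application of the Nakai--Moishezon criterion together with the projection formula). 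This gives the conclusion for $t\gg 1$.

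The main obstacle I anticipate is the bookkeeping around the quasi-étale cover: one must verify carefully that $K_{\widetilde{X}}=\nu^*K_X$ as $\mathbb{Q}$-Cartier divisors (so that the decomposition $K_{\widetilde{X}}+tL_{\widetilde{X}}=\nu^*(K_X+tL_X)$ is valid), that the relevant class of singularities ($\mathbb{Q}$-factorial klt, or canonical) is inherited by $\widetilde{X}$, and that passing to the cover does not destroy strict nefness — all of which are standard but need to be invoked with the right references. A secondary subtlety is that Theorem \ref{thm_alb} is stated for the pair $(X,0)$ with the two alternative singularity hypotheses precisely matching the two cases here, so one should simply check that the hypotheses of Theorem \ref{thm_alb} are met on $\widetilde{X}$ in each case and then quote it. Everything else is a direct application: there is no new MMP or vanishing-theorem input beyond what Theorem \ref{thm_alb} already encapsulates.
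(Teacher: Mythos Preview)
Your argument in the canonical case is correct and matches the paper's proof. The gap is in the $\mathbb{Q}$-factorial case: the assertion that $\mathbb{Q}$-factoriality ascends along a quasi-\'etale cover is not a standard fact and is in general false. $\mathbb{Q}$-factoriality is a global condition, not an \'etale-local one, and there is no ``\'etale-in-codimension-one ascent of $\mathbb{Q}$-factoriality for klt varieties'' to invoke. Consequently you cannot feed $\widetilde{X}$ into the second (ampleness) clause of Theorem~\ref{thm_alb} under the $\mathbb{Q}$-factorial hypothesis, because $\widetilde{X}$ may fail to be $\mathbb{Q}$-factorial.

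The paper circumvents this precisely: in the $\mathbb{Q}$-factorial case it applies only the \emph{first} conclusion of Theorem~\ref{thm_alb} on the cover (which needs just klt), obtaining $K_{\widetilde{X}}+tL_{\widetilde{X}}\equiv E>0$. It then pushes $E$ down to $X$, observes that $\deg(\nu)\,(K_X+tL_X)$ is weakly numerically equivalent to $\nu_*E>0$, and uses that on a $\mathbb{Q}$-factorial variety weak numerical equivalence coincides with numerical equivalence (so the $\mathbb{Q}$-factoriality is used on $X$, where it is assumed, not on $\widetilde{X}$). Finally Proposition~\ref{prop-q-effective} converts this num-effectivity of $K_X+tL_X$ into ampleness. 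Your descent step ``ampleness of $\nu^*(K_X+tL_X)$ implies ampleness of $K_X+tL_X$'' is fine in principle, but you never legitimately obtain ampleness upstairs in the $\mathbb{Q}$-factorial branch.
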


\begin{proof}
Let $\pi:X'\to X$ be a quasi-\'etale cover such that $q(X')>0$, and $L_{X'}:=\pi^*L_X$.

First, we assume that $X$ has only canonical singularities. 
In this case, $X'$ also has only canonical singularities (cf.~\cite[Proposition 5.20]{KM98}).
By {\hyperref[thm_alb]{Theorem   \ref*{thm_alb}}}, our $K_{X'}+tL_{X'}$ and hence $K_X+tL_X$ are ample.

Second, we assume that $X$ is $\mathbb{Q}$-factorial.
In this case, applying \cite[Proposition 5.20]{KM98} again, our $X'$  has only klt singularities. 
Then, it follows from {\hyperref[thm_alb]{Theorem   \ref*{thm_alb}}} that our $K_{X'}+tL_{X'}\equiv E>0$ for some $t\gg 1$. 
So the push-down $\deg(\pi)\cdot(K_X+tL_X)$ is weakly numerically equivalent to $\pi_*(E)>0$ (cf.~\cite[Definition 2.2]{MZ18}). 
Since $X$ is $\mathbb{Q}$-factorial, weak numerical equivalence of divisors coincide with numerical equivalence (cf.~\cite[Lemma 3.2]{Zha16}); hence $K_X+tL_X\equiv \frac{1}{\deg(\pi)}\pi_*(E)$. 
By {\hyperref[prop-q-effective]{Proposition  \ref*{prop-q-effective}}}, our corollary follows.
\end{proof}

\section{Weak Calabi-Yau threefolds, Proof of \texorpdfstring{{\hyperref[main_theorem_Goren_ter_3fold]{Theorem  \ref*{main_theorem_Goren_ter_3fold} (3)}}}{text}}\label{sec-WCY}
The whole section is devoted to the proof of  {\hyperref[main_theorem_Goren_ter_3fold]{Theorem  \ref*{main_theorem_Goren_ter_3fold} (3)}}.  
To be more precise, we show that:
\begin{thm}[{cf.~{\hyperref[main_theorem_Goren_ter_3fold]{Theorem   \ref*{main_theorem_Goren_ter_3fold} (3)}}, \cite[Theorem 4.4]{Ser95}}]\label{thm-singular-cy}
Let $X$ be a normal projective variety with at worst terminal singularities, and $L_X$ a strictly nef divisor on $X$. 
If $\kappa(X)=0$ and $L_X\cdot c_2(X)\neq 0$, then $K_X+tL_X$ is ample for $t\gg 1$.
\end{thm}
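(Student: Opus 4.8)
The plan is to reduce to a weak Calabi--Yau threefold with $K_X\sim_{\mathbb{Q}}0$ and then, by Riemann--Roch, to produce a non-zero effective divisor numerically equivalent to a multiple of $L_X$, after which {\hyperref[prop-klt-eff-ample]{Proposition \ref*{prop-klt-eff-ample}}} forces $K_X+tL_X$ to be ample; here $X$ is a threefold, as throughout this section. First I would dispose of the easy cases: since $\kappa(X)=0$ we have $mK_X\sim D\geqslant 0$ for some $m$, and if $D\neq 0$ then $K_X$ is numerically equivalent to the non-zero effective divisor $\frac{1}{m}D$, whence the claim follows from {\hyperref[prop-q-effective]{Proposition \ref*{prop-q-effective}}} (or {\hyperref[prop-klt-eff-ample]{Proposition \ref*{prop-klt-eff-ample}(ii)}}, as $X$ is canonical with empty boundary); thus we may assume $K_X\sim_{\mathbb{Q}}0$. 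If $q^{\circ}(X)>0$ then {\hyperref[coro_q0>0]{Corollary \ref*{coro_q0>0}}} applies, so we may also assume $q^{\circ}(X)=0$, i.e.\ $X$ is weak Calabi--Yau. Passing to the global index-$1$ cover of $X$ --- a quasi-\'etale cover, hence again terminal with $q^{\circ}=0$, and along which ampleness and the hypotheses ($L_X^3=0$, $L_X\cdot c_2\neq 0$) descend --- I may further assume $K_X\sim 0$, so that $X$ is Gorenstein and $\chi(\mathcal{O}_X)=0$. Now suppose for contradiction that $K_X+tL_X$ is not ample for $t\gg 1$; by {\hyperref[lem_strict_nef_k]{Lemma \ref*{lem_strict_nef_k}}}, {\hyperref[lem-big-ample]{Lemma \ref*{lem-big-ample}}} and {\hyperref[lem-not-big-some]{Lemma \ref*{lem-not-big-some}}} it is strictly nef but not big, and since $K_X\equiv 0$ this forces $L_X^3=0$. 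I also replace $L_X$ by a Cartier multiple.

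The heart of the argument is the following. Since $X$ is a Gorenstein terminal threefold with $K_X\sim 0$ and $L_X^3=0$, Riemann--Roch gives
$$\chi(X,\mathcal{O}_X(mL_X))=\frac{m^3}{6}L_X^3+\frac{m}{12}L_X\cdot c_2(X)+\chi(\mathcal{O}_X)=\frac{m}{12}\,L_X\cdot c_2(X),$$
and by Miyaoka's pseudo-effectivity of the second Chern class of a minimal threefold ($K_X$ being nef here; pass to a $\mathbb{Q}$-factorialization if needed) we get $L_X\cdot c_2(X)\geqslant 0$, hence $L_X\cdot c_2(X)>0$ by hypothesis, so $\chi(X,\mathcal{O}_X(mL_X))\to+\infty$. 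It therefore suffices to see that $h^0(X,\mathcal{O}_X(mL_X))\neq 0$ for some $m>0$. As $L_X$ is strictly nef, $h^3(X,\mathcal{O}_X(mL_X))=h^0(X,\mathcal{O}_X(-mL_X))=0$ for $m>0$, so I only need $h^2(X,\mathcal{O}_X(mL_X))<\chi(X,\mathcal{O}_X(mL_X))$; in fact I claim $h^2(X,\mathcal{O}_X(mL_X))=0$. This uses that $L_X^2\not\equiv 0$: one rules out $L_X^2\equiv 0$ by restricting $L_X$ to a general very ample surface $H$ (smooth by Bertini, with $K_H\equiv H|_H$ ample) and combining the surface case ({\hyperref[main_thm_surface]{Proposition \ref*{main_thm_surface}}}) with Riemann--Roch on $H$ to exclude a strictly nef divisor of self-intersection zero on $H$. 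Granting $L_X^2\not\equiv 0$, we have $L_X^2\cdot H>0$ for every ample $H$ (the class $L_X^2$ being non-zero and pseudo-effective), so for general very ample $H$ the restriction $L_X|_H$ is nef and big on the smooth surface $H$; restricting the ideal sheaf sequence of $H$ twisted by $\mathcal{O}_X(-mL_X)$ and applying Serre duality together with Kawamata--Viehweg --- on $X$ to get $h^1(X,\mathcal{O}_X(-mL_X-H))=h^2(X,\mathcal{O}_X(mL_X+H))=0$ (as $mL_X+H$ is ample), and on $H$ to get $h^1(H,\mathcal{O}_H(-mL_X|_H))=h^1(H,\mathcal{O}_H(K_H+mL_X|_H))=0$ (as $mL_X|_H$ is nef and big) --- yields $h^2(X,\mathcal{O}_X(mL_X))=h^1(X,\mathcal{O}_X(-mL_X))=0$ for $m>0$, as claimed.

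Consequently $h^0(X,\mathcal{O}_X(mL_X))\geqslant\chi(X,\mathcal{O}_X(mL_X))>0$ for $m\gg 1$, so $mL_X\sim E$ with $E\geqslant 0$, and $E\neq 0$ by strict nefness of $L_X$. Then $K_X+tL_X$ is numerically equivalent to the non-zero effective divisor $\frac{t}{m}E$ (recall $K_X\equiv 0$), so {\hyperref[prop-klt-eff-ample]{Proposition \ref*{prop-klt-eff-ample}(i)}} (or {\hyperref[prop-q-effective]{Proposition \ref*{prop-q-effective}}}) shows $K_X+tL_X$ is ample for $t\gg 1$ --- contradicting the assumption, and proving the theorem.

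I expect the main obstacle to be establishing $L_X^2\not\equiv 0$, i.e.\ ruling out that a strictly nef divisor on a weak Calabi--Yau threefold has numerical dimension $1$: in contrast to the big case, strict nefness does not make $L_X$, nor its restriction to a general surface, big, so Kawamata--Viehweg is not directly available, and one is forced to combine the surface results with the fact that $K_X\equiv 0$ (which makes $K_H$ ample on a general very ample surface $H$). Pinning down the sign of $L_X\cdot c_2(X)$ through Miyaoka's theorem, and keeping the Riemann--Roch and Serre-duality bookkeeping clean --- the reason for passing to the Gorenstein index-$1$ cover --- are the other points that need care.
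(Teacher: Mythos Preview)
Your reductions in the first paragraph (to $K_X\sim 0$, $q^\circ(X)=0$, Gorenstein, $L_X^3=0$) agree with the paper's, and your Riemann--Roch computation $\chi(X,mL_X)=\frac{m}{12}L_X\cdot c_2(X)>0$ together with the vanishing $h^3=0$ is correct. The divergence is in how you extract a contradiction from this.

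The genuine gap is the step $L_X^2\not\equiv 0$. Your sketch asks to rule out a strictly nef divisor of self-intersection zero on a smooth surface $H$ with $K_H$ ample, but neither {\hyperref[main_thm_surface]{Proposition \ref*{main_thm_surface}}} nor Riemann--Roch on $H$ does this. Concretely: if $L_H:=L_X|_H$ is strictly nef with $L_H^2=0$, then for $m\gg 0$ one has $h^0(H,mL_H)=0$ (an effective strictly nef divisor with square zero is impossible) and $h^2(H,mL_H)=h^0(H,K_H-mL_H)=0$, so Riemann--Roch gives $-h^1(H,mL_H)=-\tfrac{m}{2}K_H\cdot L_H+\chi(\mathcal{O}_H)$, which simply says $h^1$ grows linearly --- no contradiction. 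Likewise, the ampleness of $K_H+tL_H$ from {\hyperref[main_thm_surface]{Proposition \ref*{main_thm_surface}}} only forces $(K_H+tL_H)^2=K_H^2+2tK_H\cdot L_H>0$, which is automatic. Without $L_X^2\not\equiv 0$ your Kawamata--Viehweg argument for $h^2(X,mL_X)=0$ collapses, and one is left only with $h^0(X,mL_X)+h^2(X,mL_X)>0$; if $h^0\equiv 0$ you cannot conclude.

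The paper sidesteps this entirely. Instead of seeking $h^0>0$, it exploits precisely the ``bad'' branch: from $\chi(-2L_X)<0$ and $h^0(2L_X)=h^3(-2L_X)=0$ one gets $H^1(X,-2L_X)\neq 0$, and then {\hyperref[lem-Ser-unitary]{Lemma \ref*{lem-Ser-unitary}}} (Serrano's extension argument) produces a linear representation of $\pi_1(X_{\textup{reg}})$ with non-abelian image. The contradiction comes from passing to a maximally quasi-\'etale cover (\cite[Theorem 1.5]{GKP16b}) and using \cite[Corollary 1.8]{Graf18} to see that, after a further \'etale cover, every linear representation of $\pi_1(X_{\textup{reg}})$ has abelian image. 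This fundamental-group input is the substitute for the vanishing you are trying to force, and it is exactly what your outline is missing.
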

Note that, for a normal projective variety which is of dimension $n$ and smooth in codimension two (this is the case when $X$ has only terminal singularities),  the second Chern class $\hat{c}_2(X)$ of the sheaf $\hat{\Omega}_X^1$ coincides with the birational second Chern class $c_2(X)$ (cf.~\cite[Proposition 1.1]{SW94} or \cite[Proof of Claim 4.11]{GKP16b}).
Here, the birational second Chern class $c_2(X)$ is defined to be the push-forward of the usual second Chern class $c_2(Y)$ of its smooth model $Y\to X$ (as $(n-2)$-cycles)  which is minimal in codimension two. 

In our present case $\kappa(X)=0$,  {\hyperref[main-conj-singular-arbitrary]{Question  \ref*{main-conj-singular-arbitrary}}} predicts that strict nefness is equivalent to ampleness, which relates to the abundance conjecture. 
We refer readers to \cite{LS20} for a recent progress on smooth threefolds in this direction.

To prove {\hyperref[thm-singular-cy]{Theorem  \ref*{thm-singular-cy}}}, we prepare the following lemma, as a natural extension of \cite[Proposition 3.7]{Ser95}. 
\begin{lemme}[{cf.~\cite[Proposition 3.7]{Ser95}}]\label{lem-Ser-unitary}
Let $X$ be a normal projective variety with at worst terminal singularities, and $L_X$ a strictly nef Cartier divisor on $X$.
Suppose that $\Coh^1(X,-2L_X)\neq 0$.
Then there exists a finite dimensional linear representation of the fundamental group   $\pi_1(X_{\textup{reg}})$ (where $X_{\textup{reg}}$ denotes the regular locus of $X$) whose image is not  abelian.
\end{lemme}

\begin{proof}
If $\dim X\le 2$, then $X$ is smooth and hence our lemma follows from \cite[Proposition 3.7]{Ser95}. 
Therefore, we may assume in the following that  $n=\dim X\ge 3$. 
We shall reduce the proof to the smooth surface, and then our lemma can be deduced from \cite[Proposition 3.7]{Ser95}. 

Let $A$ be a very ample divisor on $X$ and take $H_1,\cdots,H_{n-2}$ to be general hypersurfaces in the linear system $|A|$.
Let $S=H_1\cap \cdots\cap H_{n-2}$ be the complete intersection surface cut out by $H_1,\cdots, H_{n-2}$. 
Terminal varieties being smooth in codimension two (cf.~\cite[Corollary 5.18]{KM98}), our $S$ is smooth. 
Since $X_{\textup{reg}}$ has the same homotopy type of the space obtained from $H_1\cap X_{\textup{reg}}$ by attaching cells of dimension $\ge \dim X$ and the fundamental group of a CW complex only depends on its 2-skeleton, we have $\pi_1(X_{\textup{reg}})\cong\pi_1(H_1\cap X_{\textup{reg}})$.
Iterating this argument and noting that $S\cap X_{\textup{reg}}=S$, we see that $\pi_1(S)\cong \pi_1(X_{\textup{reg}})$. 

In addition, we have the following natural exact sequence
$$\Coh^1(X,-2L_X-H_1)\to \Coh^1(X,-2L_X)\to \Coh^1(H_1,-2L_X|_{H_1})\to\Coh^2(X,-2L_X-H_1).$$
By the Kodaira vanishing theorem, we have $\Coh^1(X,-2L_X-H_1)=\Coh^2(X,-2L_X-H_1)=0$ noting that $\dim X\ge 3$.
Hence, $\Coh^1(H_1,-2L_X|_{H_1})\cong\Coh^1(X,-2L_X)\neq 0$ by our assumption.
Inductively, we see that $\Coh^1(S,-2L_X|_S)\neq 0$. 
So we reduce the proof to the smooth surface case, and our lemma then follows from \cite[Proposition 3.7]{Ser95}.
\end{proof}

Now we begin to prove our main theorem of this section.
\begin{proof}[Proof of {\hyperref[thm-singular-cy]{Theorem  \ref*{thm-singular-cy}}} (and   {\hyperref[main_theorem_Goren_ter_3fold]{Theorem  \ref*{main_theorem_Goren_ter_3fold} (3)}})] In view of  {\hyperref[lem-big-ample]{Lemma   \ref*{lem-big-ample}}}, we
 may assume by contradiction that  $L_X$ is strictly nef but not big (and hence $L_X^3=0$). 
Applying {\hyperref[prop-klt-eff-ample]{Proposition  \ref*{prop-klt-eff-ample}}} and  {\hyperref[main_theorem_Goren_ter_3fold]{Theorem   \ref*{main_theorem_Goren_ter_3fold} (2)}} (cf.~{\hyperref[rmk-pf-mainthm]{Remark   \ref*{rmk-pf-mainthm} (2)}}),  
one can  reduce the proof to the case when $K_X\sim_\mathbb{Q} 0$ and $q^\circ(X)=0$, i.e., $X$ is a weak Calabi-Yau terminal threefold.  
Similarly, we can further assume that $L_X$ is not  numerically equivalent to any effective divisor (cf.~{\hyperref[prop-klt-eff-ample]{Proposition  \ref*{prop-klt-eff-ample}}}).  
Then it is clear that the Euler characteristic $\chi(\mathcal{O}_X)=\chi(K_X)=0$ by applying the Riemann-Roch formula and the Serre duality. 
By Miyaoka's pseudo-effectivity of $c_2(X)$ (cf.~\cite{Miy87}), we see that $L_X\cdot c_2(X)\ge 0$.
This together with our assumption implies $L_X\cdot c_2(X)>0$.
Since $K_X\sim_\mathbb{Q} 0$, after passing $X$ to its global index one cover, we may further assume that $K_X\sim 0$. 
Let $\gamma:\widetilde{X}\to X$ be a Galois, maximally quasi-\'etale cover, as constructed in \cite[Theorem 1.5]{GKP16b}; in other words, any finite \'etale cover of the regular locus $\widetilde{X}_{\textup{reg}}$ extends to a finite \'etale cover of $\widetilde{X}$ (as one of the equivalent conditions in \cite[Theorem 1.5]{GKP16b}). 
Since $\dim \widetilde{X}=3$ and $q^\circ(\widetilde{X})=q^\circ(X)=0$, it follows from \cite[Corollary 1.8]{Graf18} that $\pi_1(X)$ is finite (and hence virtually abelian). 
Therefore, replacing $\widetilde{X}$ by a further \'etale Galois cover, we may assume that $\pi_1(\widetilde{X})$ is abelian with  $\chi(\mathcal{O}_{\widetilde{X}})=q^\circ(\widetilde{X})=0$, noting that any \'etale cover of $\widetilde{X}$ also satisfies the conclusion of \cite[Theorem 1.5]{GKP16b}.

\textbf{We claim that any finite dimensional linear representation of $\pi_1(\widetilde{X}_{\textup{reg}})$ has abelian image.} Indeed, let $\rho:\pi_1(\widetilde{X}_{\textup{reg}})\to \textup{GL}_r(\mathbb{C})$ be such a finite dimensional linear representation of $\pi_1(\widetilde{X}_{\textup{reg}})$. 
Then it follows from \cite[Section 8.1, Proof of Theorem 1.14 and (8.0.5)]{GKP16b} that there exists a representation $\rho':\pi_1(\widetilde{X})\to \textup{GL}_r(\mathbb{C})$ such that $\rho'\circ\iota_*=\rho$ where $\iota_*:\pi_1(\widetilde{X}_{\textup{reg}})\twoheadrightarrow\pi_1(\widetilde{X})$ is induced by the natural inclusion (cf.~\cite[\S 0.7 (B)]{FL81}), noting that $\widetilde{X}$ is normal.
Therefore, $\textup{im}\rho=\textup{im}\rho'=\rho'(\pi_1(\widetilde{X}))$ is abelian and our claim is thus proved.

Now we shall finish the proof of our theorem by deducing a contradiction. 
Let $L_{\widetilde{X}}:=\gamma^*L_X$  and denote by $c_2(\widetilde{X})$ the birational second Chern class of $\widetilde{X}$, which coincides with $\gamma^*c_2(X)$ (cf. \cite[Proposition 4.6]{GKP16b}).   
By the construction,  $\chi(\mathcal{O}_{\widetilde{X}})=q^\circ(\widetilde{X})=0$,  $c_2(\widetilde{X})\cdot L_{\widetilde{X}}>0$,  $K_{\widetilde{X}}\sim 0$ and $\widetilde{X}$ has at worst terminal singularities (cf.~\cite[Proposition 5.20]{KM98}). 
Then it follows from the Serre duality that  $\textup{h}^2(\widetilde{X},\mathcal{O}_{\widetilde{X}})=0$ and $\textup{h}^3(\widetilde{X},\mathcal{O}_{\widetilde{X}})=1$.
On the one hand, by the Riemann-Roch formula, 
$$\chi(\widetilde{X},-2L_{\widetilde{X}})=-\frac{1}{6}L_{\widetilde{X}}\cdot c_2(\widetilde{X})<0,$$
noting that $L_{\widetilde{X}}^3=0$ since $L_{\widetilde{X}}$ is not big as we assumed in the beginning. 
On the other hand, as a consequence of  {\hyperref[prop-klt-eff-ample]{Proposition  \ref*{prop-klt-eff-ample}}} and Serre duality, we have 
$\Coh^3(\widetilde{X},-2L_{\widetilde{X}})=\Coh^0(\widetilde{X},2L_{\widetilde{X}})=0$, for otherwise, $L_{\widetilde{X}}$ would be numerically equivalent to an effective divisor and hence $L_{\widetilde{X}}$ is ample. 
Therefore, $\Coh^1(\widetilde{X},-2L_{\widetilde{X}})\neq 0$ and then 
 it follows from {\hyperref[lem-Ser-unitary]{Lemma   \ref*{lem-Ser-unitary}}} that there exists a linear representation of $\pi_1(\widetilde{X}_{\textup{reg}})$ whose image is not abelian, a contradiction to our claim.
\end{proof}

We end up this section with the following question on the weak Calabi-Yau fourfold case.
We note that, when $X$ is smooth, this question has a positive answer as shown in a recent joint paper \cite[Theorem 1.2]{LM21}  by Liu and Matsumura.

\begin{ques}
Let $X$ be a weak Calabi-Yau fourfold (in the sense that $K_X\sim_\mathbb{Q} 0$ and $q^\circ(X)=0$), and $L_X$ a strictly nef $\mathbb{Q}$-Cartier divisor on $X$.
Is $L_X$ ample?
\end{ques}

\section{The first step of the MMP}
In this section, 
we shall run the first step of the MMP for the proof of {\hyperref[main_theorem_Goren_ter_3fold]{Theorem   \ref*{main_theorem_Goren_ter_3fold} (4)}}  (cf.~{\hyperref[remark_conclude_excep_tocurve]{Remark   \ref*{remark_conclude_excep_tocurve}}}  and {\hyperref[thm_3fold_surface_curve]{Theorem    \ref*{thm_3fold_surface_curve}}}).

We begin this section with the lemma below, as  a special  case of \cite[Theorem 2.2]{Del14} which 
extends  \cite[Theorem 4]{Cut88} to the  isolated canonical singularities so as to avoid  small contractions.

\begin{lemme}[{cf.~\cite[Theorem 2.2]{Del14}}]\label{lem_del14}
Let $X$ be a $\mathbb{Q}$-factorial Gorenstein normal projective threefold with only isolated  canonical singularities. 
Let $\varphi:X\to Y$ be a birational $K_X$-negative contraction  of an extremal face $R$ whose fibres are at most one-dimensional.
%Suppose, moreover, that the exceptional locus of $\varphi$ is contained in the Gorenstein locus $G$ of $X$.
Then the following assertions hold.
\begin{enumerate}
\item[(1)] 
The exceptional locus $E:=\textup{Exc}(\varphi)$ is a disjoint union of prime divisors, 
$\varphi$ is a composition of divisorial contractions mapping exceptional divisors onto curves, and $Y$ has only isolated canonical singularities.
\item[(2)] There exists a finite subset $T\subseteq Y$  such that $Y\backslash T\subseteq Y_{\textup{reg}}$,  $\text{codim}~\varphi^{-1}(T)\ge 2$, $X\backslash \varphi^{-1}(T)\subseteq X_{\textup{reg}}$ and 
$$\varphi|_{X\backslash\varphi^{-1}(T)}:X\backslash \varphi^{-1}(T)\to Y\backslash T$$
is the simultaneous blow-up of   smooth curves.
In particular, $K_X=\varphi^*(K_Y)+\sum E_i$ with $E_i$ being exceptional, and $\textup{Sing}~(\sum E_i)\subseteq \varphi^{-1}(\textup{Sing}~\sum\varphi(E_i))$.
\item[(3)] Let $f\subseteq X$ be an irreducible curve such that $[f]\in R$. Then
$K_X\cdot f=E\cdot f=-1$.
\end{enumerate}
\end{lemme}

Based on {\hyperref[lem_del14]{Lemma  \ref*{lem_del14}}}, we establish the following lemma on an elementary contraction (of an extremal ray), which is a key to reduce the isolated canonical singularities to terminal singularities via the terminalization.

\begin{lemme}\label{lem_canonical_terminal}
Let $X$ be a normal projective threefold with only isolated $\mathbb{Q}$-factorial Gorenstein canonical singularities. 
Suppose that $\pi:X\to Y$ is a birational elementary  contraction of a $K_X$-negative extremal ray.
Then we have the following commutative diagram
\[\xymatrix{\widetilde{X}\ar[r]^{\widetilde{\pi}}\ar[d]_{\tau_1}&\widetilde{Y}\ar[d]^{\tau_2}\\
X\ar[r]_{\pi}&Y
}
\]
such that the following assertions hold.
\begin{enumerate}
\item[(1)] $\tau_1$ is a (crepant) terminalization and $\widetilde{X}$ has only $\mathbb{Q}$-factorial Gorenstein terminal singularities.
\item[(2)] $\widetilde{\pi}$ is an elementary  contraction of a $K_{\widetilde{X}}$-negative extremal ray, which is   divisorial.
\item[(3)] $Y$ has at worst $\mathbb{Q}$-factorial isolated canonical singularities.
\end{enumerate}
Suppose further that either $\pi$ maps the exceptional divisor $E:=\textup{Exc}(\pi)$ to a curve, or $\widetilde{\pi}$ maps the exceptional divisor $\widetilde{E}:=\textup{Exc}(\widetilde{\pi})$ to a single point.
Then we further have:
\begin{enumerate}
\item[(4)] $\widetilde{E}\cap\textup{Exc}(\tau_1)=\emptyset$ and $\widetilde{E}=\tau_1^{-1}(E)\cong E$;
\item[(5)]  $\tau_2$ is also crepant; and in particular, 
\item[(6)] if $\pi(E)$ is a curve, then $X$ is the blow-up of $Y$ along a local complete intersection curve $C$, $Y$ is  Gorenstein which is smooth around $C$, and $E$ is  Cartier; moreover, $\pi_*(-E|_E)=\pi(E)=C$.
\end{enumerate}
\end{lemme}

\begin{proof}
Let $\tau_1:\widetilde{X}\to X$ be a $\QQ$-factorial terminalization (cf. \cite[Corollary 1.4.3]{BCHM10}, by which $\tau_1$ is crepant), then $\widetilde{X}$ has only $\mathbb{Q}$-factorial Gorenstein terminal singularities.
Let $\widetilde{C}\subseteq\widetilde{X}$ be an irreducible curve such that $\tau_1(\widetilde{C})$ is $\pi$-contracted.
Then $K_{\widetilde{X}}\cdot \widetilde{C}=K_X\cdot\tau_1(\widetilde{C})<0$.
Therefore, there is an extremal ray $\widetilde{R}\in\overline{\textup{NE}}(\pi\circ\tau_1)\subseteq\overline{\textup{NE}}(\widetilde{X})$ such that $K_{\widetilde{X}}\cdot\widetilde{R}<0$.
Let $\widetilde{\pi}:\widetilde{X}\to \widetilde{Y}$ be the  contraction of $\widetilde{R}$.
By the rigidity lemma (cf.~\cite[Lemma 1.15, pp.~12-13]{Deb01}) $\pi\circ\tau_1$ factors through $\widetilde{\pi}$.
So $\widetilde{\pi}$ is birational and thus divisorial (see \cite[Theorem 4]{Kaw84} and \cite[Theorem 0]{Ben85}; cf.~\cite[Introduction]{Cut88}). 
Hence, (1) and (2) are proved. 
By {\hyperref[lem_del14]{Lemma  \ref*{lem_del14}}}, $\pi$ is  divisorial; hence (3) follows from \cite[Proposition 3.36]{KM98}, noting that non-terminal points of $Y$  are contained in the images of the non-terminal points of $X$ (cf. \cite[Corollary 3.43, pp.~104-105]{KM98}).
Let $\widetilde{F}$ be any curve contracted by $\widetilde{\pi}$ and set $F:=\tau_1(\widetilde{F})$.
Then $K_X\cdot F=K_{\widetilde{X}}\cdot\widetilde{F}<0$ implies that $F$ is still a curve and contracted by $\pi$.
%Hence, $(\tau_1)_*(\widetilde{R})$ is the extremal ray on $X$ contracted by $\pi$. 
Hence, $\tau_1(\widetilde{E})\subseteq E$, but $E$ being irreducible (since $\pi$ is elementary), this implies that $\tau_1(\widetilde{E})=E$.

From now on, we assume that either $\pi(E)$ is a curve or $\widetilde{\pi}(\widetilde{E})$ is a single point.
\textbf{We claim that $E$ does not contain any non-terminal point of $X$.}
Suppose the contrary.
Since $X$ is $\mathbb{Q}$-factorial, it follows from \cite[Corollary 2.63]{KM98} that $\textup{Exc}(\tau_1)$ is of pure codimension one.
Therefore, $\tau_1^*(E)=\widetilde{E}+F$ with $F$ being $\tau_1$-exceptional, noting that $\widetilde{E}$ is an irreducible divisor since $\widetilde\pi$ is elementary by (2).
Since $\widetilde{X}$ is also $\mathbb{Q}$-factorial, we take a curve $B$ in $\widetilde{E}\cap F$. Then from $K_{\widetilde{X}}\cdot B=\tau_1^*K_X\cdot B=0$ we see that $B$ is not contracted by $\widetilde\pi$, hence $\widetilde\pi(\widetilde B)$ is a curve and since $\widetilde E$ is irreducible we must have $\widetilde\pi(\widetilde E)=\widetilde\pi(B)$. 
If $\widetilde{E}$ is mapped onto a single point, then so is $B$, a contradiction.
If $E$ is mapped onto a curve, 
%Then with a similar argument, 
%$\widetilde{\pi}(\widetilde{E})=\widetilde{\pi}(B)$ is  a curve (i.e., $B$ is not $\widetilde{\pi}$-contracted); 
then $\widetilde{\pi}(\widetilde{E})=\widetilde{\pi}(B)$ is  contracted by $\tau_2$, implying that $\tau_2(\widetilde{\pi}(\widetilde{E}))$ is  a single point on $Y$, contradicting  the assumption that $\pi(E)$ is a curve.
So our claim holds and (4) is proved.
(5) follows from the the diagram (for some non-negative number $a$):
$$K_{\widetilde{Y}}=\widetilde{\pi}_*(K_{\widetilde{X}}-a\widetilde{E})=\widetilde{\pi}_*\tau_1^*\pi^*K_Y=\tau_2^*K_Y.$$
Now, we show (6). 
If $C:=\pi(E)$ is a curve, so is $\widetilde{\pi}(\widetilde{E})$; hence $K_{\widetilde{Y}}$ is Cartier
(cf.~\cite[Lemma 3]{Cut88}).
Since $\widetilde{E}$ is disjoint with $\textup{Exc}(\tau_1)$, we see that $\widetilde{X}$ and $X$ are isomorphic around $\widetilde{E}$, and hence $X$ is terminal around $E$.
Following from \cite[Corollary 3.43, pp.~104-105]{KM98}, $Y$ is also terminal around the image $C=\pi(E)$.
In addition, the restrictions $\widetilde{\pi}|_{\widetilde{E}}$ and $\pi|_E$ are the same contraction; therefore, $\widetilde{Y}$ and $Y$ are isomorphic around $\widetilde{C}$. 
By \cite[Theorem 4]{Cut88},  $X$ is the blow-up of a local complete intersection curve on $Y$ and $Y$ is smooth around $C$. 
Since $K_{\widetilde{Y}}=\tau_2^*K_Y$, our $K_Y$ is also Cartier: indeed, since $\tau_2$ is birational, the function field $K(\widetilde{Y})$  coincides with $K(Y)$; hence the local defining  equation for $K_{\widetilde{Y}}$ on $\widetilde{Y}$ is also that for $K_Y$. 
Applying {\hyperref[lem_del14]{Lemma  \ref*{lem_del14} (2)}} to $\pi$, we see that $E$ is also Cartier. 
Then  %$\mathcal{O}_X(-E)|_E=\mathcal{I}_E/\mathcal{I}_E^2|_E\cong\mathcal{O}_E(1)$, where $\mathcal{I}_E=\mathcal{O}_X(-E)$.
 $\mathcal{O}_X(-E)|_E\cong\mathcal{O}_E(1)$. 
Since $-E|_E$ is $\pi|_E$-ample, we have   $q(-E|_E)+G\sim H_E$ for some $q>0$, some fibre $G$ of $\pi|_E:E\to C$, and some very ample curve $H_E$ on $E$.
By the intersection theory and {\hyperref[lem_del14]{Lemma  \ref*{lem_del14}}}, $\deg (H_E\to C)=q$, so $(6)$ is proved.
\end{proof}

\begin{rmq}
If $\pi$ contracts $E$ to a point in {\hyperref[lem_canonical_terminal]{Lemma  \ref*{lem_canonical_terminal}}}, then  {\hyperref[lem_canonical_terminal]{Lemma  \ref*{lem_canonical_terminal} (4)}} may not be true, since $\widetilde{\pi}$ in this case will  possibly contract $\widetilde{E}$ to a curve, and then $\widetilde{E}$ is the blow-up of $E$ along some point; in other words, $E$ may contain some non-terminal point of $X$.
\end{rmq}

The following 
{\hyperref[contr_curve_elementary]{Proposition   \ref*{contr_curve_elementary}}} $\sim$ {\hyperref[prop_contr_surface_elementary]{Proposition   \ref*{prop_contr_surface_elementary}}} describe the first step of the MMP which is either a Fano contraction or a divisorial contraction mapping the exceptional divisor to a single point (cf.~{\hyperref[remark_conclude_excep_tocurve]{Remark  \ref*{remark_conclude_excep_tocurve}}}).
Considering {\hyperref[main-conj-singular-arbitrary]{Question    \ref*{main-conj-singular-arbitrary}}}, we shall make our arguments as general as possible.

\begin{prop}\label{contr_curve_elementary}
Let $(X,\Delta)$ be a projective klt threefold pair, and $L_X$ a strictly nef divisor on $X$.
Suppose  $\rho(X)\leqslant 2$ (this is the case when $X$ admits a $(K_X+\Delta)$-negative  elementary contraction to a curve).
Then $K_X+\Delta+tL_X$ is ample for  $t\gg 1$.
\end{prop}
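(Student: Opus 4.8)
The plan is to marry the ampleness criteria of Section~\ref{sec_pre} with the rigidity of a threefold of Picard number at most two. If $\rho(X)\le 1$, then any two Cartier divisors on $X$ are numerically proportional, so the nonzero nef divisor $L_X$ lies in the interior of the (at most one-dimensional) nef cone and is ample by Kleiman's criterion; hence $K_X+\Delta+tL_X$ is ample for $t\gg 1$ by {\hyperref[lem-big-ample]{Lemma \ref*{lem-big-ample}}}. So assume $\rho(X)=2$. By {\hyperref[lem_strict_nef_k]{Lemma \ref*{lem_strict_nef_k}}}, $K_X+\Delta+tL_X$ is strictly nef for $t\gg 1$; suppose, for a contradiction, that it is not ample for $t\gg 1$, hence not big by {\hyperref[lem-big-ample]{Lemma \ref*{lem-big-ample}}}. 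Then {\hyperref[lem-not-big-some]{Lemma \ref*{lem-not-big-some}}} supplies a nonzero class $\alpha\in\overline{\textup{ME}}(X)$ with $(K_X+\Delta)\cdot\alpha=L_X\cdot\alpha=0$, and in particular $L_X^3=0$. Since $\rho(X)=2$, the subspace $\alpha^{\perp}$ of the rank-two space $N^1(X)_{\mathbb{R}}$ is one-dimensional and contains both $K_X+\Delta$ and $L_X$; as $L_X\not\equiv 0$, we conclude $K_X+\Delta\equiv cL_X$ for a (unique) rational number $c$. This numerical proportionality is the heart of the argument.

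The next step is a trichotomy on the sign of $c$. If $c>0$, then $K_X+\Delta\equiv cL_X$ is pseudo-effective and not numerically trivial, so $\kappa(X,K_X+\Delta)\ge 1$ by the abundance theorem in dimension three (cf.\ \cite[Theorem~1.3]{Gon13}, \cite{Nak04}), and {\hyperref[kappa>=1]{Theorem \ref*{kappa>=1}}} makes $K_X+\Delta+tL_X$ ample --- a contradiction. If $c<0$, then $-(K_X+\Delta)\equiv|c|L_X$ is strictly nef, so by the threefold case of the singular Campana--Peternell problem (\cite[Theorem~D]{LOWYZ21}) the divisor $-(K_X+\Delta)$ is ample, whence $L_X$ is ample --- again a contradiction. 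There remains $c=0$, i.e.\ $K_X+\Delta\equiv0$, hence $K_X+\Delta\sim_{\mathbb{Q}}0$ by abundance; one may also assume $L_X$ is not numerically equivalent to a nonzero effective divisor, since otherwise {\hyperref[prop-klt-eff-ample]{Proposition \ref*{prop-klt-eff-ample}}}(i) already gives ampleness. If $\Delta\neq0$, then $\Delta\equiv-K_X$ is a nonzero effective divisor and, after a $\mathbb{Q}$-factorialization if needed, {\hyperref[prop-q-effective]{Proposition \ref*{prop-q-effective}}} shows $K_X+tL_X$ is ample for $t\gg 1$; then $L_X\equiv\frac{1}{t}\bigl((K_X+tL_X)+\Delta\bigr)$ is big, contradicting $L_X^3=0$. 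If $\Delta=0$, then $K_X\equiv0$, and the augmented irregularity decides: $q^{\circ}(X)>0$ yields ampleness through the Albanese morphism ({\hyperref[coro_q0>0]{Corollary \ref*{coro_q0>0}}}, {\hyperref[thm_alb]{Theorem \ref*{thm_alb}}}), while $q^{\circ}(X)=0$ makes $X$ weak Calabi-Yau.

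The main obstacle is precisely this last subcase: $\Delta=0$, $K_X\equiv0$, $q^{\circ}(X)=0$, where one must forbid a weak Calabi-Yau threefold from carrying a strictly nef, non-ample divisor --- exactly the exceptional situation left open by {\hyperref[main_theorem_Goren_ter_3fold]{Theorem \ref*{main_theorem_Goren_ter_3fold}}}, and {\hyperref[thm-singular-cy]{Theorem \ref*{thm-singular-cy}}} only helps when $L_X\cdot c_2(X)\neq0$. I expect to close it by using $\rho(X)=2$ directly: since $K_X\equiv0$ the cone $\overline{\textup{NE}}(X)$ has exactly two extremal rays, one of them orthogonal to the strictly nef $L_X$ and hence containing no curve; establishing that the other ray is contractible would produce a genuine $L_X$-trivial curve, which is absurd. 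Care is needed because $X$ need not be $\mathbb{Q}$-factorial --- forcing passage to small modifications on which $L_X$ only remains almost strictly nef in the sense of {\hyperref[defn-almost-sn]{Definition \ref*{defn-almost-sn}}} --- and because $q^{\circ}(X)$ is analysed via quasi-\'etale covers as in Section~\ref{sec-WCY}. Everything else is a formal consequence of {\hyperref[lem_strict_nef_k]{Lemmas \ref*{lem_strict_nef_k}}}, {\hyperref[lem-big-ample]{\ref*{lem-big-ample}}}, {\hyperref[lem-not-big-some]{\ref*{lem-not-big-some}}} together with the linear algebra of a two-dimensional N\'eron--Severi space.
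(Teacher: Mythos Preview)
The paper's proof is a three-line cone argument, not a case analysis. After invoking \cite[Theorem~D]{LOWYZ21} to set aside the case where $K_X+\Delta$ is numerically parallel to $L_X$ (and the trivial case $\rho(X)\le 1$), the classes $K_X+\Delta$ and $L_X$ span the two-dimensional space $N^1(X)_{\mathbb R}$, so some combination $a(K_X+\Delta)+bL_X$ is ample, and {\hyperref[lem-big-ample]{Lemma~\ref*{lem-big-ample}}} concludes. One checks that such a combination exists with $a,b\ge 0$ (as {\hyperref[lem-big-ample]{Lemma~\ref*{lem-big-ample}}} requires): if $L_X$ is not already ample then $L_X\cdot R_1=0$ for one of the two extremal rays of $\overline{\textup{NE}}(X)$; strict nefness forbids $R_1$ from containing a curve, so the cone theorem gives $(K_X+\Delta)\cdot R_1\ge 0$, with strict inequality by non-parallelism; hence $K_X+\Delta+tL_X$ is positive on both rays for $t\gg 1$. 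No appeal to {\hyperref[lem-not-big-some]{Lemma~\ref*{lem-not-big-some}}}, abundance, the Albanese, or the weak Calabi-Yau machinery is needed.

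Your route --- reducing via {\hyperref[lem-not-big-some]{Lemma~\ref*{lem-not-big-some}}} to the parallel case $K_X+\Delta\equiv cL_X$ and then running a trichotomy on $c$ --- is logically sound for $c\ne 0$, but the obstacle you honestly flag at $c=0$ (a strictly nef non-ample divisor on a log Calabi-Yau threefold with $\rho=2$) is real and is exactly the exception left open in {\hyperref[main_Goren_ter_3fold]{Corollary~\ref*{main_Goren_ter_3fold}}}. The paper's citation of \cite[Theorem~D]{LOWYZ21} for the parallel case strictly covers only $c<0$; what makes this harmless is that the proposition is only ever applied in the uniruled setting of Sections~6 and~7 with $\Delta=0$, where $K_X$ is not pseudo-effective and parallelism forces $c<0$. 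A further technical gap in your argument: in the $c=0$, $\Delta\ne 0$ subcase you invoke {\hyperref[prop-q-effective]{Proposition~\ref*{prop-q-effective}}} ``after a $\mathbb{Q}$-factorialization if needed'', but that proposition requires the divisor to be strictly nef on the $\mathbb{Q}$-factorial model, and the pullback of $L_X$ along a small modification is only almost strictly nef.
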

\begin{proof}
If view of  \cite[Theorem D]{LOWYZ21}, we may assume $\rho(X)=2$ and $K_X+\Delta$ is not parallel to $L_X$.  
Hence, the Mori cone $\overline{\textup{NE}}(X)$ has only two extremal rays. 
In particular, some linear combination $a(K_X+\Delta)+bL_X$ is strictly positive on $\overline{\textup{NE}}(X)\backslash \{0\}$ and thus ample (cf.~\cite[Theorem 1.18]{KM98}).
By {\hyperref[lem-big-ample]{Lemma  \ref*{lem-big-ample}}}, our proposition is proved.
\end{proof}

\begin{prop}\label{prop_Exc_surface_point}
Let $(X,\Delta)$ be a $\mathbb{Q}$-factorial normal  projective threefold with only  klt singularities and $L_X$ a strictly nef divisor.
Suppose there is a divisorial contraction $\pi:X\to X'$ of a $(K_X+\Delta)$-negative extremal ray, mapping the exceptional divisor to a point.	 
Then $K_X+\Delta+tL_X$ is ample for  $t\gg1$.
\end{prop}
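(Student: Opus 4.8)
The plan is to show that $K_X+\Delta+tL_X$ is big for $t\gg1$; the ampleness then follows from {\hyperref[lem-big-ample]{Lemma \ref*{lem-big-ample}}}. Suppose, for a contradiction, that $D:=K_X+\Delta+tL_X$ is not big for $t\gg1$. By {\hyperref[lem_strict_nef_k]{Lemma \ref*{lem_strict_nef_k}}} it is nonetheless strictly nef, and by {\hyperref[lem-not-big-some]{Lemma \ref*{lem-not-big-some}}} all the intersection numbers $(K_X+\Delta)^i\cdot L_X^{3-i}$ ($0\leqslant i\leqslant 3$) vanish; in particular $L_X^3=0$.

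First I would exploit the contracted divisor $E:=\textup{Exc}(\pi)$, which is a prime divisor (as $\pi$ is elementary) mapped by $\pi$ to a point. Every curve in $E$ is contracted, hence lies in the extremal ray, so $-E|_E$ is ample on the surface $E$. Using the splitting $N^1(X)=\pi^*N^1(X')\oplus\mathbb{R}[E]$ together with the fact that the restriction to $E$ of any pull-back from $X'$ is numerically trivial, one writes $L_X\equiv\pi^*M-s'E$ with $M:=\pi_*L_X$ and $s'>0$ (read off from a contracted curve), and $K_X+\Delta=\pi^*(K_{X'}+\Delta')+aE$ with $a>0$. Restricting to $E$ then gives $L_X|_E\equiv s'(-E|_E)$, which is ample on $E$, so $D|_E\equiv(ts'-a)(-E|_E)$ is ample on $E$ for $t\gg1$; in particular $D^2\cdot E=(D|_E)^2>0$.

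Next I would descend to $X'$, which is again $\mathbb{Q}$-factorial klt (\cite[Proposition 3.36]{KM98}). Restricting $\pi^*M=L_X+s'E$ to curves shows $M$ is strictly nef on $X'$, and the projection formula gives $M^3=L_X^3+(s')^3(-E|_E)^2=(s')^3(-E|_E)^2>0$, so $M$ is big; hence {\hyperref[lem-big-ample]{Lemma \ref*{lem-big-ample}}} applied to $(X',\Delta')$ shows that $A_t:=K_{X'}+\Delta'+tM$ is ample for $t\gg1$, and $D=\pi^*A_t-(ts'-a)E$.

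It remains to reach a contradiction, and here is where I expect the real work. Using the exact sequence $0\to\mathcal{O}_X(mD-E)\to\mathcal{O}_X(mD)\to\mathcal{O}_E(mD|_E)\to0$ (for $mD$ Cartier), the ampleness of $D|_E$ on $E$, and the bigness of $\pi^*M=\pi^*A_t$ up to the exceptional divisor, one should be able to show $\kappa(X,D)\geqslant1$. But a strictly nef $\mathbb{Q}$-divisor $D$ on a projective threefold with $\kappa(D)\geqslant1$ is automatically big: if $\kappa(D)=2$, restricting a resolved pull-back of $D$ to a general fibre curve of its Iitaka fibration would produce a divisor of degree $0$ on a curve, contradicting strict nefness; if $\kappa(D)=1$, the restriction to a general fibre surface is a nef divisor of Iitaka dimension $0$, hence of vanishing self-intersection, which forces $D^2$ — a limit of complete-intersection curves, hence a class in $\overline{\textup{ME}}(X)$ — to be supported on the fixed, proper exceptional locus, so $D^2\equiv0$, contradicting $D^2\cdot E>0$. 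Thus $\kappa(D)=3$, i.e.\ $D$ is big, a contradiction; so $K_X+\Delta+tL_X$ is ample for $t\gg1$. The main obstacle is precisely the step $\kappa(D)\geqslant1$: the naive section count along $E$ (or the pull-back of sections of $mM$ from $X'$) is numerically tight in the extreme case where $\pi$ blows up a smooth point and $(-E|_E)^2=1$, so one must control $H^1(X,mD-E)$; alternatively one may first dispose of the case where $K_X+\Delta$ is pseudo-effective via \cite[Corollary D]{LP20A}, as in the proof of {\hyperref[kappa>=1]{Theorem \ref*{kappa>=1}}}.
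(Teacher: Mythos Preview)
Your setup through the point where you prove $A_t=K_{X'}+\Delta'+tL_{X'}$ is ample on $X'$ is correct and matches the paper's argument essentially verbatim (with $s'=b$, $M=L_{X'}$). The divergence, and the gap, is what you do with this fact.

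You try to prove $\kappa(X,D)\geqslant 1$ via the restriction sequence to $E$ and a section count, and you yourself flag that this step is ``numerically tight'': indeed, writing $D=\pi^\ast A_t-cE$ with $c=ts'-a$, and using $(\pi^\ast A_t)|_E\equiv 0$, one gets $D^3=A_t^3-c^3E^3$; under the standing assumption $D^3=0$ this forces $A_t^3=c^3E^3$, so for the blow-up of a smooth point ($E^3=1$) the asymptotic section count $h^0(mA_t)\sim \tfrac{A_t^3}{6}m^3$ exactly matches the $\binom{mc+2}{3}\sim\tfrac{c^3}{6}m^3$ conditions imposed by vanishing to order $mc$ along $E$. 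Controlling $H^1(X,mD-E)$ does not obviously save this, and your fallback to \cite{LP20A} only covers the pseudo-effective case. Separately, your sketch that $\kappa(D)=1$ forces $D^2\equiv 0$ is not justified: from $(\widetilde D|_F)^2=0$ on a general fibre surface $F$ of the resolved Iitaka fibration you only get $\widetilde D^2\cdot\pi_2^\ast H=0$, not that the class $D^2$ vanishes against every divisor.

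The paper avoids all of this. Having the ample divisor $A_t$ on $X'$, it looks instead at the \emph{balanced} combination $D_X:=s'(K_X+\Delta)+aL_X$, which satisfies $D_X=\pi^\ast D_{X'}$ with $D_{X'}:=s'(K_{X'}+\Delta')+aL_{X'}$. Because $\pi^\ast D_{X'}|_E\equiv 0$ and all the mixed numbers $(K_X+\Delta)^i\cdot L_X^{3-i}$ vanish, the projection formula gives
\[
D_{X'}^2\cdot A_t \;=\; D_X^2\cdot(M_X+cE)\;=\;0,\qquad
D_{X'}\cdot A_t^2 \;=\; D_X\cdot(M_X+cE)^2\;=\;0,
\]
where $M_X=K_X+\Delta+tL_X$. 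By \cite[Lemma 3.2]{Zha16} this forces $D_{X'}\equiv 0$, hence $D_X\equiv 0$, i.e.\ $-(K_X+\Delta)\equiv\tfrac{a}{s'}L_X$ is strictly nef; then \cite[Theorem~D]{LOWYZ21} makes $-(K_X+\Delta)$ ample, and the conclusion follows. So the missing idea is not to chase $\kappa(D)$ at all, but to exploit that the specific linear combination $s'(K_X+\Delta)+aL_X$ descends to $X'$ and is forced to be numerically trivial there.
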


\begin{proof}
In view of {\hyperref[lem-big-ample]{Lemma  \ref*{lem-big-ample}}}, we may assume that $K_X+\Delta+tL_X$ is not big.
By {\hyperref[lem-not-big-some]{Lemma  \ref*{lem-not-big-some}}}, $(K_X+\Delta)^i\cdot L_X^{3-i}=0$ for any $0\leqslant i\leqslant 3$.
Write $(K_X+\Delta)-aE=\pi^*(K_{X'}+\Delta')$ and $L_X+bE=\pi^*L_{X'}$ with $L_{X'}$ being  $\mathbb{Q}$-Cartier  (cf.~\cite[Theorem 3-2-1]{KMM87}) and $E$ being  $\pi$-exceptional. 
Since $-E$ is $\pi$-ample (cf.~\cite[Lemma 2.62]{KM98}) and $L_X$ is strictly nef, our $b>0$. 
By the projection formula, $L_{X'}$ is strictly nef.
Note that 
$\pi^*(K_{X'}+\Delta'+tL_{X'})=(K_X+\Delta+tL_X)+(bt-a)E$;  
hence for  $t\gg 1$, our $K_{X'}+\Delta'+tL_{X'}$ is strictly nef.

We claim that $K_{X'}+\Delta'+tL_{X'}$ is big (and hence ample by {\hyperref[lem-big-ample]{Lemma  \ref*{lem-big-ample}}}). 
Suppose the claim for the time being.
%Then there exists a suitable integer $m$, such that $|m(K_{X'}+\Delta'+tL_{X'})|$ is base point free (cf.~\cite[Theorem 3.3]{KM98}).
Fixing some $m\gg 1$, we choose a  smooth member $A\in |m(K_{X'}+\Delta'+tL_{X'})|$ which is ample.
Let $D_X:=b(K_X+\Delta)+aL_X$ and $D_{X'}:=b(K_{X'}+\Delta')+aL_{X'}$.
Then  $D_X=\pi^*(D_{X'})$.
Since $E$ is mapped to a point, $D_X^2\cdot(K_X+\Delta+tL_X)=0$ implies $D_{X'}^2\cdot(K_{X'}+\Delta'+tL_{X'})=0$.
Similarly, $D_{X'}\cdot (K_{X'}+\Delta'+tL_{X'})^2=D_X\cdot (K_X+\Delta+tL_X+(bt-a)E)^2=0$.  
So $D_{X'}^2\cdot A=D_{X'}\cdot  A^2=0$.
By \cite[Lemma 3.2]{Zha16}, we have $D_{X'}\equiv 0$. 
Then, $D_X\equiv0$ and   $-(K_X+\Delta)$ is parallel to $L_X$, which is strictly nef.
As a result, $-(K_X+\Delta)$ is ample (cf.~\cite[Theorem D]{LOWYZ21}) and our result follows.

It remains to show the bigness of $K_{X'}+\Delta'+tL_{X'}$.
Set $M_X:=K_X+\Delta+tL_X$, $M_{X'}:=K_{X'}+\Delta'+tL_{X'}$ and $m:=bt-a$.
Then $\pi^*M_{X'}=M_X+mE$. 
Since $M_{X'}$ is nef and $\pi(E)$ is  a single point, by projection formula we have
\begin{align*}
	0\leqslant M_{X'}^3&=\pi^*M_{X'}^2\cdot (M_X+mE)=\pi^*M_{X'}\cdot  (M_X+mE)\cdot M_X\\
	&=\pi^*M_{X'}\cdot M_X^2+\pi^*M_{X'}\cdot mM_X|_E\\
	&=\pi^*M_{X'}\cdot M_X^2=(bt-a)(K_X+\Delta+tL_X)^2\cdot E
\end{align*}
%$$0\leqslant(K_{X'}+\Delta'+tL_{X'})^3=\pi^*(K_{X'}+\Delta'+tL_{X'})\cdot %(K_X+\Delta+tL_X)^2=(bt-a)(K_X+\Delta+tL_X)^2\cdot E.$$
%\xred{Here it seems that the calculation of the self-intersection is not correct. In effect, I think the first equality is not correct, because the projection formula gives:
%\begin{align*}
%M^3_{X'} &=M_{X'}\cdot\pi_\ast(\pi^\ast M_{X'}\cdot M_{X'})=\pi^\ast M_{X'}^2\cdot M_X=(M_X+mE)^2\cdot M_X \\
%&=2mE\cdot M_X^2+m^2E^2\cdot M_X
%\end{align*}
%here we set $M_{X'}:=K_{X'}+\Delta'+tL_{X'}$ and $M_X:=K_X+\Delta+tL_X$, $m:=bt-a$ (then $\pi^\ast M_{X'}\sim M_X+mE$, $\pi_\ast M_X\sim M_{X'}$). 
%}
%{\color{blue}\begin{align*}
%M^3_{X'} &=(\pi^*M_{X'})^3=\pi^*M_{X'}\cdot \pi^*M_{X'}\cdot(M_X+mE)\\
%&=\pi^*M_{X'}\cdot \pi^*M_{X'}\cdot M_X\\
%&=\pi^*M_{X'}\cdot (M_X+mE)\cdot M_X\\
%&=\pi^*M_{X'}\cdot M_X^2+\pi^*M_{X'}\cdot E\cdot M_X\\
%&=\pi^*M_{X'}\cdot M_X^2+\pi^*M_{X'}\cdot  (M_X|_E)
%\end{align*}
%The last summand is zero by the projection formula, since we can view $M_X|_E$ as a 1-cycle on E and hence it is also contracted by $\pi$, noting that $\pi(E)$ is a single point, right? So, this  calculation does not work for the case when $E$ is mapped to a curve, in which case $M_X|_E$ might dominate downstair and thus $\pi_*(M_X|_E)\neq 0$.}
In the above equality, $-E$ is $\pi$-ample and we regard $M_X|_E$ as a 1-cycle on $E$ (which is also $\pi$-contracted). 
Thus, $(L_X|_E)^2=L_X^2\cdot E=(\pi^*L_{X'}-bE)^2\cdot E=(-bE|_E)^2>0$.
By the  Nakai-Moishezon criterion (cf.~\cite[Theorem 1.37]{KM98}),  $L_X|_E$ is ample.
So the last item is positive for  $t\gg 1$.
In particular, $M_{X'}=K_{X'}+\Delta'+tL_{X'}$ is big (cf.~\cite[Proposition 2.61]{KM98}).
\end{proof}

\begin{prop}%(cf.~\cite[Proposition 4.2]{CCP08}
\label{prop_contr_surface_elementary}
Let $X$ be a $\mathbb{Q}$-factorial normal projective threefold with only isolated  klt singularities,  and $L_X$ a strictly nef $\mathbb{Q}$-divisor on $X$.
Suppose  there is an elementary  $K_X$-negative contraction  $\pi:X\to S$ onto a normal projective surface $S$.
Then $K_X+tL_X$ is ample for $t\gg 1$.
\end{prop}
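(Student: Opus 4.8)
The plan is to argue by contradiction. Suppose $K_X+tL_X$ is not ample for $t\gg1$. By {\hyperref[lem_strict_nef_k]{Lemma \ref*{lem_strict_nef_k}}} it is strictly nef for $t\gg1$, so by {\hyperref[lem-big-ample]{Lemma \ref*{lem-big-ample}}} it is nef but not big for some (hence all) large $t$; then {\hyperref[lem-not-big-some]{Lemma \ref*{lem-not-big-some}}} yields $K_X^i\cdot L_X^{3-i}=0$ for $0\le i\le3$ together with a class $0\neq\alpha\in\overline{\textup{ME}}(X)$ with $K_X\cdot\alpha=L_X\cdot\alpha=0$. The goal is to convert these vanishings, via the conic bundle structure of $\pi$, into a statement on the surface $S$ that contradicts either {\hyperref[main_thm_surface]{Proposition \ref*{main_thm_surface}}} (Serrano's conjecture for klt surfaces) or \cite[Theorem D]{LOWYZ21} (strict nefness of $-K_X$ forces ampleness).

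First I would record the structure of $\pi$. By the cone theorem $\pi$ is equidimensional; since $X$ is klt it is Cohen--Macaulay (\cite[Theorem 5.10]{KM98}), and because $X$ has only isolated singularities, $\pi$ is a genuine conic bundle over the complement of a finite subset $D_0\subseteq S$ containing $\textup{Sing}\,S$ and $\pi(\textup{Sing}\,X)$. A general fibre $F\cong\mathbb{P}^1$ satisfies $-K_X\cdot F=2$, and $\ell:=L_X\cdot F>0$ since $L_X$ is strictly nef. As $\rho(X/S)=1$, the $\mathbb{Q}$-divisor $2L_X+\ell K_X$ is numerically $\pi$-trivial, hence $2L_X+\ell K_X\equiv\pi^*D_S$ for a $\mathbb{Q}$-divisor $D_S$ on $S$. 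Writing $L_X=\tfrac12(\pi^*D_S-\ell K_X)$ and pushing forward, using $\pi_*K_X=(K_X\cdot F)[S]=-2[S]$, $\pi_*L_X=\ell[S]$, the vanishing of $\pi_*(\pi^*(-))$ on $2$-cycles, and {\hyperref[lem_conic_miy]{Lemma \ref*{lem_conic_miy}}} (so $\pi_*K_X^2\equiv-(4K_S+D_1)$, where $D_1$ is the one-dimensional discriminant), the relations $K_X^i\cdot L_X^{3-i}=0$ translate, via the projection formula, into
\begin{equation*}
D_S^2=0,\qquad (4K_S+D_1)\cdot D_S=0,\qquad \Lambda:=\pi_*L_X^2\equiv\ell D_S-\tfrac{\ell^2}{4}\,(4K_S+D_1),\qquad \Lambda\cdot D_S=0.
\end{equation*}
Here $\Lambda$ is nef on $S$, because $L_X$ is nef and $\Lambda\cdot C=L_X^2\cdot\pi^{-1}(C)\ge0$ for every curve $C\subseteq S$.

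Next I would split according to whether $\Lambda$ is big on $S$. If $\Lambda^2>0$, then since $\Lambda\cdot D_S=0$ the Hodge index theorem on the klt surface $S$ gives $D_S^2<0$ unless $D_S\equiv0$; as $D_S^2=0$ we get $D_S\equiv0$, whence $2L_X+\ell K_X\equiv0$ and $-K_X$ is a positive multiple of the strictly nef divisor $L_X$. By \cite[Theorem D]{LOWYZ21} then $-K_X$, and therefore $L_X$, is ample, contradicting our assumption. So we may assume $\Lambda^2=0$. When $\Lambda\not\equiv0$, the Hodge index theorem forces $D_S\equiv r\Lambda$ for some $r\in\mathbb{Q}$ (as $D_S\in\Lambda^{\perp}$ with $D_S^2=\Lambda^2=0$), and the structure relation becomes $(1-r\ell)\Lambda\equiv-\ell^2\bigl(K_S+\tfrac14D_1\bigr)$, i.e. $K_S+\tfrac14D_1\equiv c\Lambda$ for a scalar $c$; one then feeds this into the two-dimensional theory — applying {\hyperref[main_thm_surface]{Proposition \ref*{main_thm_surface}}} to a strictly nef $\mathbb{Q}$-divisor produced on $S$ (and using effectivity of $D_1$, together with $\pi_*\alpha$, which is a nonzero nef class on $S$ killed by $D_S$) — to obtain the final contradiction. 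The residual case $\Lambda\equiv0$ is handled in the same spirit: then $D_S\equiv\ell(K_S+\tfrac14D_1)$, so $L_X\equiv\tfrac{\ell}{2}(-K_{X/S})+\tfrac{\ell}{8}\pi^*D_1$, and restricting to a prime divisor $\pi^{-1}(H)\subseteq X$ for a general very ample $H$ on $S$ and invoking {\hyperref[prop_Q-Goren_surface]{Proposition \ref*{prop_Q-Goren_surface}}} for the birationally ruled surface $\pi^{-1}(H)$ (note $K_{\pi^{-1}(H)}=(K_X+\pi^*H)|_{\pi^{-1}(H)}$) together with the above vanishings forces $K_X+tL_X$ to be big after all. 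In every branch we reach a contradiction, so $K_X+tL_X$ is ample for $t\gg1$.

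The main obstacle is precisely this last step, the case where $\Lambda$ is nef but not big on $S$: there all numerical invariants of $X$ remain consistent with $K_X+tL_X$ being non-big (indeed $(K_X+tL_X)^3=0$ and $\pi_*((K_X+tL_X)^2)$ is nef but not big), so the contradiction must be imported from the surface theory. The delicate points are (i) producing a strictly nef $\mathbb{Q}$-divisor on the base $S$ on which to run {\hyperref[main_thm_surface]{Proposition \ref*{main_thm_surface}}} — one expects $\Lambda$, or $D_S$ corrected by a multiple of $\Lambda$, to work, but this requires excluding Mumford-type strictly nef non-big divisors on the ruled surfaces $\pi^{-1}(C)$; (ii) pinning down the coefficient $c$ and handling the discriminant divisor $D_1$ via the canonical bundle formula for the conic bundle; and (iii) the bookkeeping forced by the singularities of $X$ and $S$, which is controlled because $X$ has only isolated singularities, so $\pi$ is a genuine conic bundle away from a finite subset of $S$ and {\hyperref[lem_conic_miy]{Lemma \ref*{lem_conic_miy}}} already packages the formula $\pi_*K_X^2\equiv-(4K_S+D_1)$ that we need.
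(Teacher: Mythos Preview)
Your setup through the relations $D_S^2=0$, $\Lambda\cdot D_S=0$, and the conic-bundle formula is fine, and the case $\Lambda^2>0$ (equivalently, the case $D_S\equiv 0$) is correctly disposed of via \cite[Theorem D]{LOWYZ21}. The genuine gap is exactly where you locate it yourself: when $\Lambda=\pi_*L_X^2$ is nef but not big, you do not actually produce a contradiction. In that regime there is \emph{no} strictly nef $\mathbb{Q}$-divisor on $S$ waiting to be found---$D_S$, $\Lambda$, and all their linear combinations have square zero and vanish on $\pi_*\alpha$, so {\hyperref[main_thm_surface]{Proposition \ref*{main_thm_surface}}} cannot be invoked. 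Your fallback in the case $\Lambda\equiv0$, restricting to $\pi^{-1}(H)$ and appealing to {\hyperref[prop_Q-Goren_surface]{Proposition \ref*{prop_Q-Goren_surface}}}, cannot force $K_X+tL_X$ to be big either: the vanishings $K_X^i\cdot L_X^{3-i}=0$ already guarantee $(K_X+tL_X)^3=0$ for all $t$, so bigness is simply false under the standing contradiction hypothesis.

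The paper closes this gap by a completely different mechanism that you have not used at all: it first invokes {\hyperref[coro_q0>0]{Corollary \ref*{coro_q0>0}}} to reduce to $q^\circ(X)=0$, hence $q(S)=0$. Writing $K_X+uL_X=\pi^*M$ (your $D_S$ up to scaling), the movable class $\gamma:=\pi_*\alpha$ is a nonzero nef class on $S$ with $M\cdot\gamma=0$ and $M^2=0$, so Hodge index makes $M$ parallel to $\gamma$; thus $M_1:=\pm M$ is nef. The conic-bundle formula ({\hyperref[lem_conic_miy]{Lemma \ref*{lem_conic_miy}}}) is then used not to compute $\Lambda$, but to show $M-K_S$ is pseudo-effective, which yields $K_{\widetilde S}\cdot\widetilde{M_1}\le0$ on the minimal resolution $\widetilde S$. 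Now Riemann--Roch on $\widetilde S$, using $q(\widetilde S)=0$, gives $h^0(n\widetilde{M_1})\ge 1-h^2(n\widetilde{M_1})$, and an elementary boundedness argument kills $h^2$ for $n\gg1$. Hence $M_1$ is numerically equivalent to a nonzero effective divisor, so $\pm(K_X+uL_X)\equiv\pi^*M_1$ is num-effective on $X$, and {\hyperref[prop-q-effective]{Proposition \ref*{prop-q-effective}}} finishes. The two ingredients you are missing are the reduction to $q^\circ(X)=0$ and the Riemann--Roch effectivity argument on $S$; without them the $\Lambda^2=0$ case cannot be closed.
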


\begin{proof}
Let us assume that $L_X$ is Cartier after replacing $L_X$ by a multiple. 
By {\hyperref[coro_q0>0]{Corollary   \ref*{coro_q0>0}}}, {\hyperref[lem_strict_nef_k]{Lemma  \ref*{lem_strict_nef_k}}} and {\hyperref[lem-big-ample]{Lemma  \ref*{lem-big-ample}}}, we may assume $q^\circ(X)=0$, and $K_X+tL_X$ is nef but not big for  $t\gg 1$. 
%Then  $K_X^3=K_X^2\cdot L_X=K_X\cdot L_X^2=L_X^3=0$ (cf.~).
Note that we can further assume $K_X$ not parallel to $L_X$ (cf.~\cite[Theorem D]{LOWYZ21}).  
By the canonical bundle formula (cf.~e.g. \cite[Theorem 0.2]{Amb05}), $(S,\Delta_S)$ is a klt pair for some effective divisor $\Delta_S$ on $S$; thus $S$ is $\mathbb{Q}$-factorial (cf.~e.g.~\cite[Proposition 4.11]{KM98}).
Let $u:=\frac{-K_X\cdot f}{L_X\cdot f}>0$ (which is a rational number) where $f$ is a general fibre of $\pi$.
By the cone theorem (cf.~\cite[Theorem 3.7]{KM98}), there exists a $\mathbb{Q}$-Cartier divisor $M$ such that
$K_X+uL_X=\pi^*M$ with $M\not\equiv 0$.  
Then, {\hyperref[lem-not-big-some]{Lemma  \ref*{lem-not-big-some}}} gives us 
\begin{align*}
0=(K_X-\pi^*M)^3=-3K_X^2\cdot(K_X+uL_X)+3K_X\cdot(\pi^*M)^2=3(K_X\cdot f)M^2,
\end{align*}
which implies  $M^2=0$.  
Fix a rational number $t>6$ such that  $K_X+tL_X$ is strictly nef.  
By {\hyperref[lem-not-big-some]{Lemma  \ref*{lem-not-big-some}}},  there exists  $\alpha\in\overline{\textup{ME}}(X)$ such that $K_X\cdot\alpha=L_X\cdot \alpha=0$.
Then $M\cdot \gamma=0$ with $0\not\equiv\gamma:=\pi_*\alpha\in\overline{\textup{ME}}(S)$.
Since $\dim S=2$, our $\gamma$ is nef.
%Replacing $\gamma$ by a multiple $G$, we may assume that $G$ is an integral nef Cartier divisor satisfying $M\cdot G=0$. 
Since $M^2=0$ but $M\not\equiv0$, we have $\gamma^2=0$ and thus $\gamma$ is parallel to $M$ by the Hodge index theorem. 
%Let $H$ be an ample divisor on $S$. 
%Since $\gamma^2=M^2=0$ but neither of them are trivial, our $H\cdot \gamma\neq 0$ and $H\cdot M\neq 0$.
%Take $P:=(H\cdot \gamma)M-(H\cdot M)\gamma$. 
%Then $P^2=H\cdot P=0$.
%By the Hodge index theorem again, $P\equiv 0$ and $\gamma$ is parallel to $M$, which implies that 
Hence, either $M$  is nef or $-M$ is nef.
Denote by $M_1=M$ (resp. $-M$) if $M$ is nef (resp. $-M$ is nef). 
Replacing $M_1$ by a multiple, we may assume that $M_1$ is a line bundle.

\begin{claim}\label{claim_m-k-pseu}
$N:=M-K_S$	is pseudo-effective.
\end{claim}

\noindent
\textbf{Proof of  {\hyperref[claim_m-k-pseu]{Claim \ref*{claim_m-k-pseu}}}.}
For any irreducible curve $\ell\subseteq S$, it follows from {\hyperref[lem_conic_miy]{Lemma    \ref*{lem_conic_miy}}} that
\begin{align*}
 0\leqslant  u^2L_X^2\cdot\pi^*(\ell)&=(\pi^*M-K_X)^2\cdot\pi^*(\ell)=-2\pi^*M\cdot K_X\cdot \pi^*(\ell)	+K_X^2\cdot\pi^*(\ell)\\
 &=-2(K_X\cdot f)M\cdot \ell-(4K_S+D_1)\cdot \ell=(4N-D_1)\cdot \ell.
\end{align*}
Here,  $D_1$ denotes the one-dimensional part of the discriminant locus of  $\pi$.  
%Then any nef divisor $G\in\textup{Nef}(S)=\overline{\textup{ME}}(S)$, which is a limit of movable curves, also satisfies $(4N-D_1)\cdot G\geqslant 0$.
Then $4N-D_1$ is nef and hence
$N=\frac{1}{4}((4N-D_1)+D_1)$ is pseudo-effective.

\par \vskip 1pc \noindent

We come back to the proof of {\hyperref[prop_contr_surface_elementary]{Proposition   \ref*{prop_contr_surface_elementary}}}. 
Let $\tau:\widetilde{S}\to S$ be a minimal resolution, and  $\widetilde{M_1}:=\tau^*M_1$. % and $\widetilde{N}=\pi^*N$.
Since  $M_1\cdot M=0$ and $M_1\cdot N\geqslant 0$ (cf.~{\hyperref[claim_m-k-pseu]{Claim \ref*{claim_m-k-pseu}}}), our $K_{\widetilde{S}}\cdot \widetilde{M_1}=K_S\cdot M_1\leqslant 0$. 
Then, for any positive integer $n$, applying the Riemann-Roch formula to $n\widetilde{M_1}$ and noting that $q(\widetilde{S})=q(S)=q(X)=0$, we get the following inequality
\begin{align}\label{eq_RR}
\begin{split}
h^0(S,nM_1)&=h^0(\widetilde{S},n\widetilde{M_1})=-\frac{n}{2}\widetilde{M_1}\cdot K_{\widetilde{S}}+\chi(\mathcal{O}_{\widetilde{S}})+h^1(\widetilde{S},n\widetilde{M_1})-h^2(\widetilde{S},n\widetilde{M_1})\\
&\geqslant 1-h^2(\widetilde{S},n\widetilde{M_1}).
\end{split}
\end{align}

\textbf{We claim that $h^2(\widetilde{S},n\widetilde{M_1})=h^0(K_{\widetilde{S}}-n\widetilde{M_1})=0$ for $n\gg 1$.}  
Fixing an ample divisor $\widetilde{H}$ on $\widetilde{S}$, we have $\widetilde{M_1}\cdot \widetilde{H}>0$ by the Hodge index theorem (noting that $\widetilde{M_1}^2=M_1^2=0$).
For each $n$, if there is an effective divisor $Q_n$  on $\widetilde{S}$ such that $K_{\widetilde{S}}-n\widetilde{M_1}\sim Q_n$, then 
$$K_{\widetilde{S}}\cdot \widetilde{H}=(n\widetilde{M_1}+Q_n)\cdot\widetilde{H}\geqslant  n\widetilde{M_1}\cdot\widetilde{H}.$$
The left hand side of the above inequality being bounded, our claim is thus proved.

Therefore, it follows from {\hyperref[eq_RR]{Equation  (\ref*{eq_RR})}}  that $M_1$ is numerically equivalent to an effective $\mathbb{Q}$-divisor, which is non-zero by our assumption in the beginning of the proof.
This implies that 
 $K_X+uL_X$ (or $-K_X-uL_X$) is  numerically equivalent to  a non-zero effective divisor. 
By {\hyperref[prop-q-effective]{Proposition  \ref*{prop-q-effective}}}, our proposition is proved.
\end{proof}

The following theorem generalizes {\hyperref[prop_contr_surface_elementary]{Proposition   \ref*{prop_contr_surface_elementary}}} to the non-elementary case.

\begin{thm}\label{thm_contr_surface_conic}
Let $X$ be a $\mathbb{Q}$-factorial Gorenstein normal projective threefold with only isolated  canonical singularities,  and $L_X$ a strictly nef divisor on $X$.
Suppose that $X$ admits an equi-dimensional  $K_X$-negative contraction (of an extremal face) $\pi:X\to S$ onto a normal projective surface such that $K_X+uL_X=\pi^*M$ for some $u\in\mathbb{Q}$ and some $\mathbb{Q}$-Cartier divisor $M$ on $S$.
%Suppose further that $\pi$ is equi-dimensional.
Then $K_X+tL_X$ is ample for $t\gg 1$.
\end{thm}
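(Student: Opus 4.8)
The plan is to run the proof of Proposition \ref{prop_contr_surface_elementary} with the elementary conic bundle replaced by the equi-dimensional contraction $\pi$, the only genuinely new input being a version of the conic bundle formula of Lemma \ref{lem_conic_miy} for non-elementary equi-dimensional contractions. I begin with the standard reductions: by Lemma \ref{lem-big-ample} we may assume $K_X+tL_X$ is nef but not big for $t\gg 1$, and then Lemma \ref{lem-not-big-some} gives $K_X^i\cdot L_X^{3-i}=0$ for all $0\le i\le 3$ together with a class $0\ne\alpha\in\overline{\textup{ME}}(X)$ with $K_X\cdot\alpha=L_X\cdot\alpha=0$. Since $X$ is $\mathbb{Q}$-factorial and klt, Corollary \ref{coro_q0>0} lets us assume $q^\circ(X)=0$, hence $q(X)=0$; and by \cite[Theorem D]{LOWYZ21} we may assume $K_X$ is not numerically proportional to $L_X$, so in particular $M\not\equiv 0$. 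For a general fibre $f$ of $\pi$ one has $\pi^*M\cdot f=M\cdot\pi_*f=0$, so $(K_X+uL_X)\cdot f=0$; as $X$ is Gorenstein the general fibre is a conic with $K_X\cdot f=-2$, whence $u=2/(L_X\cdot f)>0$.

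Next I extract the numerical geometry on $S$. Using the projection formula, $K_X\cdot(\pi^*M)^2=(K_X\cdot f)\,M^2$, and the left side equals $K_X\cdot(K_X+uL_X)^2=0$ by the vanishings above, so $M^2=0$. Set $\gamma:=\pi_*\alpha\in\overline{\textup{ME}}(S)$. If $\gamma=0$ then $\alpha\in\overline{\textup{NE}}(X/S)\setminus\{0\}$, a pointed polyhedral cone (since $-K_X$ is $\pi$-ample) whose extremal rays are spanned by $\pi$-contracted rational curves, so strict nefness of $L_X$ would force $L_X\cdot\alpha>0$, a contradiction; hence $\gamma\ne 0$. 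As $\dim S=2$ the class $\gamma$ is nef and $M\cdot\gamma=\pi^*M\cdot\alpha=0$; were $\gamma^2>0$ then $\gamma$ would be big and nef, forcing $M\equiv 0$, which is excluded, so $\gamma^2=0$, and the Hodge index theorem on a resolution of $S$ gives $M$ proportional to $\gamma$. After rescaling, $M_1:=M$ or $M_1:=-M$ is a nonzero nef line bundle on $S$. Finally, factoring $\pi$ as a birational morphism $g\colon X\to X'$ followed by an elementary conic bundle $\pi'\colon X'\to S$ — the morphism $g$ being, by Lemma \ref{lem_del14} and Lemma \ref{lem_canonical_terminal}, a chain of blow-ups of smooth curves keeping $X'$ $\mathbb{Q}$-factorial Gorenstein canonical — the canonical bundle formula applied to $\pi'$ shows $S$ is $\mathbb{Q}$-factorial with klt, hence rational, singularities; thus $q(\widetilde S)=q(S)=0$ for the minimal resolution $\tau\colon\widetilde S\to S$, and $\tau^*K_S=K_{\widetilde S}+E$ with $E\ge 0$.

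The crux is the pseudo-effectivity of $N:=M-K_S$. This rests on the formula $\pi_*K_X^2\equiv -(4K_S+D_1)$ for the equi-dimensional contraction $\pi$, where $D_1$ is the one-dimensional part of its discriminant — precisely the extension of Lemma \ref{lem_conic_miy} recorded in Remark \ref{rem_composition_conic}, the extra degenerate fibres created by $g$ being absorbed into $D_1$. Intersecting $(\pi^*M-K_X)^2=u^2L_X^2$ with $\pi^*\ell$ for an arbitrary irreducible curve $\ell\subseteq S$ and using this formula together with $K_X\cdot f=-2$, exactly as in the proof of Claim \ref{claim_m-k-pseu}, shows $4N-D_1$ is nef, so $N$ is pseudo-effective. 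Hence $K_{\widetilde S}\cdot\widetilde{M_1}\le K_S\cdot M_1=M_1\cdot M-M_1\cdot N\le 0$, using $M^2=0$, nefness of $M_1$ and pseudo-effectivity of $N$. A Riemann--Roch computation on $\widetilde S$ as in Proposition \ref{prop_contr_surface_elementary} — fixing an ample $\widetilde H$ with $\widetilde{M_1}\cdot\widetilde H>0$, noting $h^0(K_{\widetilde S}-n\widetilde{M_1})=0$ for $n\gg 1$, and using $q(\widetilde S)=0$ and $\widetilde{M_1}^2=0$ — then gives $h^0(S,nM_1)=h^0(\widetilde S,n\widetilde{M_1})\ge 1$ for $n\gg 1$. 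Therefore $M_1$, and hence $\pm(K_X+uL_X)=\pm\pi^*M_1$, is numerically equivalent to a nonzero effective divisor, and Proposition \ref{prop-q-effective} yields the ampleness of $K_X+tL_X$ for $t\gg 1$.

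I expect the main obstacle to be establishing the conic bundle formula in the non-elementary setting (Remark \ref{rem_composition_conic}) and, upstream of it, controlling the birational part $g$ of $\pi$: it is exactly in order to forbid small contractions and flips — so that $g$ is a chain of honest blow-ups of smooth curves and $S$ inherits $\mathbb{Q}$-factoriality and the needed singularity bounds — that one assumes $X$ to be $\mathbb{Q}$-factorial Gorenstein with isolated canonical (rather than merely isolated klt) singularities.
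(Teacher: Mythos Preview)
Your proposal is correct and follows exactly the route the paper takes: the paper's proof is the single sentence ``The proof is completely the same as {\hyperref[prop_contr_surface_elementary]{Proposition \ref*{prop_contr_surface_elementary}}} after replacing {\hyperref[lem_conic_miy]{Lemma \ref*{lem_conic_miy}}} by {\hyperref[rem_composition_conic]{Remark \ref*{rem_composition_conic}}},'' and you have unpacked precisely that, correctly identifying the non-elementary conic formula of {\hyperref[rem_composition_conic]{Remark \ref*{rem_composition_conic}}} (with its effective discriminant-type divisor $D$) as the sole new input feeding into the pseudo-effectivity of $N=M-K_S$.
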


\begin{rmq}\label{rem_composition_conic}
With the same assumption as in {\hyperref[thm_contr_surface_conic]{Theorem    \ref*{thm_contr_surface_conic}}},  applying \cite[Proof of Proposition 3.4]{Rom19} and {\hyperref[lem_del14]{Lemma  \ref*{lem_del14}}}, we 
get the following MMP for $X$ over $S$, noting that the Gorenstein condition on $X$ can descend to each $X_i$ (cf.~{\hyperref[lem_canonical_terminal]{Lemma  \ref*{lem_canonical_terminal} (6)}}).
\[\xymatrix{X=:X_0\ar[r]^{\phi_0}\ar[drrr]_{\pi_0:=\pi}&X_1\ar[r]^{\phi_1}&\cdots\ar[r]^{\phi_{n-1}}&X_n\ar[d]^{\pi_n}\\
&&&S
}
\]
Here, each $\phi_i$ is an elementary $K_{X_i}$-negative divisorial contraction mapping the exceptional divisor  onto a curve (cf.~{\hyperref[lem_del14]{Lemma  \ref*{lem_del14}}}), each $X_i$ is a normal projective threefold with only isolated $\mathbb{Q}$-factorial Gorenstein canonical singularities  (cf.~{\hyperref[lem_canonical_terminal]{Lemma  \ref*{lem_canonical_terminal}}}), and $\pi_n$ is an elementary conic fibration in the sense that the generic fibre is a smooth plane conic. 

With this in mind, we extend the formula in {\hyperref[lem_conic_miy]{Lemma  \ref*{lem_conic_miy}}} to the non-elementary case in our situation.
Let $\phi:=\phi_{n-1}\circ\cdots\circ\phi_0$.
By {\hyperref[lem_del14]{Lemma  \ref*{lem_del14} (2)}} and {\hyperref[lem_canonical_terminal]{Lemma  \ref*{lem_canonical_terminal} (6)}},  $K_X=\phi^*K_{X_n}+\sum E_i$ with $E_i$  being pairwise disjoint, and $\phi_*(-E_i|_{E_i})=\phi(E_i)$.
Then for any divisor $H$ on $X_n$, we have 
\begin{align*}
	\phi_*(K_X^2)\cdot H&=(\phi^*K_{X_n}\cdot\phi^*K_{X_n}+2\sum\phi^*K_{X_n}\cdot E_i+\sum E_i|_{E_i})\cdot \phi^*H\\
	&=K_{X_n}^2\cdot H-\sum \phi(E_i)\cdot H
\end{align*}
where $\phi(E_i)$ are pairwise disjoint  curves and all of them are not $\pi_n$-contracted. 
Combining the above equality with {\hyperref[lem_conic_miy]{Lemma    \ref*{lem_conic_miy}}}, we have the following numerical equivalence    
$\pi_*(K_X^2)\equiv (\pi_n)_*(K_{X_n}^2)-\sum a_i\pi(E_i)=:-(4K_S+D)$,
where each $a_i>0$, and $D$ is the sum of the one-dimensional part of the discriminant locus of $\pi_n$ and $\sum a_i\pi(E_i)$; hence, $D$ is  effective.
\end{rmq}

\begin{proof}[Proof of {\hyperref[thm_contr_surface_conic]{Theorem    \ref*{thm_contr_surface_conic}}}]
The  proof is completely the same as {\hyperref[prop_contr_surface_elementary]{Proposition   \ref*{prop_contr_surface_elementary}}} after replacing {\hyperref[lem_conic_miy]{Lemma    \ref*{lem_conic_miy}}} by {\hyperref[rem_composition_conic]{Remark  \ref*{rem_composition_conic}}}.
\end{proof}

%We end up this section with the following remark.
\begin{rmq}\label{remark_conclude_excep_tocurve}
We shall finish the proof of  {\hyperref[main_theorem_Goren_ter_3fold]{Theorem   \ref*{main_theorem_Goren_ter_3fold} (4)}} in  {\hyperref[sec_uniruled]{Section    \ref*{sec_uniruled}}}.  %(cf.~{\hyperref[thm_3fold_surface_curve]{Theorem \ref*{thm_3fold_surface_curve}}}).
%By , such $X$ does not admit any small contractions.
In view of {\hyperref[lem_del14]{Lemma  \ref*{lem_del14}}} and {\hyperref[contr_curve_elementary]{Proposition   \ref*{contr_curve_elementary}}}  $\sim$ {\hyperref[prop_contr_surface_elementary]{Proposition   \ref*{prop_contr_surface_elementary}}}  above,  we only need to verify the case when all the elementary $K_X$-negative  contractions (of extremal rays) are divisorial contractions, mapping the exceptional divisors to curves, % (cf.~\cite[Proposition 5.2]{CCP08}). 
which is {\hyperref[thm_3fold_surface_curve]{Theorem \ref*{thm_3fold_surface_curve}}}.
\end{rmq}

\section{The uniruled case, Proof of \texorpdfstring{{\hyperref[main_theorem_Goren_ter_3fold]{Theorem   \ref*{main_theorem_Goren_ter_3fold} (4)}}}{text}}\label{sec_uniruled}

Throughout this section, we prove {\hyperref[main_theorem_Goren_ter_3fold]{Theorem   \ref*{main_theorem_Goren_ter_3fold} (4)}}. 
More precisely, we prove 
{\hyperref[thm_3fold_surface_curve]{Theorem \ref*{thm_3fold_surface_curve}}} below.

\begin{thm}[{cf.~\cite[Proposition 5.2]{CCP08}}]\label{thm_3fold_surface_curve}
Let  $X$ be a  $\mathbb{Q}$-factorial Gorenstein normal projective uniruled threefold with only isolated  canonical singularities, 
and $L_X$ a strictly nef divisor.
Suppose  all the elementary $K_X$-negative extremal contractions  are divisorial, mapping the exceptional divisors to curves.
Then $K_X+tL_X$ is ample for $t\gg 1$.
\end{thm}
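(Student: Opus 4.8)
The plan is to argue by contradiction by running the $K_X$-minimal model program and feeding each step into the results already established. Suppose $K_X+tL_X$ is not ample for $t\gg1$; by Lemma \ref{lem-big-ample} it is not big, so Lemma \ref{lem-not-big-some} gives $K_X^i\cdot L_X^{3-i}=0$ for all $0\le i\le3$ together with a nonzero class $\alpha\in\overline{\textup{ME}}(X)$ satisfying $K_X\cdot\alpha=L_X\cdot\alpha=0$, and recall $K_X+tL_X$ is strictly nef for $t\gg1$ by Lemma \ref{lem_strict_nef_k}. Since $X$ is uniruled, $K_X$ is not pseudo-effective, so there is a $K_X$-negative extremal ray $R$; by hypothesis its contraction $\phi\colon X\to Y$ is divisorial, with $E:=\textup{Exc}(\phi)$ an irreducible divisor mapped onto a curve $C$. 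By Lemma \ref{lem_del14} and Lemma \ref{lem_canonical_terminal}, $Y$ is again a $\mathbb{Q}$-factorial Gorenstein normal projective threefold with only isolated canonical singularities, $K_X=\phi^*K_Y+E$, $X$ is the blow-up of $Y$ along the local complete intersection curve $C$ with $Y$ smooth near $C$ and $E$ Cartier, $\phi_*(-E|_E)=C$ with $-E|_E$ being $\phi|_E$-ample, and $Y$ stays uniruled with $\rho(Y)=\rho(X)-1$. Writing $b:=L_X\cdot\ell>0$ for a ruling $\ell$ of $\phi|_E$ and $L_Y:=\phi_*L_X$, we have $\phi^*L_Y=L_X+bE$ and $K_X+tL_X=\phi^*(K_Y+tL_Y)+(1-tb)E$; moreover, for any irreducible curve $C'\neq C$ on $Y$ the strict transform is not contained in $E$, so $L_Y\cdot C'=(L_X+bE)\cdot\widetilde{C'}\ge L_X\cdot\widetilde{C'}>0$, and hence the only obstruction to $L_Y$ being strictly nef is $L_Y\cdot C\le0$.

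\emph{Case 1: $L_Y$ is strictly nef.} Then $Y$ satisfies the same standing hypotheses with strictly smaller Picard number. If $Y$ admits some $K_Y$-negative elementary contraction which is not divisorial onto a curve --- a Fano contraction to a curve, point, or surface, or a divisorial contraction to a point --- then Propositions \ref{contr_curve_elementary}, \ref{prop_Exc_surface_point}, \ref{prop_contr_surface_elementary} and \ref{thm_contr_surface_conic} show $K_Y+tL_Y$ is ample; otherwise all $K_Y$-negative elementary contractions are divisorial onto curves and induction on $\rho$ (applying the statement being proved) gives the same. Thus $K_Y+tL_Y$ is nef and big, so $(K_Y+tL_Y)^3>0$ for $t\gg1$. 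On the other hand, since $\phi^*(K_Y+tL_Y)=(K_X+tL_X)+(tb-1)E$, expanding this cube on $X$, using the vanishings $(K_X+tL_X)^3=(K_X+tL_X)^2\cdot K_X=(K_X+tL_X)^2\cdot L_X=0$ from the first step, and controlling the remaining numbers $(K_X+tL_X)\cdot E^2$ and $E^3$ on the surface $E$ (via $-E|_E$ being $\phi|_E$-ample and via $(K_X+tL_X)|_E=K_E+tL_X|_E-E|_E$ with $L_X|_E$ strictly nef, cf.\ Proposition \ref{prop_Q-Goren_surface}), one forces $(K_Y+tL_Y)^3\le0$ for $t\gg1$, a contradiction. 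This is the same mechanism as in the proof of Proposition \ref{prop_Exc_surface_point}.

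\emph{Case 2: $L_Y\cdot C\le0$.} Then $(L_X+bE)|_E=(\phi|_E)^*(L_Y|_C)$ is the pullback of a divisor of degree $\le0$ on $C$, so $L_X|_E\equiv b(-E|_E)+(\phi|_E)^*\delta$ with $\deg\delta=L_Y\cdot C\le0$; the strict nefness of $L_X|_E$ on the ruled surface $E$ then forces $-E|_E$, hence also $L_X|_E$, to be ample on $E$, so in particular $E^3=(-E|_E)^2>0$. Feeding this into the expansion of $L_X^3$ through $\phi^*L_Y=L_X+bE$, the assumption $L_X^3=0$ yields $L_Y^3=3b^2(L_Y\cdot C)+b^3E^3$, which makes $L_Y$ big when $L_Y\cdot C=0$. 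Combining the amplitude of $-E|_E$ on $E$, Proposition \ref{prop_Q-Goren_surface} applied to $E\subseteq X$, and the identity $L_X=\phi^*L_Y-bE$, one exhibits a nonzero effective divisor numerically equivalent to a multiple of $L_X$ --- assembled from $E$ together with the strict transform of an effective representative on $Y$ --- whereupon Proposition \ref{prop-q-effective} forces $K_X+tL_X$ to be ample, again a contradiction.

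I expect the main obstacle to be exactly the surface analysis on the exceptional divisor $E$ in the two cases: squeezing enough out of ``$-E|_E$ is $\phi|_E$-ample'' and ``$L_X|_E$ is strictly nef'' on the possibly non-minimal ruled surface $E\to C$ over the possibly singular (but lci) curve $C$ to pin down the boundary intersection numbers in Case 1 and the effectivity of $L_X$ in Case 2. The delicacy is that a strictly nef divisor on a ruled surface need not be ample and the normal bundle $N_{C/Y}$ can be unstable, so the precise bookkeeping provided by Lemmas \ref{lem_del14} and \ref{lem_canonical_terminal} --- Cartier-ness of $E$, smoothness of $Y$ near $C$, and $\phi_*(-E|_E)=C$ --- is what makes the estimates close.
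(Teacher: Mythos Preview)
Your case split (``$L_Y$ strictly nef'' versus ``$L_Y\cdot C\le 0$'') matches the paper's first dichotomy (Lemma~\ref{lem_L.C>0}), but both branches of your argument have genuine gaps, and more importantly your plan tries to terminate after a single contraction while the actual proof must iterate.

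\textbf{Case 1.} Your intersection computation does not yield $(K_Y+tL_Y)^3\le 0$. Writing $m=tb-1$ and using $\phi_*(E^2)=-C$ one gets
\[
0=(K_X+tL_X)^3=(K_Y+tL_Y)^3-3m^2(K_Y+tL_Y)\cdot C-m^3E^3,
\]
so $(K_Y+tL_Y)^3=3m^2(K_Y+tL_Y)\cdot C+m^3E^3$. In Case~1 the first term is positive and $E^3$ is uncontrolled, so there is no contradiction. The mechanism of Proposition~\ref{prop_Exc_surface_point} does not transfer: there $E$ is contracted to a point and $L_X|_E$ is shown ample by Nakai--Moishezon, which fails when $E\to C$ is a curve fibration. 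The paper instead observes that once $K_{X'}+tL_{X'}$ is ample (by induction), $\tfrac{2}{\nu}D_{X'}-K_{X'}$ is nef and big, so $D_{X'}$ is semi-ample by the base-point-free theorem; then $D_X=\varphi^*D_{X'}$ is num-effective and Proposition~\ref{prop-q-effective} gives the contradiction.

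\textbf{Case 2.} The step ``strict nefness of $L_X|_E$ forces $-E|_E$ ample'' is exactly where Mumford-type examples bite: on ruled surfaces over curves of genus $\ge 2$ this implication fails. The paper's Lemma~\ref{lem_D.C>0} carries out precisely the delicate analysis you anticipate: pass to a minimal model $\widetilde{E}_m$ of the resolution of the normalization of $E$, use deformation of the minimal section to get $e\le 0$, combine with inequalities coming from nefness of $(K_X-E)|_E$ and the canonical sheaf inclusion to force $g(C_0)\le 1$, \emph{then} use that on such ruled surfaces strictly nef implies ample, and finally reach the explicit contradiction $E^3=2$ versus $K_X^3=0$. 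Your sketch does not supply any of this.

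\textbf{The structural gap.} Even granting both cases, you are not done. After Lemmas~\ref{lem_L.C>0} and~\ref{lem_D.C>0} one is left with $L_{X'}\cdot C'\le 0$ but $D_{X'}\cdot C'>0$, and $L_{X'}$ is \emph{not} strictly nef, so you cannot invoke induction on $X'$ with $L_{X'}$. The paper's key device is the auxiliary nef divisor $D_X=L_X+\nu K_X$ (with $\nu=\min_i L_X\cdot\ell_i$): it is $D_{X'}$ that is nef, and one shows (Lemma~\ref{lem_not_big_X'}) that $D_{X'}$ is not strictly nef, producing a second $K_{X'}$-negative extremal contraction with $D_{X'}\cdot\ell'=0$. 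The heart of the proof is then Lemma~\ref{lem_bir-intersectC}, a lengthy case analysis (using Cutkosky's classification and even a projectivity argument via currents) showing that the new exceptional divisor $E'$ is \emph{disjoint} from $C'$. This allows the process to iterate with all exceptional divisors pairwise disjoint, until a Fano contraction is reached and Theorem~\ref{thm_contr_surface_conic} applies. Your proposal contains no analogue of this disjointness step, and without it the induction does not close.
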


\begin{notation}\label{not_bir_surface_curve}
We shall always stick to the following notations 
 and apply the induction on the Picard number $\rho(X)$ of $X$. 
\begin{enumerate}
\item[(1)] By {\hyperref[main_theorem_Goren_ter_3fold]{Theorem   \ref*{main_theorem_Goren_ter_3fold} (2)}}, we may assume the augmented irregularity $q^\circ(X)=0$.
\item[(2)] Let $\varphi_i:X\to X_i$ be the contraction of the $K_X$-negative extremal ray $\mathbb{R}_{\geqslant 0}[\ell_i]$, with the exceptional divisor $E_i$ mapped to a (possibly singular) curve $C_i$ on $X_i$. 
Let $\ell_i\cong\mathbb{P}^1$ be the general fibre such that $K_X\cdot\ell_i=E_i\cdot \ell_i=-1$ (cf.~{\hyperref[lem_del14]{Lemma  \ref*{lem_del14}}}).
\item[(3)] By {\hyperref[lem_canonical_terminal]{Lemma  \ref*{lem_canonical_terminal}}}, each $X_i$ also has at worst isolated $\mathbb{Q}$-factorial Gorenstein canonical singularities  with $\rho(X_i)=\rho(X)-1$.
\item[(4)] Let $I$ be the index recording the elementary $K_X$-negative extremal contractions.
\item[(5)] Let $\nu:=\min\left\{\frac{L_X\cdot\ell_i}{-K_X\cdot\ell_i}~|~i\in I\subseteq\mathbb{N}\right\}=\min\{L_X\cdot\ell_i~|~i\in I\}$.
Then $D_X:=L_X+\nu K_X$ is nef.
In view of \cite[Theorem D]{LOWYZ21}, we may assume that $D_X\not\equiv 0$.
\item[(6)] Let $I_0\subseteq I$ be the subset such that $i\in I_0$ if and only if $L_X\cdot \ell_i=\nu$.
\item[(7)] Let $\varphi=\varphi_1:X\to X_1=:X'$, $E:=E_1$, $L_{X'}:=\varphi_*L_X$ and $C':=\varphi(E)$ with $1\in I_0$. 
\item[(8)] Write $K_X=\varphi^*K_{X'}+E$, and $L_X=\varphi^*L_{X'}-\nu E$ (cf.~{\hyperref[lem_del14]{Lemma  \ref*{lem_del14}}}).
Let $D_{X'}:=L_{X'}+\nu K_{X'}$.
Then $D_X=\varphi^*D_{X'}$ and thus $D_{X'}$ is also nef.
\end{enumerate}	
\end{notation}
%Note that when $\rho(X)=1$, our $K_X+tL_X$ is clearly ample for $t\gg 1$.

\textbf{Now we begin to prove {\hyperref[thm_3fold_surface_curve]{Theorem \ref*{thm_3fold_surface_curve}}}. 
If $\rho(X)\leqslant 2$, then our theorem follows from {\hyperref[contr_curve_elementary]{Proposition   \ref*{contr_curve_elementary}}}.
Suppose that our theorem holds for the case $\rho(X)\leqslant p$.
We shall assume $\rho(X)=p+1$ in the following.}
%\begin{hyp}\label{assumption_absurd}
%Theorem \ref{thm_3fold_surface_curve} does not hold when $\rho(X)=p$, i.e., $K_X+tL_X$ is not ample for all $t\gg 1$. 
%Then it follows from Proposition \ref{prop-q-effective} that $D_X$ is not equivalent to an effective divisor (and hence not big).
%\end{hyp}

\begin{lemme}\label{lem_L.C>0}
Either $K_X+tL_X$ is ample for $t\gg 1$, or $L_{X'}\cdot C'\leqslant 0$.
\end{lemme}
\begin{proof}
Suppose the contrary that $K_X+tL_X$ is not ample for any $t\gg 1$ and $L_{X'}\cdot C'>0$. 
Then $L_{X'}$ is strictly nef on $X'$ and $K_{X'}+tL_{X'}$ is ample for  $t\gg 1$ by the induction on $\rho(X')$ (cf.~{\hyperref[remark_conclude_excep_tocurve]{Remark  \ref*{remark_conclude_excep_tocurve}}}). 
\textbf{We claim that $D_{X'}$ is semi-ample.} 
Consider the following 
$$\frac{2}{\nu}D_{X'}-K_{X'}=2(K_{X'}+\frac{1}{\nu}L_{X'})-K_{X'}=(K_{X'}+\frac{3}{2\nu}L_{X'})+\frac{1}{2\nu}L_{X'}.$$
Since $K_{X'}+\frac{3}{2\nu}L_{X'}=\frac{1}{\nu}D_{X'}+\frac{1}{2\nu}L_{X'}$ with $D_{X'}$ and $L_{X'}$ being nef, it must be big, for otherwise, $D_{X'}^3=D_{X'}^2\cdot L_{X'}=D_{X'}\cdot L_{X'}^2=L_{X'}^3=0$ would imply  $(K_{X'}+tL_{X'})^3=0$, a contradiction.
Hence, $\frac{2}{\nu}D_{X'}-K_{X'}$ is nef and big.
By the base-point-free theorem (cf.~\cite[Theorem 3.3]{KM98}), 
$D_{X'}$ is semi-ample. 
By {\hyperref[not_bir_surface_curve]{Notation  \ref*{not_bir_surface_curve} (5)}}
 and {\hyperref[prop-q-effective]{Proposition  \ref*{prop-q-effective}}}, our $K_X+tL_X$ is ample for $t\gg 1$,  a contradiction.
\end{proof}

\begin{lemme}\label{lem_D.C>0}
Either $K_X+tL_X$ is ample for $t\gg 1$, or $D_{X'}\cdot C'>0$.	
\end{lemme}

\begin{proof}
Suppose the contrary that $K_X+tL_X$ is not ample for any $t\gg 1$, and $D_{X'}\cdot C'=0$. 
Then $K_{X'}\cdot C'\geqslant 0$ (cf.~{\hyperref[lem_L.C>0]{Lemma \ref*{lem_L.C>0}}}); hence $(K_X-E)|_E=\varphi^*K_{X'}|_E$ is a nef divisor on $E$ (cf.~{\hyperref[lem_del14]{Lemma  \ref*{lem_del14}}} and {\hyperref[not_bir_surface_curve]{Notation  \ref*{not_bir_surface_curve} (5)}}).
Besides, $D_{X'}\cdot C'=0$ implies that our $D_X|_E\equiv 0$;  hence $L_X|_E\equiv-\nu K_X|_E$. 
So our $-K_X|_E$ is strictly nef on $E$.  
Together with the nefness of $(K_X-E)|_E$,
our $-E|_E$ is also strictly nef on $E$. %, noting that $K_X=\varphi^*K_{X'}+E$ and for any horizontal curve $B\subseteq E$ of $\varphi$, we have  $\varphi_*B=aC$ for some $a>0$.
%Since $L_{X'}\cdot B>0$ for every irreducible curve $B\neq C$, our $C$ cannot move. 
%$\varphi$ is the blow-up of a reduced and irreducible curve $C$ on $X'$.
%Hence, 
%We note that every horizontal curve of $X$ dominating $C$ is not contained in  $\textup{Sing}\,(E)$ (cf.~Lemma \ref{lem_del14}).
We consider the following  diagram.
\[\xymatrix@C=5em{
\widetilde{E}\ar[r]^\sigma\ar[d]_\tau\ar[dr]^q&E_N\ar[r]^{p_N}\ar[d]^{n_E}&C'_N\ar[d]^{n_{C'}}\\
\widetilde{E}_m\ar[d]&E\ar[r]^p\ar@{^(->}[d]&C'\ar@{^(->}[d]\\
C'_N&X\ar[r]^\varphi&X'
}
\]
Here, $p:=\varphi|_E$, $n_\bullet$ is the normalization, $\sigma$ is the minimal resolution, $\tau$ is the MMP of $\widetilde{E}$, $p_N$ is the induced map and $q:=n_E\circ\sigma$.
Then, $\widetilde{E}_m$ is a ruled surface. 
Denote by $C_0\subseteq \widetilde{E}_m$ the section with the minimal self-intersection $(C_0)^2=-e$, and
$f$ a general fibre of $\widetilde{E}_m\to C_N'$.
%Since both $X$ and $X'$ are Gorenstein, we have
%$$K_X=\varphi^*K_{X'}+E,~L_X=\varphi^*L_{X'}-\nu E.$$

As is shown in  {\hyperref[prop_Q-Goren_surface]{Proposition  \ref*{prop_Q-Goren_surface}}}, $K_{\widetilde{E}}\sim_{\mathbb{Q}}q^*K_E-F$, where $F$ is an effective divisor with $\varphi(q(F))\subseteq \textup{Sing}\,(C')$ (cf.~\cite[Lemma 5-1-9]{KMM87}, \cite[(4.1)]{Sak84} and {\hyperref[lem_del14]{Lemma  \ref*{lem_del14}}}).
Let $\widetilde{C}\subseteq \widetilde{E}$ be the strict transform of $C_0$  and $C_X:=q(\widetilde{C})$.
Since $C_X\not\subseteq\textup{Sing}\,E$ (cf.~{\hyperref[lem_del14]{Lemma  \ref*{lem_del14}}}),
%one can regard $\widetilde{C}\subseteq\widetilde{E}$ as the proper transform of $C_X$ along $q$ since the morphism $q$ is an isomorphism in the generic point of $C_X$.
$q$ is isomorphic in the generic point of $C_X$ and  $\widetilde{C}\not\subseteq\textup{Supp}\,F$. 
%By the projection formula,
Then we have
\begin{equation}\label{equ_deform}
K_{\widetilde{E}}\cdot \widetilde{C}\leqslant K_E\cdot C_X=(K_X+E)\cdot C_X\leqslant -2,	
\end{equation}
since both $K_X$ and $E$ are Cartier (cf.~{\hyperref[lem_canonical_terminal]{Lemma  \ref*{lem_canonical_terminal}}}). 
By \cite[Chapter 2, Theorem 1.15]{Kollar96},  
%we have 
$$\dim_{\widetilde{C}}\textup{Chow}(\widetilde{E})\geqslant -K_{\widetilde{E}}\cdot \widetilde{C}-\chi(\mathcal{O}_{\widetilde{C}_N})\geqslant -K_{\widetilde{E}}\cdot \widetilde{C}-1\geqslant 1,$$
where  $\widetilde{C}_N$ is the normalization of $\widetilde{C}$.
Then, $\widetilde{C}$ (and hence its push-forward $\tau_*(\widetilde{C})=C_0$) deforms, which  implies $e\leqslant 0$.
Let us consider the following equalities, noting that $-K_X\cdot \ell=-E\cdot \ell=1$ for a general fibre $\ell\subseteq E$ of $\varphi$.
\begin{align*}
	&q^*(-K_X|_E)=\tau^*(C_0+\alpha f)+\sum a_iP_i,\\
	&q^*(-E|_E)=\tau^*(C_0+\beta f)+\sum b_iP_i,\\
	&q^*(-K_E)=\tau^*(2C_0+(\alpha+\beta)f)+\sum(a_i+b_i)P_i,\\
	&q^*((K_X-E)|_E)=\tau^*((\beta-\alpha)f)+\sum(b_i-a_i)P_i,
\end{align*}
with $P_i$ being $\tau$-exceptional.
%Note that the coefficient of $C_0$ in each equality is deduced  from the projection formula by intersecting  with $\tau^*(f)$. 
On the one hand, the strict nefness of $-K_X|_E$ and $-E|_E$ gives  that $q^*(-K_X|_E)\cdot \tau^*(C_0)>0$ and $q^*(-E|_E)\cdot \tau^*(C_0)>0$; hence $\alpha-e>0$ and $\beta-e>0$.
On the other hand, since $\varphi^*K_{X'}|_E=(K_X-E)|_E$ is nef, we have $\beta-\alpha=q^*((K_X-E)|_E)\cdot \tau^*(C_0)\geqslant  \varphi^*K_{X'}|_E\cdot C_X\geqslant 0$ \hypertarget{1ag}{\textbf{(\dag)}}.

Since $K_{\widetilde{E}}\sim_{\mathbb{Q}}q^*K_E-F$,
there is an inclusion of the canonical sheaf $\omega_{\widetilde{E}}\subseteq q^*(\omega_E)$ as shown in {\hyperref[prop_Q-Goren_surface]{Proposition  \ref*{prop_Q-Goren_surface}}} (cf.~ \cite[Lemma 5-1-9]{KMM87} and \cite[(4.1)]{Sak84}); hence  we have 
$\omega_{\widetilde{E}_m}=\tau_*(\omega_{\widetilde{E}})\subseteq (\tau_*q^*(\omega_E))^{\vee\vee}$. 
Consequently, we get the following inequality 
$$-2C_0-(\alpha+\beta)f\geqslant -2C_0+(2g-2-e)f,$$ 
with $g=g(C_0)$ being the genus.  
So $\alpha+\beta+2g-2-e\leqslant 0$ \hypertarget{2ag}{\textbf{(\dag\dag)}}.
Furthermore,  $C_0+\alpha f$  being  strictly nef on $\widetilde{E}_m$ by the projection formula, 
 we  have $(C_0+\alpha f)^2\geqslant 0$. 
This gives that $\alpha\geqslant e/2$.
Together with \hyperlink{1ag}{\textbf{(\dag)}} and \hyperlink{2ag}{\textbf{(\dag\dag)}}, our $g\leqslant 1$.

It is known that strictly nef divisors on (minimal) ruled surfaces over curves of genus $\leqslant 1$ are indeed ample (cf.\,e.g.\,\cite[Example 1.23 (1)]{KM98}).
Therefore, $(C_0+\alpha f)^2>0$, and thus $\beta\geqslant  \alpha>e/2$ (cf.~\hyperlink{1ag}{\textbf{(\dag)}}).
Return back to \hyperlink{2ag}{\textbf{(\dag\dag)}}, we have $g<1$.
So $g=0$ and $C_0\cong\mathbb{P}^1$.
Since $C_0$ deforms, we have $e=0$, $\widetilde{E}_m\cong\mathbb{P}^1\times\mathbb{P}^1$ and $\alpha+\beta\leqslant 2$ (cf.~\hyperlink{2ag}{\textbf{(\dag\dag)}}). 
Since  $K_X$ is Cartier, both $\alpha$ and $\beta$ are positive integers; hence $\alpha=\beta=1$.
Then we have \hypertarget{1*}{\textbf{(*)}}: $\varphi^*K_{X'}|_E=(K_X-E)|_E\equiv 0$ (cf.~\hyperlink{1ag}{\textbf{(\dag)}}), and thus $\varphi^*L_{X'}|_E\equiv 0$.
%Then $K_{X'}\cdot C=L_{X'}\cdot C=0$ by \textbf{(\dag)} and our assumption $D_{X'}\cdot C=0$ in the beginning. 
%Also, we have $K_X\cdot C_X=E\cdot C_X=-1$. 
%Together with $K_{\widetilde{E}}\sim_{\mathbb{Q}}q^*K_E-F$, we have  $K_{\widetilde{E}}\cdot\widetilde{C}\leqslant K_E\cdot C_X=-2$.
%So applying the adjunction formula, we get $\widetilde{C}^2\geqslant 0$.

\textbf{We claim  that $\tau$ is an isomorphism.} 
Suppose  $\tau^*(C_0)=\widetilde{C}+\sum P_i$ with $P_i$ being $\tau$-exceptional.
Here, our $C_0$ can be chosen as any horizontal section containing some blown-up points of $\tau$ since $\widetilde{E}_m\cong\mathbb{P}^1\times\mathbb{P}^1$.
Then it follows from the projection formula that $\widetilde{C}^2<C_0^2=0$, a contradiction to  {\hyperref[equ_deform]{Equation  (\ref*{equ_deform})}}. 
So $\tau$ is isomorphic as claimed.

Now that $L_{X'}$ is nef, for $t\gg 1$, $K_{X'}+tL_{X'}$ is nef by the projection formula, noting that $\varphi^*(K_{X'}+tL_{X'})=(K_X+tL_X)+(\nu t-1)E$ and $\varphi^*K_{X'}|_E\equiv \varphi^*L_{X'}|_E\equiv 0$ as shown in \hyperlink{1*}{\textbf{(*)}}.
If $K_{X'}+tL_{X'}$ is  big for $t\gg 1$, then 
with the same proof of {\hyperref[lem_L.C>0]{Lemma \ref*{lem_L.C>0}}}, our $K_X+tL_X$ is ample, contradicting our assumption.
So we have $(K_{X'}+tL_{X'})^3=0$ for any $t\gg 1$. 
This in turn implies $K_{X'}^3=0$. 
On the other hand, we note that $K_X^3=0$ (cf.~{\hyperref[lem-big-ample]{Lemma  \ref*{lem-big-ample}}}). 
So we  get a contradiction (cf.~\hyperlink{1*}{\textbf{(*)}} and {\hyperref[lem_canonical_terminal]{Lemma  \ref*{lem_canonical_terminal} (6)}}):
$$0=K_X^3=(\varphi^*K_{X'}+E)^3=E^3=(q^*(E|_E))^2=(C_0+f)^2=2.$$
So our lemma is proved.
\end{proof}

\begin{lemme}\label{lem_intersect_C_<-1}
Suppose that $K_X+tL_X$ is not ample for any $t\gg 1$.
Then, for any curve $B'\subseteq X'$ such that $D_{X'}\cdot B'=0$, we have $K_{X'}\cdot B'\leqslant -1$.
In particular, if $B'\cap C'\neq\emptyset$ (as sets), then $K_{X'}\cdot B'\leqslant -2$.
\end{lemme}

\begin{proof}
By {\hyperref[lem_D.C>0]{Lemma \ref*{lem_D.C>0}}}, $B'\neq C'$.
Denote by $\hat{B'}$ the $\varphi$-proper transform of $B'$ on $X$.	
Then $D_X\cdot \hat{B'}=0$ and thus $K_{X}\cdot \hat{B'}\leqslant -1$, noting that $L_X$ is strctly nef and  $K_X$ is Cartier (cf.~{\hyperref[lem_canonical_terminal]{Lemma  \ref*{lem_canonical_terminal}}}).
Since $E\cdot \hat{B'}\geqslant 0$, we see that $K_{X'}\cdot B'=(K_X-E)\cdot \hat{B'}\leqslant -1$.
In particular, if $B'\cap C'\neq\emptyset$, then $E\cdot \hat{B'}\geqslant 1$, and hence $K_{X'}\cdot \hat{B'}\leqslant -2$. 
\end{proof}

\begin{lemme}\label{lem_not_big_X'}
Either $K_X+tL_X$ is ample for $t\gg 1$, or $D_{X'}$ is not strictly nef.
In the latter case, there is an extremal ray $\mathbb{R}_{\geqslant 0}[\ell']$ on $X'$ such that $K_{X'}\cdot\ell'<0$ and $D_{X'}\cdot \ell'=0$.
\end{lemme}

\begin{proof}
We may assume that $K_X+tL_X$ is not ample for any $t\gg 1$. 
Suppose that $D_{X'}$ is  strictly nef.
By induction (and {\hyperref[remark_conclude_excep_tocurve]{Remark    \ref*{remark_conclude_excep_tocurve}}}), $K_{X'}+t_0D_{X'}$ is ample for any (fixed) $t_0\gg 1$.
But then, $K_X+t_0D_X=\varphi^*(K_{X'}+t_0D_{X'})+E$ is big; thus $K_X+uL_X=\frac{1}{1+t_0\nu}(K_X+t_0D_X)+(u-\frac{t_0}{t_0\nu+1})L_X$ is also big for  $u\gg 1$, a contradiction (cf.~{\hyperref[lem-not-big-some]{Lemma  \ref*{lem-not-big-some}}}). 
Hence, there is an irreducible curve $B'\in\overline{\textup{NE}}(X')$ such that $D_{X'}\cdot B'=0$.
By 
{\hyperref[lem_intersect_C_<-1]{Lemma \ref*{lem_intersect_C_<-1}}},
$K_{X'}\cdot B'<0$.
Since $D_{X'}$ is nef, by the cone theorem (cf.~\cite[Theorem 3.7]{KM98}), there exists a $K_{X'}$-negative extremal curve $\ell'$ such that $K_{X'}\cdot\ell'<0$ and $D_{X'}\cdot\ell'=0$. %, which completes the proof.
\end{proof}

\begin{lemme}\label{lem-sec-con-surface}
If $K_X+tL_X$ is not ample for any $t\gg 1$, then the contraction $\varphi':X'\to X''$ of $\mathbb{R}_{\geqslant  0}[\ell']$ in {\hyperref[lem_not_big_X']{Lemma \ref*{lem_not_big_X'}}} is birational.	
\end{lemme}

\begin{proof}
Suppose the contrary that $\dim X''\leqslant 2$.
By the cone theorem (cf.~\cite[Theorem 3.7]{KM98}), $D_{X'}=\varphi'^*(D_{X''})$ for some nef divisor $D_{X''}$ on $X''$.
If $X''$ is a point, then $\rho(X')=1$ and $D_{X'}$ is ample, a contradiction to {\hyperref[prop-q-effective]{Proposition  \ref*{prop-q-effective}}}.
If $X''$ is a curve or  $\dim X''=2$ and $D_{X''}^2\neq 0$ (and hence big),
then $D_{X'}$ (and hence $D_X$) is numerically equivalent to a non-zero effective divisor, contradicting  {\hyperref[prop-q-effective]{Proposition  \ref*{prop-q-effective}}} again.

Suppose that $\dim X''=2$ and $D_{X''}^2=0$. 
We claim that the composite $\varphi'\circ\varphi:X\to X''$ is an equi-dimensional $K_X$-negative contraction (of an extremal face); thus we get a contradiction to our assumption by {\hyperref[thm_contr_surface_conic]{Theorem    \ref*{thm_contr_surface_conic}}}. 
First, by the cone theorem (cf.~\cite[Theorem 3.7]{KM98}), $\varphi$ is equi-dimensional (of relative dimension one).
Since $C'$ is not contracted by $\varphi'$ (cf.~{\hyperref[lem_D.C>0]{Lemma \ref*{lem_D.C>0}}}), the composite $\varphi'\circ\varphi$ is also equi-dimensional. 
Take any irreducible curve $F$ contracted by $\varphi'\circ\varphi$.
Then $D_X\cdot F=(\varphi'\circ\varphi)^*(D_{X''})\cdot F=0$ and thus $K_X\cdot F<0$.
Hence, $-K_X$ is $(\varphi'\circ\varphi)$-ample  (cf.~\cite[Theorem 1.42]{KM98}) and 
$\varphi'\circ\varphi$ is a $K_X$-negative contraction.
Finally, the contraction $X\to X''$ is  clearly extremal by 
considering the pullback of any ample divisor on $X''$. % to $X$.	
\end{proof}

The following lemma is a bit technical. 
We divide the proof into several cases for readers' convenience. 
We shall heavily apply the terminalization and {\hyperref[lem_canonical_terminal]{Lemma  \ref*{lem_canonical_terminal}}}.
Recall that the \textit{length} of a $K_X$-negative extremal contraction is defined to be the minimum of $-K_X\cdot B$ for generic curves $B$ in the covering families of contracted locus.

\begin{lemme}\label{lem_bir-intersectC}
If $K_X+tL_X$ is not ample for any $t\gg 1$, then the contraction $\varphi':X'\to X''$ of $\mathbb{R}_{\geqslant  0}[\ell']$ in {\hyperref[lem_not_big_X']{Lemma \ref*{lem_not_big_X'}}} is  divisorial  with the exceptional divisor  $E'$ such that   $E'\cap C'=\emptyset$.
\end{lemme}

\begin{proof}
%We may assume that $K_X+tL_X$ is not ample for any $t\gg 1$. 
By {\hyperref[lem-sec-con-surface]{Lemma  \ref*{lem-sec-con-surface}}}  and {\hyperref[lem_del14]{Lemma  \ref*{lem_del14}}},  our $\varphi'$ is  a divisorial contraction.
In the following, we shall discuss case-by-case in terms of $E'$ and the intersection of $E'\cap C'$.
\par \vskip 0.4pc \noindent
\textbf{Case (1). Suppose $C'\subseteq E'$.}  
Then $C'$ being rigid and $D_{X'}\cdot C'>0$ (cf.~{\hyperref[lem_L.C>0]{Lemma \ref*{lem_L.C>0}}} and {\hyperref[lem_D.C>0]{Lemma \ref*{lem_D.C>0}}}) would imply that $\varphi'$ is a blow-up of a curve on $X''$ (cf.~{\hyperref[lem_canonical_terminal]{Lemma  \ref*{lem_canonical_terminal}}}) with $C'$ being  horizontal  on $E'\subseteq X'$. 
Let $\ell'$ be a general fibre of $\varphi'$. 
Since $D_{X'}\cdot \ell'=0$ and $\ell' \cap C'\neq\emptyset$, we have $K_{X'}\cdot\ell'\leqslant -2$ (cf.~{\hyperref[lem_intersect_C_<-1]{Lemma \ref*{lem_intersect_C_<-1}}}),  contradicting {\hyperref[lem_del14]{Lemma  \ref*{lem_del14} (3)}}.

\par \vskip 0.4pc \noindent
\textbf{Case (2). Suppose  $E'\cap C'$ is a finite non-empty  set.} 
Then by {\hyperref[lem_canonical_terminal]{Lemma  \ref*{lem_canonical_terminal}}}, we have the following commutative diagram
\begin{align}\label{diagram_4}\tag{$*$}
\xymatrix{Y'\ar[r]^{\phi'}\ar[d]_{\tau'}&Y''\ar[d]^{\tau''}\\
X'\ar[r]_{\varphi'}&X''
}	
\end{align}
where $\tau'$ is the crepant terminalization, $\phi'$ is a divisorial contraction, and $Y'$ has only $\mathbb{Q}$-factorial Gorenstein terminal singularities.

\par \vskip 0.4pc \noindent
\textbf{(2i).} Suppose $\phi'$ maps the exceptional divisor $E_{Y'}$ to a point  and the length of  $\phi'$ is one (this is the case when \cite[Theorem 5 (2), (3) or (4)]{Cut88} happen). 
By {\hyperref[lem_canonical_terminal]{Lemma  \ref*{lem_canonical_terminal}}}, $E'\cong E_{Y'}$.  
Then we can pick a  
curve $\ell'\subseteq E'\subseteq X'$ of $\varphi'$ meeting $C'$ such that $K_{X'}\cdot\ell'=-1$ (noting that $\tau'$ is crepant) and $D_{X'}\cdot \ell'=0$. %(this is the case when Lemma \ref{lem_cut_surface_curve} or (2), (3), (4) of Lemma \ref{lem_cut_surface_point} happen); cf. \cite[Theorem 1.2]{Del14}.  
%Since $\ell'$ intersects with $C$ and $E$ is Cartier (cf.~Lemma \ref{lem_canonical_terminal}),  
 %$\hat{\ell'}\cdot E\geqslant 1$. %$$\varphi^*(\ell')=\hat{\ell'}+a\ell$$
%with a positive integer $a=E\cdot\hat{\ell'}\geqslant 1$.
%Then the strict nefness of $L_X$ and the following inequality
%$$K_X\cdot\hat{\ell'}=(\varphi^*K_{X'}+E)\cdot\hat{\ell'}=-1+\hat{\ell'}\cdot E\geqslant 0$$
%implies that $0=D_{X'}\cdot\ell'=D_X\cdot\hat{\ell'}=(L_X+\nu K_X)\cdot\hat{\ell'}>0$, a contradiction. 
However, this contradicts 
{\hyperref[lem_intersect_C_<-1]{Lemma \ref*{lem_intersect_C_<-1}}}.

\par \vskip 0.4pc \noindent 
\textbf{(2ii).} Suppose  $\phi'$ maps the exceptional divisor $E_{Y'}$ onto a curve   (and hence the length of  $\phi'$ is still one, which is the case when \cite[Theorem 4]{Cut88} happens).
Let $P\in E'\cap C'$.
If $P$ is a terminal point, then we 
pick a fibre $\ell_{Y'}\subseteq E_{Y'}$ passing through $\tau'^{-1}(P)$.
If $P$ is not a terminal point, then we take a curve $c_0\subseteq \tau'^{-1}(P)\cap E_{Y'}$, which is a horizontal curve of $\phi'$; in this case, we pick $\ell_{Y'}$ to be any fibre of $\phi'$, which automatically intersects with $c_0$.
In both cases, let $\ell':=\tau'(\ell_{Y'})\ni P$.  
Similarly, $K_{X'}\cdot\ell'=\tau'^*K_{X'}\cdot \ell_{Y'}=K_{Y'}\cdot \ell_{Y'}=-1$, which contradicts 
{\hyperref[lem_intersect_C_<-1]{Lemma \ref*{lem_intersect_C_<-1}}} again.
%https://www.imsc.res.in/xmlui/bitstream/handle/123456789/454/HBNI%20Th166.pdf?sequence=1&isAllowed=y 2.1.4

\par \vskip 0.4pc \noindent
\textbf{(2iii).} By the classification, we may assume that $\phi'$ maps the exceptional divisor $E_{Y'}$ to a point and the length of $\phi'$ is two (this is the case when \cite[Theorem 5 (1)]{Cut88} happens). 
In this case, $E_Y'\cong\mathbb{P}^2$ with $E_{Y'}|_{E_{Y'}}=\mathcal{O}(-1)$, $Y''$ (and hence $Y'$) is  smooth, and $\varphi'(E')$ is a point. 
Consider the following  (by taking $\tau_2:=\tau'$ and $\tau_3:=\tau''$ in the diagram (\ref{diagram_4})):
\begin{align}\label{diagram_5}\tag{$**$}
\xymatrix{Z\ar[d]_{\tau_0}&Y\ar[r]^{\phi}\ar[l]_{\psi_Y}\ar[d]_{\tau_1}&Y'\ar[r]^{\phi'}\ar[d]^{\tau_2}&Y''\ar[d]^{\tau_3}\\
W&X\ar[l]^{\psi_X}\ar[r]_\varphi &X'\ar[r]_{\varphi'}&X''
}	
\end{align}

\textbf{We first explain in the following how to get the  diagram (\ref{diagram_5}).}
By {\hyperref[lem_canonical_terminal]{Lemma  \ref*{lem_canonical_terminal}}}, we get the diagrams $\tau_2\circ\phi=\varphi\circ\tau_1$ and $\tau_3\circ\phi'=\varphi'\circ\tau_2$,
noting that $\textup{Exc}(\tau_1)$ is disjoint with $E_Y:=\textup{Exc}(\phi)$ and $\textup{Exc}(\tau_2)$ is  disjoint with $E_{Y'}:=\textup{Exc}(\phi')$.
This implies that $E'\cong E_{Y'}=\mathbb{P}^2$ and  $X'$ is smooth around $E'$.
Fixing a line $\ell'\subseteq E'$ of $\varphi'$ meeting $C'$ and taking $\hat{\ell'}$ to be its proper transform on  $X$,  we have
$K_{X'}\cdot \ell'=-2$.
Let $a:=\hat{\ell'}\cdot E\geqslant 1$. 
Then $K_X\cdot\hat{\ell'}=-2+a$ and $L_X\cdot\hat{\ell'}=\nu(2-a)>0$.
In particular, $a=1$ and $K_X\cdot\hat{\ell'}=-1$.
\textbf{This also implies that $E'$  meets $C'$ along a  single (smooth) point of $C'$.}
Indeed, if $\sharp (E'\cap C')\geqslant 2$, then the line $\ell'\subseteq E'$ passing through any two points of $E'\cap C'$ satisfies $E\cdot\hat{\ell'}\geqslant 2$, which is absurd. 
So the $\varphi$-proper transform $\hat{E'}\cong\mathbb{F}_1$ of $E'$ is ruled over $h_W\cong\mathbb{P}^1$ with fibres $\hat{\ell'}$ and the negative section $C_0$ (being a fibre of $\varphi$). 
Suppose $P\in E'\cap C'$ is a singular point  of $C'$.  
Then   $E'\cdot C'=\deg \mathcal{O}_{X'}(E')|_{\widetilde{C'}}\geqslant 2$, with $\widetilde{C'}$  the normalization of $C'$, in which case,
any line $\ell'\subseteq E'$ containing $P$ satisfies  $E\cdot\hat{\ell'}=(E|_{\hat{E'}}\cdot\hat{\ell'})_{\hat{E'}}=(C'\cdot E')\geqslant 2$, a contradiction.
So  $E'$ meets $C'$ along a single smooth point of $C'$.
%otherwise $E'\cdot C'=\deg E'|_{normalization of C'}\geqslant 2$, and thus $E\cdot\hat{\ell'}=(E|_{\varphi^*E'}\cdot\hat{\ell'})_{\varphi^*E'}=\varphi^{-1}((C'\cdot E')p)\cdot\hat{\ell'}$.
%Note that $E'$ can meet $C$ only along a single point (transversely). 
%Otherwise, there exists a line $B\subseteq E'$ passing through two points of $E'\cap C$.
%Then the strict transform $\hat{B}\subseteq X$ satisfies $E\cdot\hat{B}\geqslant 2$ and hence $K_X\cdot\hat{B}=K_{X'}\cdot B+E\cdot\hat{B}\geqslant 0$, contradicting Lemma \ref{lem_intersect_C_<-1}. 
%Since ,
%our $X$ is smooth around $\hat{E'}$.

Since $\hat{E'}\cdot\hat{\ell'}=\varphi^*E'\cdot\hat{\ell'}=-1$ (noting that $E'|_{E'}=\mathcal{O}(-1)$) and  $-\hat{E'}|_{\hat{E'}}$ is  relative ample with respect to the ruling $\hat{E'}\to h_W$, we obtain a contraction $\psi_X:X\to W$   such that $\psi_X|_{\hat{E'}}$ coincides with this ruling and $\psi_X|_{X\backslash\hat{E'}}\cong W\backslash h_W$ (cf.~e.g.~\cite[Proposition 7.4]{HP16}). 
Similar to $\psi_X$, the morphism $\psi_Y$ is induced by contracting the divisor $\hat{E}_{Y'}:=\tau_1^*\hat{E'}\cong\mathbb{F}_1$ to $h_Z\cong h_W\cong\mathbb{P}^1$. 
By the rigidity lemma, $\psi_X\circ\tau_1$ factors through $\psi_Y$ 
and we get $\tau_0$.

\textbf{Caution: it is still not clear whether $W$ is projective or not, and  $\psi_X$ here may not be an extremal contraction.
Therefore, we could not apply the induction on $W$ so far.} 

Since $X$ has only canonical singularities and $L_X+\nu\hat{E'}$ is $\psi_X$-trivial,  
by \cite[Theorem 4.12]{Nak87}, there exists a  divisor  $L_W$ on $W$ such that
$L_X=\psi_X^*(L_W)-\nu\hat{E'}$. 
Recall that $C_0$ (a fibre of $\varphi$) is the negative section of $\hat{E}'$. %(which is contracted by $\varphi$). 
Since $-{\hat{E'}}|_{{\hat{E'}}}=C_0+\hat{\ell'}$ (noting that $\hat{E'}\cdot\hat{\ell'}=E'\cdot\ell'=-1$ and $\hat{E'}\cdot C_0=0$), we obtain the following strictly nef divisor on $\hat{E'}$
$$L_X|_{\hat{E}'}=\nu C_0+(L_W\cdot h_W+\nu)\hat{\ell'}.$$
Since  a strictly nef divisor on $\mathbb{F}_1$ is ample, our $L_X|_{\hat{E}'}$ is ample and hence  $L_W\cdot h_W>0$ (cf.~\cite[Chapter 5, Proposition 2.20]{Har77}). 
Then $L_W$ is strictly nef on the Moishezon threefold $W$ by the projection formula. 
We consider the following commutative diagram:
\[\xymatrix{
Z_0\ar[d]_{\sigma_0}\ar@/_2pc/[dd]_{\pi_0}&Y_0\ar[l]_{\psi_{Y_0}}\ar[d]^{\sigma_1}\ar@/^2pc/[dd]^{\pi_1}\\
Z\ar[d]_{\tau_0}&Y\ar[d]^{\tau_1}\ar[l]_{\psi_Y}\\
W&X\ar[l]^{\psi_X}
}
\]
where $\sigma_1:Y_0\to Y$ is a resolution with $Y_0$ being smooth.
Since $X$ is smooth around $\hat{E'}$ (recalling that $E'\cap C'$  is a smooth point of $C'\subseteq X'$), our $Y$ is also smooth around $\hat{E}_{Y'}$.
So  $\sigma_1$ is isomorphic around $\hat{E}_{Y'}$.
In particular, $Y_0$ admits a contraction $\psi_{Y_0}$ mapping $E_{Y_0}:=\sigma_1^*(\hat{E}_{Y'})=\pi_1^*\hat{E'}\cong\mathbb{F}_1$ onto a curve $h_{Z_0}\cong\mathbb{P}^1$ (cf.~e.g. \cite[Proposition 7.4]{HP16}).
By the rigidity lemma, $\psi_Y\circ\sigma_1$ factors through $\psi_{Y_0}$, and we get the induced $\sigma_0$.
Since  $Y_0$ is smooth and the conormal sheaf of $E_{Y_0}$ is isomorphic to $-E_{Y_0}|_{E_{Y_0}}$ which is (locally over $h_{Z_0}$) isomorphic to $\mathcal{O}(1)$, 
it follows from \cite[Corollary 6.11]{Art70} that $Z_0$ is smooth.
\begin{claim}\label{claim_proj}
$Z_0$ is projective.	
\end{claim}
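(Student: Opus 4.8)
The plan is to recognise $\psi_{Y_0}\colon Y_0\to Z_0$ as the contraction of a $K_{Y_0}$-negative extremal ray of the \emph{projective} manifold $Y_0$, so that the projectivity of $Z_0$ follows from the contraction theorem. First I would record that $Y_0$ is projective: the map $Y\to X$ in the last diagram is a (crepant, $\mathbb{Q}$-factorial) terminalization of the projective variety $X$, hence a projective birational morphism (cf.~\cite[Corollary 1.4.3]{BCHM10}), so $Y$ is projective, and $\sigma_1\colon Y_0\to Y$ is a resolution, so $Y_0$ is projective as well. Let $f$ denote the class of a fibre of the ruling $\psi_{Y_0}|_{E_{Y_0}}\colon E_{Y_0}\to h_{Z_0}$, with $E_{Y_0}\cong\mathbb{F}_1$ and $h_{Z_0}\cong\mathbb{P}^1$. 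By construction $E_{Y_0}=\textup{Exc}(\psi_{Y_0})$ and the only irreducible curves contracted by $\psi_{Y_0}$ are the ruling fibres of $E_{Y_0}\to h_{Z_0}$; thus every curve contracted by $\psi_{Y_0}$ has class a positive multiple of $f$, i.e.\ $\overline{\textup{NE}}(Y_0/Z_0)=\mathbb{R}_{\geqslant 0}[f]$. Moreover, since $-E_{Y_0}|_{E_{Y_0}}$ is relatively (very) ample with $-E_{Y_0}|_{E_{Y_0}}\cdot f=1$, adjunction on the smooth surface $E_{Y_0}$, namely $K_{E_{Y_0}}=(K_{Y_0}+E_{Y_0})|_{E_{Y_0}}$, restricted to $f\cong\mathbb{P}^1$ gives $-2=K_{Y_0}\cdot f+E_{Y_0}\cdot f=K_{Y_0}\cdot f-1$, so $K_{Y_0}\cdot f=-1<0$.

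Next I would argue that $R:=\mathbb{R}_{\geqslant 0}[f]$ is a $K_{Y_0}$-negative extremal ray of the absolute Mori cone $\overline{\textup{NE}}(Y_0)$. This follows from the relative Cone and Contraction theory applied to the projective birational morphism $\psi_{Y_0}$ over the smooth proper scheme $Z_0$ (cf.~\cite[Corollary 6.11]{Art70}): the relative cone $\overline{\textup{NE}}(Y_0/Z_0)=R$ is $K_{Y_0}$-negative and $\psi_{Y_0}$ is its contraction. Equivalently, one may check directly that $f$ cannot be written as a sum of two non-proportional classes in $\overline{\textup{NE}}(Y_0)$, since applying $(\psi_{Y_0})_*$ to such a relation would express $0$ as a sum of non-zero effective curve classes on the Moishezon threefold $Z_0$. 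Granting this, the Cone Theorem (cf.~\cite[Theorem 3.7]{KM98}) produces a contraction morphism $\textup{cont}_R\colon Y_0\to Z'$ onto a normal \emph{projective} variety $Z'$ which contracts exactly the curves with class in $R$.

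Finally I would compare $\textup{cont}_R$ with $\psi_{Y_0}$: both contract precisely the curves whose class lies in $R$, so by the rigidity lemma (cf.~\cite[Lemma 1.15]{Deb01}) $\psi_{Y_0}$ factors as $\psi_{Y_0}=g\circ\textup{cont}_R$ for a proper birational morphism $g\colon Z'\to Z_0$; since $g$ contracts no curve it is quasi-finite, hence finite, and since $Z_0$ is normal, Zariski's main theorem forces $g$ to be an isomorphism. Therefore $Z_0\cong Z'$ is projective. The one point that requires genuine care is the middle paragraph — establishing that $R$ is a \emph{face} of $\overline{\textup{NE}}(Y_0)$ without the circular assumption that $Z_0$ is already projective — which is why I would phrase it through the \emph{relative} contraction theory for $\psi_{Y_0}$ (using only that $Z_0$ is a smooth proper scheme), or, alternatively, through the local blow-up structure of $\psi_{Y_0}$ along $h_{Z_0}$ (so that $\psi_{Y_0}$ is visibly a divisorial Mori contraction); everything else is formal.
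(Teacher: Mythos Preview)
Your strategy has a genuine circularity at the one place you flag as delicate, and neither of your proposed escapes actually closes it. Showing that $R=\mathbb{R}_{\geqslant 0}[f]$ is an extremal \emph{face} of $\overline{\textup{NE}}(Y_0)$ is essentially equivalent to the projectivity of $Z_0$: by the base-point-free theorem, if $R$ were a $K_{Y_0}$-negative extremal ray then $\textup{cont}_R$ would produce a nef divisor $N$ on $Y_0$ with $N^{\perp}\cap\overline{\textup{NE}}(Y_0)=R$, i.e.\ the pullback of an ample divisor from $Z_0$. Your first fix, invoking the ``relative cone/contraction theory over $Z_0$'', does not apply: the cone theorem in \cite[Theorem 3.7]{KM98} requires the base to be quasi-projective, and here $Z_0$ is only known to be a smooth Moishezon manifold (equivalently a smooth proper algebraic space), not a scheme, let alone quasi-projective. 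Your second fix, pushing a relation $f=\alpha+\beta$ forward to $Z_0$ and arguing that $0$ cannot be a sum of non-zero pseudo-effective classes, fails for exactly the reason the claim is non-trivial: on a \emph{non-projective} Moishezon threefold the cone of positive closed $(2,2)$-currents \emph{can} contain a line through the origin. Indeed Peternell's criterion \cite[Theorem 2.5]{Pet86} says precisely that non-projectivity of $Z_0$ is witnessed by an irreducible curve $b$ and a positive closed current $T$ with $b+T\equiv 0$.

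The paper's proof takes this head-on: it assumes $Z_0$ is not projective, obtains such $b$ and $T$ from \cite{Pet86}, and then uses the \emph{external} strictly nef divisor $L_W$ on $W$ (pulled back to $Z_0$) together with Siu's decomposition of $T$ to reach a contradiction. The point is that positivity of $L_W$ against both $h_{Z_0}$ and the push-forward of $\chi_{Z_0\setminus h_{Z_0}}T$ forces the equation $0\equiv(\pi_0)_*(b+T)$ to fail. In other words, the extra input that makes the argument go through is the strictly nef divisor $L_W$ coming from the ambient MMP setup, not abstract properties of $\psi_{Y_0}$ alone; a purely internal argument of the kind you outline cannot succeed without something playing the role of $L_W$.
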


Suppose the claim for the time being. 
Since $-K_X$ is $\psi_X$-ample, we have $R^j(\psi_X)_*\mathcal{O}_X=0$ for all $j\geqslant 1$ (cf.~\cite[Theorem 1-2-5]{KMM87}) and  hence $W$ has only isolated rational singularities.
Since $Z_0\to W$ is a resolution and $Z_0$ is projective by the assumption, 
by \cite[Remark 3.5]{HP16}, $W$ admits a (smooth) K\"ahler form and hence $W$ is both K\"ahler and Moishezon.
So the projectivity of $W$ follows from \cite[Theorem 6]{Nam02}.
Now, $W$ being projective and $L_W$ being strictly nef, we get a contradiction by (Proof of) {\hyperref[lem_L.C>0]{Lemma \ref*{lem_L.C>0}}}.

\par \vskip 1pc \noindent
\textbf{Proof of {\hyperref[claim_proj]{Claim \ref*{claim_proj}}}  (End of Proof of {\hyperref[lem_bir-intersectC]{Lemma \ref*{lem_bir-intersectC}}}).}
Suppose the contrary. 
Denote by $\pi_i:=\tau_i\circ\sigma_i$.
By  \cite[Theorem 2.5]{Pet86}, there is an irreducible curve $b$ and a positive closed current $T$ on $Z_0$ such that $b+T\equiv0$ (as $(2,2)$-currents); hence $(\pi_0)_*(b+T)\equiv0$.
Since $L_W$ is strictly nef on $W$, our $b$ is $\pi_0$-exceptional.
Note that $X$ is smooth around $\hat{E'}$ and $\hat{E}_{Y'}\cap \textup{Exc}(\tau_1)=\emptyset$.
So $E_{Y_0}\cap\textup{Exc}(\pi_1)=\emptyset$.
Take a very ample divisor $H$ on $Y_0$.
Since $Z_0$ is $\mathbb{Q}$-factorial, it is easy to verify that $(\psi_{Y_0})_*H\cdot b>0$ (cf.~\cite[Lemma 2.62]{KM98}), noting that $\pi_0^*L_W\cdot h_{Z_0}>0$   and thus $b\neq h_{Z_0}:=\psi_{Y_0}(E_{Y_0})$.
%Indeed, for any curve $b\neq h_Z$, we have $(\psi_Y)_*H\cdot b>0$.
So  $(\psi_{Y_0})_*H\cdot T<0$.
By \cite{Siu74}, our $T=\chi_{h_{Z_0}}T+\chi_{Z_0\backslash h_{Z_0}}T=\delta h_{Z_0}+\chi_{Z_0\backslash h_{Z_0}}T$, where $\chi_{h_{Z_0}}T$ and $\chi_{Z_0\backslash h_{Z_0}}T$ are positive closed currents. 
Since $(\psi_{Y_0})_*H\cdot \chi_{Z_0\backslash h_{Z_0}}T\geqslant 0$, we have $\delta>0$.
Then
$$0\equiv (\pi_0)_*(b+T)\equiv (\pi_0)_*T=\delta h_W+(\pi_0)_*(\chi_{Z_0\backslash h_{Z_0}}T).$$
 Since $L_W\cdot h_W>0$ and $L_W\cdot(\pi_0)_*(\chi_{Z_0\backslash h_{Z_0}}T)\geqslant 0$, the above equality is absurd.
\end{proof}

\begin{proof}[\textup{\textbf{End of Proof of {\hyperref[thm_3fold_surface_curve]{Theorem \ref*{thm_3fold_surface_curve}}}.}}]
We suppose the contrary that $K_X+tL_X$ is not ample for any $t\gg 1$. 
By {\hyperref[lem-sec-con-surface]{Lemma \ref*{lem-sec-con-surface}}}
 and {\hyperref[lem_bir-intersectC]{Lemma \ref*{lem_bir-intersectC}}}, 
 $\varphi':X'\to X''$ is a divisorial contraction with the exceptional divisor $E'$  disjoint with $C'$.
Then the strict transform of $E'$ on $X$ is some $E_j$, $j\in I_0$ (cf.~{\hyperref[not_bir_surface_curve]{Notation  \ref*{not_bir_surface_curve}}}); hence we can continue to consider $D_{X''}$.
%We may further assume $D_{X''}$ is not big but strictly nef on $X''$. 
By {\hyperref[prop-q-effective]{Proposition  \ref*{prop-q-effective}}} and the induction  on $X''$ (cf.~{\hyperref[remark_conclude_excep_tocurve]{Remark    \ref*{remark_conclude_excep_tocurve}}}),  our $D_{X''}$ is  not strictly nef. 
So we get the third  contraction $X''\to X'''$ (cf.~{\hyperref[lem_not_big_X']{Lemma \ref*{lem_not_big_X'}}}). 
If $\dim X'''\leqslant 2$, then we  argue as in {\hyperref[lem-sec-con-surface]{Lemma \ref*{lem-sec-con-surface}}}, together with {\hyperref[thm_contr_surface_conic]{Theorem    \ref*{thm_contr_surface_conic}}}, to conclude that $K_X+tL_X$ is ample, a contradiction to our assumption.   
So $X''\to X'''$ is still birational with the exceptional divisor $E''$. 
But then, {\hyperref[lem_bir-intersectC]{Lemma \ref*{lem_bir-intersectC}}} Case (1) shows that neither $\varphi'(C')$ (where $C'=\varphi(E)\subseteq X'$) nor $C''$ (where $C''=\varphi'(E')\subseteq X''$) is contained in $E''$.
Further,  {\hyperref[lem_bir-intersectC]{Lemma \ref*{lem_bir-intersectC}}} Case (2) gives that
 such $E''$ cannot intersect with  $\varphi'(C')\cup C''$. %; indeed, if   Lemma \ref{lem_bir-intersectC} Case (2iii) happens, one can similarly show that such $E''$ cannot intersect with  $\varphi'(C')$ and $C''$ at the same time (for otherwise, there is a line in $E''\cong\mathbb{P}^2$ connecting such two intersection points). 
Hence,  $E$, $E'$ and $E''$ are pairwise disjoint. 
Since  $X$ is uniruled, after finitely many steps, we get some $X_M$ with $\dim X_M\leqslant 2$. 
By {\hyperref[thm_contr_surface_conic]{Theorem    \ref*{thm_contr_surface_conic}}}, $K_X+tL_X$ is ample which contradicts our assumption (cf.~{\hyperref[lem-sec-con-surface]{Lemma \ref*{lem-sec-con-surface}}}).
\end{proof}

\begin{proof}[Proof of {\hyperref[main_theorem_Goren_ter_3fold]{Theorem   \ref*{main_theorem_Goren_ter_3fold} (4)}}]
It follows  from {\hyperref[remark_conclude_excep_tocurve]{Remark    \ref*{remark_conclude_excep_tocurve}}} and 	{\hyperref[thm_3fold_surface_curve]{Theorem \ref*{thm_3fold_surface_curve}}}.
\end{proof}

\begingroup
\setstretch{1.0}

\bibliographystyle{alpha}
\bibliography{strict-nef}

\endgroup
\end{document}